\newtheorem{theorem}{Theorem}[section]
\newtheorem{proposition}[theorem]{Proposition}
\newtheorem{corollary}[theorem]{Corollary}
\newtheorem{lemma}[theorem]{Lemma}
\theoremstyle{definition}
\newtheorem{definition}[theorem]{Definition}
\newtheorem{convention}[theorem]{Convention}
\newtheorem{example}[theorem]{Example}
\newtheorem{remark}[theorem]{Remark}
\newtheorem{claim}{Claim}
\newtheorem{problem}[theorem]{Problem}
\numberwithin{equation}{section}
\newcommand{\into}{\hookrightarrow}
\newcommand{\Poly}{{\cal Z}}
\newcommand{\bd}{\vec{\mathbf{d}}}
\newcommand{\Ac}{\mathcal{A}}
\newcommand{\Mc}{\mathcal{M}}
\newcommand{\Oc}{\mathcal{O}}
\newcommand{\iso}{\cong}
\newcommand{\Ab}{\mathbb{A}}
\newcommand{\A}{\mathbb{A}}
\newcommand{\Cb}{\mathbb{C}}
\newcommand{\C}{\mathbb{C}}
\newcommand{\F}{\mathbb{F}}
\newcommand{\Nb}{\mathbb{N}}
\newcommand{\Qb}{\mathbb{Q}}
\newcommand{\Q}{\mathbb{Q}}
\newcommand{\Rb}{\mathbb{R}}
\newcommand{\Zb}{\mathbb{Z}}
\newcommand{\Z}{\mathbb{Z}}
\newcommand{\CP}{\mathbb{CP}}
\newcommand{\Gr}{Gr}
\DeclareMathOperator{\Sym}{Sym}
\DeclareMathOperator{\Hom}{Hom}
\DeclareMathOperator{\gr}{gr}
\DeclareMathOperator{\cd}{cd}
\DeclareMathOperator{\stab}{stab}
\DeclareMathOperator{\Prob}{Prob}
\DeclareMathOperator{\GL}{GL}
\DeclareMathOperator{\HD}{HD}
\DeclareMathOperator{\coor}{coor}
\renewcommand{\O}{\mathcal{O}}
\newcommand{\ra}{\rightarrow}
\newcommand{\tensor}{\otimes}
\newcommand{\para}[1]{\medskip\noindent\textbf{#1.}}
\title{Coincidences between homological densities, \\ predicted by arithmetic}
\author{Benson Farb,  Jesse Wolfson and Melanie Matchett Wood}
\begin{document}

\maketitle
\begin{abstract}
    Motivated by analogies with basic density theorems in analytic number theory, we introduce
    a notion (and variations) of the {\em homological density} of one space in another.  We use Weil's number field/ function field analogy to predict coincidences for limiting homological densities of various sequences $\Poly^{(d_1,\ldots ,d_m)}_n(X)$ of spaces of $0$-cycles on manifolds $X$.  The main theorem in this paper is that these topological predictions, which seem strange from a purely topological viewpoint, are indeed true.

    One obstacle to proving such a theorem is the combinatorial complexity of all possible ``collisions'' of points.  This problem does not arise in the simplest (and classical) case $(m,n)=(1,2)$ of configuration spaces.  To overcome this obstacle we apply the Bj\"orner--Wachs theory of lexicographic shellability from algebraic combinatorics.
   \end{abstract}

\section{Introduction}
The purpose of this paper is to introduce the notion of the ``homological density'' of one space in another, and to prove coincidences for limiting densities for various sequences of spaces of $0$-cycles on manifolds.  We were led to such coincidences by analogy with classical density results in analytic number theory. We do not yet understand why  these topological predictions end up being true.

\para{Spaces of 0-cycles} Let $X$ be a connected, oriented, smooth manifold with $\dim H^*(X;\Q)<\infty$ (this is a standing assumption throughout this paper).  Fix $m,n\geq 1$.  Let $\bd$ denote a tuple of non-negative integers  $(d_1,\ldots,d_m)\in\Zb_{\geq0}^m$, and let $|\bd|:=\sum_i d_i$.  Let $\Sym^d(X):=X^d/S_d$ be the $d^{th}$ symmetric product of $X$;   more generally, let $\Sym^{\bd}(X):=\prod_i \Sym^{d_i}(X)$.  Consider the space
$\Poly^{\bd}_n(X)\subset \Sym^{\bd}(X)$ of sets $D$ of $|\bd|$ (not necessarily distinct) points in $X$ such that:
\begin{enumerate}
\item precisely $d_i$ of the points in $D$ are labeled with the ``color'' $i$, and
\item no point of $X$ is labelled with at least $n$ labels of every color.
\end{enumerate}

\noindent
Such spaces of $0$-cycles include several basic examples in topology and geometry.
For example:
\begin{itemize}
\item $\Poly^d_2(X)$ is the configuration space of unordered $d$-tuples of distinct points in $X$.
\item $\Poly^{\overbrace{(d,\ldots,d)}^m}_1(\C)$ is the space of degree $d$, based rational maps $f:\CP^1\to \CP^{m-1}$ with $f(\infty)=[1:\cdots:1]$.
\end{itemize}
The space $\Poly^{\bd}_n(X)$ is a topological analogue of the set of ``relatively $n$-prime'' $m$-tuples of ideals in the ring of integers in a number field $K$.

\para{Homological densities}
The density of one set in another (e.g. square-free integers in the interval $[1,d]$) is a basic concept in analytic number theory.  Motivated by the framework of the Weil conjectures, we propose the following notion of ``homological density''.

Recall that the {\em Poincar\'{e} polynomial} $P_X(t)\in\Z[t]$ of a space $X$ with
finite-dimensional rational cohomology is defined by
\[P_X(t):=\sum_{i\geq 0} \dim_\Qb H^i(X;\Qb)t^i.\]

\begin{definition}
    Let $Y\subset Z$ be spaces with finite-dimensional rational cohomology. We define the {\em homological density} of $Y$ in $Z$ to be the ratio of Poincar\`e polynomials $\frac{\displaystyle P_Y(t)}{\displaystyle P_Z(t)}$.
\end{definition}

\para{Coincidences of limiting densities}
Results going back to the 19th century imply that the limiting density of the set of relatively $n$-prime $m$-tuples of ideals  in a ring of integers $\Oc_K$, considered within the set of all $m$-tuples of ideals, converges to $\zeta_K(mn)^{-1}$ (see \S\ref{S:NT} below); in particular, the limiting density only depends on the product $mn$ and the number field $K$. By considering this in connection with topological results of Arnol'd \cite{Ar70b}, Segal \cite{Se}, Cohen--Cohen--Mann--Milgram \cite{CCMM}, Vassiliev \cite{Va} and others, we were led to predict that analogous coincidences should hold for limiting homological densities for spaces of $0$-cycles. That these predictions are true is the main result of our paper.

\begin{theorem}[{\bf Coincidences between limiting homological densities}]
\label{theorem:main}

Let $X$ be a connected orientable smooth manifold with $\dim H^*(X;\Q)<\infty$ such that the cup-product of any $k$ compactly supported cohomology classes is $0$.
If $m_1,n_1,m_2,n_2$ are positive integers with $m_1n_1=m_2n_2\geq k$ then
\begin{equation}\label{eq:limit1}
    \lim_{\bd\in \mathbb{N}^{m_1} \rightarrow\infty}
    \frac{\displaystyle P_{\Poly^{\bd}_{n_1}(X)}(t)}{\displaystyle P_{\Sym^{\bd}(X)}(t)} =    \lim_{\bd\in \mathbb{N}^{m_2} \rightarrow\infty}
    \frac{\displaystyle P_{\Poly^{\bd}_{n_2}(X)}(t)}{\displaystyle P_{\Sym^{\bd}(X)}(t)},
\end{equation}
where the limits are in $t$-adic topology on the ring $\Z[[t]]$ of formal power series; in particular these limits exist as all $d_i\rightarrow\infty$, at any rates.
\end{theorem}

In Remark~\ref{remark:why:vanishing} we verify that the hypotheses of Theorem~\ref{theorem:main} hold in the following examples.

\begin{corollary}\label{C:main}
Let $X$ be a connected manifold with $\dim H^*(X;\Q)<\infty$.  The conclusion of Theorem~\ref{theorem:main} holds when: 
\begin{enumerate}
    \item $X$  a smooth affine variety over $\C$ and $k=3$; or
    \item $X$  an open submanifold of $\C^r$ and $k=2$; or
    \item \label{poincare3} $X$  an orientable, noncompact, smooth manifold and $k>\dim X$.
\end{enumerate}
\end{corollary}

    For each fixed $N>0$, Theorem \ref{theorem:main}  asserts that the limiting homological densities are the same for all spaces with $mn = N$.  This gives many coincidences of homological densities, one for each divisor of $N$.  For example when $mn=2$, one sees that the following different kinds of configuration spaces have the same limiting homological densities: 1)the space of indistinguishable particles on X with no two in the same location, and 2) the space of red and blue (otherwise indistinguishable) particles on X where no location has both a red and blue particle.
The main point of this paper is that these coincidences in topology exist, and that one needs to consider homological density to see them.
To prove Theorem~\ref{theorem:main}, we in fact compute the Poincare polynomials explicitly for every $X,\bd,m,n$; see \eqref{E:formula} and Theorem~\ref{C:formula}.


For odd-dimensional manifolds we prove a stronger statement. While extending the coincidences of Theorem \ref{theorem:main}, it also makes clear that, for odd-dimensional manifolds, the limiting homological densities are less interesting.
\begin{theorem}\label{theorem:odd}
    Let $X$ be a connected, oriented, smooth, manifold of dimension $2r+1\ge 3$ with $\dim H^*(X;\Q)<\infty$. Then the inclusion $\Poly^{\bd}_n(X)\into \Sym^{\bd}(X)$ induces an isomorphism on rational cohomology.  In particular the limit \eqref{eq:limit1} exists and equals $1$.
\end{theorem}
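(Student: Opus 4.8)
The plan is to pull everything back to ordered configurations, reduce the statement to the vanishing of a group‑invariant relative cohomology group, and then finish by combining a sign coming from odd‑dimensionality with a simple observation about chains of ``collision patterns.''

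First I would set $N=|\bd|$ and let $G=S_{d_1}\times\cdots\times S_{d_m}$ act on $X^N$ by permuting equally‑colored coordinates, so that $\Sym^{\bd}(X)=X^N/G$ and $\Poly^{\bd}_n(X)=(X^N\setminus\widetilde\Sigma)/G$, where $\widetilde\Sigma=\bigcup_S\Delta_S$ is the union, over tuples $S=(S_1,\dots,S_m)$ with $S_c$ a size‑$n$ subset of the color‑$c$ indices, of the diagonals $\Delta_S=\{x\in X^N : x_i=x_j\text{ for all }i,j\in\bigcup_c S_c\}$. Since $G$ is finite and we work over $\Q$, transfer gives $H^*(\Sym^{\bd}(X);\Q)=H^*(X^N;\Q)^G$ and $H^*(\Poly^{\bd}_n(X);\Q)=H^*(X^N\setminus\widetilde\Sigma;\Q)^G$, compatibly with the inclusion; since $(-)^G$ is exact on $\Q[G]$‑modules, applying it to the long exact sequence of the pair reduces the theorem to the single statement $H^*\big(X^N,\,X^N\setminus\widetilde\Sigma;\,\Q\big)^{G}=0$.

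Next I would invoke a Goresky--MacPherson--type spectral sequence for the subspace arrangement $\widetilde\Sigma$ in the manifold $X^N$ (legitimate because in suitable charts $\widetilde\Sigma$ is a product of a linear diagonal arrangement with a disk): it has a $G$‑equivariant $E_1$‑page
\[ E_1=\bigoplus_{\pi\in\mathcal L\setminus\{\hat 0\}}H^{*-c_\pi}(\Delta_\pi;\Q)\otimes\mathrm{or}_\pi\otimes\widetilde H^{*}\!\big(|\Delta(\hat 0,\pi)|;\Q\big)[\mathrm{shift}] \]
indexed by the intersection poset $\mathcal L$, namely the diagonals $\Delta_\pi$ for set partitions $\pi$ of $\{1,\dots,N\}$ each of whose non‑singleton blocks meets every color in at least $n$ elements. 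Here $\Delta_\pi\cong X^{|\pi|}$, $c_\pi=(N-|\pi|)\dim X$ is its codimension, $\mathrm{or}_\pi$ is the orientation local system of its normal bundle (trivial, since $X$ is oriented, but one on which the stabilizer $G_\pi\le G$ acts through the determinant of its permutation of the normal directions), and $|\Delta(\hat 0,\pi)|$ is the order complex of the open interval below $\pi$. Because $(-)^G$ is exact, whatever the higher differentials are, it is enough to show that every $E_1$‑summand has vanishing $G_\pi$‑invariants.

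The key point is then the odd‑dimensional sign. Fix $\pi\ne\hat 0$ and a non‑singleton block $B$ of $\pi$, and let $I_B=\prod_c S_{B\cap C_c}\le G_\pi$ permute the color‑$c$ indices within $B$. Then $I_B$ fixes $\Delta_\pi$ pointwise, hence acts trivially on $H^{*-c_\pi}(\Delta_\pi;\Q)$; on the line $\mathrm{or}_\pi$ it acts by $\sigma=(\sigma_c)\mapsto\prod_c\mathrm{sgn}(\sigma_c)^{\dim X}=\prod_c\mathrm{sgn}(\sigma_c)$ because $\dim X$ is odd; and, moving only the elements of $B$, it acts on $|\Delta(\hat 0,\pi)|$ through the $B$‑factor of the product decomposition $[\hat 0,\pi]\cong\prod_{B'}\Pi_{B'}^{(n)}$, where $\Pi_{B'}^{(n)}$ is the (bounded) poset of partitions of $B'$ whose non‑singleton blocks meet every color in at least $n$ elements. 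So the $\pi$‑summand has a nonzero $G_\pi$‑invariant only if the nontrivial character $\varepsilon_B:=\bigotimes_c\mathrm{sgn}$ of $I_B$ occurs in $\widetilde H^*$ of the proper part of $\Pi_B^{(n)}$. When $n\ge 2$ this fails for a transparent reason: every element $x$ of the proper part has a non‑singleton block $\beta$ with $|\beta\cap C_c|\ge n\ge 2$ for each $c$, so a transposition $\tau$ of two elements of $\beta$ lying in the same color class lies in $I_B$, fixes $x$, and fixes every $y>x$ as well (each such $y$ contains $\beta$ inside a single block); hence $\tau$ fixes every chain through $x$, so no reduced simplicial chain group of that order complex contains a copy of $\varepsilon_B$ (on which $\tau$ acts by $-1$), and therefore neither does its homology. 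This kills all $E_1$‑invariants, so $H^*(X^N,X^N\setminus\widetilde\Sigma;\Q)^G=0$, proving the theorem for $n\ge 2$.

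The hard part --- the one the abstract points to --- is the combinatorics of the posets $\mathcal L$ and $\Pi_B^{(n)}$ of multicolored partitions with these ``$n$‑full'' blocks. The crude transposition argument does not by itself settle the remaining cases (e.g.\ $n=1$), nor does it give the quantitative control of these cohomology groups --- concentration in top degree, and the precise $I_B$‑module structure there --- that the even‑dimensional Theorem~\ref{theorem:main} requires; both are handled by showing these posets are lexicographically shellable, which I would do by constructing a Bj\"orner--Wachs EL‑ or CL‑labeling, and this is where I expect the real difficulty to lie. It is precisely this phenomenon that is absent in the classical $(m,n)=(1,2)$ configuration‑space case, where $\mathcal L$ is the full partition lattice and the argument above reduces to the identity $\langle\mathrm{Lie}_k,\mathbf 1\rangle=0$.
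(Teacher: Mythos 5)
Your argument is correct (assuming $n\ge 2$) and runs on the same engine as the paper's proof: transfer to $S_D$-invariants, a Goresky--MacPherson/Leray-type decomposition over the poset of colored $n$-equals partitions, and the observation that in odd dimensions the orientation line of the normal bundle to a diagonal carries the sign character, which is killed by a transposition of two same-color elements inside any non-singleton collision block. The packaging differs: the paper first establishes the local statement (Theorem~\ref{theorem:local}, Case~1) by Poincar\'e--Lefschetz duality and transfer through $A_D\subset S_D$ --- observing that $S_2=S_D/A_D$ acts trivially on $\widetilde{R}^D_{n,1}(\Rb^N)/A_D$ because every discriminant point admits a same-color transposition fixing it --- and then propagates this vanishing to all poset summands via Theorem~\ref{theorem:leray} and Lemma~\ref{L:SIinv}; you instead apply the same transposition-plus-sign cancellation termwise to each $E_1$-summand of the arrangement spectral sequence in $X^D$, a mild streamlining that, as you note, needs no shellability for the odd-dimensional theorem. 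Two clarifications. In ``hence $\tau$ fixes every chain through $x$'' you should say that $x$ is taken to be the \emph{minimum} of the chain: your $\tau$ fixes every $y\ge x$ but can move elements below $x$, so $\tau$ must be chosen per chain, and the orbit/Frobenius-reciprocity argument you sketch then closes the gap. Your caveat about $n=1$ is genuine but equally affects the paper: the step in Theorem~\ref{theorem:local}, Case~1 asserting that ``any $x\in\widetilde{R}^D_{n,1}(\Rb^N)$ has at least two coordinates of each color being equal'' likewise requires $n\ge 2$, and Theorem~\ref{theorem:odd} is in fact false as stated for $(m,n)=(2,1)$ since $\Poly^{(1,1)}_1(\Rb^3)\simeq S^2$ is not rationally acyclic while $\Sym^{(1,1)}(\Rb^3)=\Rb^6$ is.
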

Theorem \ref{theorem:odd} appears as Statement \ref{cohoodd} of
Theorem \ref{theorem:coho} below.
Theorem \ref{theorem:odd} extends previous results of Felix--Tanr\`e\footnote{See also Bodigheimer--Cohen--Taylor \cite{BCT}.} \cite[Theorem 4]{FTa} who proved
Theorem \ref{theorem:odd} for configuration spaces, i.e. the case $(m,n)=(1,2)$.

The following is an illustration in a simple case of the content of Theorem~\ref{theorem:main}.

\begin{example}
\label{example:xminus0}
Consider the case when $X=\C^\times$ and $mn=2$.  A variation on theorems of Arnol'd and Segal (using Theorem~\ref{theorem:coho} below) gives:
\begin{equation}
\label{eq:poincare1}
\lim_{d\rightarrow\infty}P_{\Poly^d_2(\C^\times)}(t)=1+2t+2t^2+2t^3+\cdots
\end{equation}
and
\begin{equation}
\label{eq:poincare2}
\lim_{d\rightarrow\infty}P_{\Poly^{(d,d)}_1(\C^\times)}(t)=1+3t+4t^2+4t^3+\cdots
\end{equation}

An easy computation gives $P_{\Sym^d(\C^\times)}(t)=1+t$ for $d\geq 2$.  So while \eqref{eq:poincare1} and \eqref{eq:poincare2} are not equal, we find  that:

\[\lim_{d\rightarrow\infty}\frac{\displaystyle P_{\Poly^{d}_2(\C^\times)}(t)}{\displaystyle P_{\Sym^{d}(\C^\times)}(t)}=\frac{\displaystyle 1+2t+2t^2+2t^3\cdots}{\displaystyle 1+t}=1+t+t^2+t^3+\cdots\]
and
\[\lim_{d\rightarrow\infty}\frac{\displaystyle P_{\Poly^{(d,d)}_1(\C^\times)}(t)}{\displaystyle (P_{\Sym^d(\C^\times)}(t))^2}=\frac{\displaystyle 1+3t+4t^2+4t^3\cdots}{\displaystyle (1+t)^2}=1+t+t^2+t^3+\cdots\]
are equal.  This illustrates why one must take a quotient in Theorem~\ref{theorem:main}.  We remark that $\Poly^{(d,d)}_1(X)$ in this example can be replaced by $\Poly^{(d_1,d_2)}_1(X)$ for any $(d_1,d_2)\to\infty$.
\end{example}

\begin{remark}\label{R:CE}\mbox{}
The assumption that $mn>2$ when $X$ is an affine variety is sharp (and similarly in  \eqref{poincare3} of Corollary~\ref{C:main}): let $X=T^2-*$ be the punctured torus. For $mn=2$, one can compute using Theorem~\ref{theorem:coho} below that:
            \begin{align*}
                \lim_{d\to\infty}\frac{\displaystyle P_{\Poly^d_2(T^2-*)}(t)}{\displaystyle P_{\Sym^d(T^2-*)}(t)} &= 1+3t^2-t^3+\cdots\intertext{while}
                \lim_{d\to\infty}\frac{\displaystyle P_{\Poly^{d,d}_1(T^2-*)}(t)}{\displaystyle (P_{\Sym^d(T^2-*)}(t))^2}&=1+t^2+5t^3+\cdots
            \end{align*}
\end{remark}

\para{Homological Stability}
We deduce the existence of the limits in Theorem \ref{theorem:main} from the following.  For an $m$-tuple $\bd=(d_1,\ldots,d_m)$, define $\bd+1_i:=(d_1,\ldots,d_i+1,\ldots,d_m)$.

\begin{theorem}[{\bf Rational homological stability for spaces of 0-cycles}]\label{theorem:stable}
    Let $X$ be a smooth, orientable, connected manifold with $\dim(X)\geq 2$. For each $1\leq i\leq m$, there exists a natural (in $X$) map
    \begin{equation*}
        H^\ast(\Poly^{\bd}_n(X);\Qb)\to H^\ast(\Poly^{\bd+1_i}_n(X);\Qb)
    \end{equation*}
    that is an isomorphism for $*\leq d_i$ when either $r\geq 2$, $m\geq 2$ or $n\geq 3$, and $*\leq d_i/2$ when $(r,m,n)=(1,1,2)$, this last case being the case of configuration spaces on surfaces.
\end{theorem}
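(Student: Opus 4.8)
The plan is to construct the stabilization map geometrically by "adding a point of color $i$ near a fixed basepoint at infinity," and then to prove the isomorphism range via a scanning / group-completion style argument, bootstrapped from the known case of configuration spaces. First I would reduce to the case where $X$ is the interior of a compact manifold with nonempty boundary (or at least has an end), so that we may fix an embedded disk $D\subset X$ together with a smaller disk $D'\subset D$; the stabilization map $H^\ast(\Poly^{\bd}_n(X))\to H^\ast(\Poly^{\bd+1_i}_n(X))$ should be induced (on homology) by a map $\Poly^{\bd}_n(X\setminus D')\times \{p\}\to \Poly^{\bd+1_i}_n(X)$ that glues a single color-$i$ point $p\in D'$ onto a cycle supported in $X\setminus D'$, composed with the homotopy equivalence $\Poly^{\bd}_n(X\setminus D')\simeq \Poly^{\bd}_n(X)$ coming from an isotopy pushing $X$ off of $D'$. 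Naturality in $X$ follows because all of this is built from embedded disks, which are natural under embeddings. Passing to cohomology reverses the arrow, giving the asserted map.

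Next I would set up the main topological input: a scanning map comparing $\Poly^{\bd}_n(X)$ to a section space (a "labelled configuration space with summable labels" in the sense of Segal, Bödigheimer, and McDuff). Concretely, one stratifies $\Sym^{\bd}(X)$ by collision type and observes that the local model at a point carrying a multi-colored collision is governed by the combinatorics of the poset of "forbidden" labelings — this is precisely where the hypothesis $n$ enters, and where (as the abstract warns) the combinatorial complexity of collisions is serious. I would use the earlier Theorem \ref{theorem:coho} to identify the stable cohomology with that of an explicit section space $\Gamma$, and reduce homological stability for $\Poly^{\bd}_n(X)$ to homological stability for that section space, where it follows from a standard Serre spectral sequence / scanning argument once one knows the fiber is highly connected. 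Tracking the connectivity of the fiber as a function of $\bd$ gives the numerical range $\ast\le d_i$.

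The exceptional case $(r,m,n)=(1,1,2)$ — configuration spaces of surfaces — I would simply cite: here $\Poly^{d}_2(\Sigma)=\UConf_d(\Sigma)$, and the slope-$1/2$ stability range $\ast\le d/2$ is the theorem of Church (and Randal-Williams, Bendersky–Miller), which is known to be essentially sharp, so no new argument is needed. The point of isolating it is that for surfaces the single-color $n=2$ scanning fiber is only $\lfloor d/2\rfloor$-connected rather than $d$-connected, whereas in every other case ($r\ge 2$, or $m\ge 2$, or $n\ge 3$) the extra room — either from the codimension of the diagonal being $\ge 4$, or from having several colors whose collisions must all pile up, or from needing $n\ge 3$ coincidences — pushes the connectivity of the fiber up to $d_i$.

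I expect the main obstacle to be precisely the bookkeeping of multi-colored collisions: unlike the classical $(m,n)=(1,2)$ configuration-space case, the link of a stratum in $\Sym^{\bd}(X)$ is the order complex of a nontrivial subposet of a product of partition-type lattices, and one must show this link is as highly connected as claimed uniformly in $\bd$. This is exactly the role of Björner–Wachs lexicographic shellability advertised in the abstract: I would exhibit an EL-labelling (or CL-labelling) of the relevant poset of admissible collision patterns, deduce that its order complex is shellable and hence homotopy equivalent to a wedge of spheres in the top dimension, and read off the connectivity bound from the dimension count. Everything else — the isotopy producing the stabilization map, naturality, and the Serre spectral sequence comparison — is formal once this connectivity estimate is in hand.
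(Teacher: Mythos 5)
Your proposal takes a genuinely different route from the paper, and it has several concrete gaps. The paper's argument is much shorter precisely because Theorem~\ref{theorem:coho} is already available: the stabilization map is defined by pulling back along the ``forget the last point of color $i$'' map $\widetilde{\Poly}^{\bd+1_i}_n(X)\to\widetilde{\Poly}^{\bd}_n(X)$ on \emph{ordered} spaces (a map which exists on any $X$, and which goes the correct contravariant direction to produce $H^*(\Poly^{\bd}_n)\to H^*(\Poly^{\bd+1_i}_n)$ after transfer), and the induced map on Leray $E_2$ pages is identified, via the explicit formula of Theorem~\ref{theorem:coho}, with the inclusion $\Sym^{d_i-nq/(2r(mn-1)-1)}_{gr}H^*(X;\Qb[0])\hookrightarrow\Sym^{d_i+1-nq/(2r(mn-1)-1)}_{gr}H^*(X;\Qb[0])$ tensored with identities on the remaining factors. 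The stated stable range then falls out of classical rational homological stability for symmetric powers applied to that one factor. No scanning, section spaces, or fiber-connectivity estimates are needed; the shellability has already done its work inside Theorem~\ref{theorem:coho}, which you are in any case invoking. A scanning comparison for these spaces is heavy machinery that would essentially redo, in a different guise, the very collision combinatorics the paper has packaged into the $E_2$ formula.

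Three concrete gaps in your write-up. (i) The ``bring in a point from infinity'' map only exists when $X$ has an end, but Theorem~\ref{theorem:stable} is stated for all smooth orientable $X$ of dimension $\geq 2$, including closed manifolds; your proposed reduction to the noncompact case is not justified and is not available in general. (ii) Your map of spaces $\Poly^{\bd}_n(X)\to\Poly^{\bd+1_i}_n(X)$ pulls back on cohomology as $H^*(\Poly^{\bd+1_i}_n)\to H^*(\Poly^{\bd}_n)$, the opposite of the asserted direction; ``passing to cohomology reverses the arrow, giving the asserted map'' is a sign error (it can be repaired by dualizing with $\Qb$-coefficients, but as written the construction does not produce the claimed map). (iii) Your connectivity heuristics do not reproduce the stated ranges. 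For $(r,m,n)=(1,2,1)$ the big diagonal in $X^{mn}$ has codimension $2$, exactly as for $(1,1,2)$, yet this case lies in the \emph{better} range $*\leq d_i$. The distinction between $*\leq d_i$ and $*\leq d_i/2$ comes from the factor of $n$ in the exponent $d_i - nq/(2r(mn-1)-1)$ on the $E_2$ page, not from a codimension or pile-up count; without tracking that exponent precisely, the proposal cannot deliver the quantitative statement.
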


In light of Theorem \ref{theorem:odd}, the content of Theorem \ref{theorem:stable} is for $\dim(X)$ even.

\begin{remark}\mbox{}

\begin{enumerate}
\item For $X=\Cb^n$, the special case $\bd=(d,\ldots,d)$ of Theorem \ref{theorem:stable} is a consequence of Theorem 1.2(2) of \cite{FW}.  For $X=\Cb^n$ and general $\bd$, Theorem \ref{theorem:stable} was first proven by Gadish \cite[Theorem 6.13]{Ga} as a special case of his theory of finite generation for families of linear subspace arrangements.

\item For general $X$, Theorem \ref{theorem:stable} gives a simultaneous generalization of stability for configuration spaces  \cite{Ch},\cite{RW}, \cite{BM}, \cite{Kn} (the $(m,n)=(1,2)$ case, proved only recently), for bounded symmetric powers  \cite{Va,KM} (the $(m,n)=(1,n),n>2$ case), and for spaces of rational maps \cite{Se} (the $(m,n)=(m,1),m\geq 2$ case).

\item Arabia \cite{Arabia} has considered a different generalization of (ordered) configuration spaces than the kind we consider here, and defined a notion of \emph{i-acyclic}, which is equivalent to requiring that the cup product of any $2$ compactly supported cohomology classes is $0$.
For \emph{i-acyclic} spaces, he gives formulas for the Poincar\'{e} polynomials of his generalized ordered configuration spaces in terms of the Poincar\'{e} polynomial of $X$, in the same spirit as the formulas we give in \eqref{E:formula} and Theorem~\ref{C:formula}.

\item The main difference between Theorem \ref{theorem:stable} and these previously studied cases is the greater complexity of the allowed collisions of the particles.  Regardless of which topological method one uses to compute cohomology (we use the classical Leray spectral sequence), one needs to keep track of this combinatorics, and to interface this information with the topological tools used.  This is the bulk of the proof of Theorem \ref{theorem:main}.  A crucial ingredient is the the Bj\"{o}rner-Wachs theory of lexicographic shellability.

\item The shellability point-of-view shows why the case of configuration spaces is particularly simple: the associated partition lattice is a pure poset.

\end{enumerate}
\end{remark}

\para{Two variants}
We prove Theorem \ref{theorem:main} as an application of an explicit computation (Theorem~\ref{theorem:coho}) of the $E_2$-page and first non-trivial differential of the Leray spectral sequence for the sheaf $\Qb$ and the inclusion $\Poly^{\bd}_n(X)\to\Sym^{\bd}(X)$. The failure of this differential to vanish is responsible for the failure of the coincidence in the example $T^2-*$ and $mn=2$ above. We now state two variants of Theorem \ref{theorem:main} that bypass this differential.

Note that $P_X(-1)=\chi(X)$ for any space $X$. Further, recall that when $X$ is also an algebraic variety, it comes equipped with a mixed Hodge structure, giving {\em Hodge-Deligne numbers} $h^{p,q}(X)\geq 0$ (see \S\ref{section:mixedhodge} below).  These numbers can be concisely packaged into the {\em Hodge-Deligne polynomial} \footnote{This is slightly nonstandard usage: the Hodge-Deligne polynomial is more commonly defined using compactly supported cohomology.} :
\[\HD_X(u,v):=\sum_{p,q\geq 0}h^{p,q}(X)u^pv^q \in \Z[u,v]\]
 and if $X$ is a smooth projective variety then $\HD_X(t,t)=P_X(t)$.

\begin{theorem}
\label{theorem:second}
Fix $m,n\geq 1$  and let $\bd=(d_1,\ldots ,d_m)$, and let $\lim_{\bd\rightarrow\infty}$  mean ``as all $d_i\rightarrow\infty$'', at any rates.
\begin{enumerate}
    \item For $X$ a connected, oriented, smooth, even-dimensional manifold with $\dim H^*(X;\Q)<\infty$,\footnote{For example, $X$ is compact or is the interior of a compact manifold with boundary.}
        \[
            \frac{ \sum_{\bd\in \Zb_{\geq0}^m} \chi(\Poly^{\bd}_n(X)) x^{|\bd|}   }{
            \sum_{\bd\in \Zb_{\geq0}^m} \chi(\Sym^{\bd}(X)) x^{|\bd|}
            }=(1-x^{mn})^{\chi(X)}.
        \]
    In particular, this only depends on the product $mn$ and on $\chi(X)$.
\item For $X$ a connected, smooth complex-algebraic variety, the limit
   \begin{equation}\label{E:HDlimit}
    \lim_{\bd\rightarrow\infty}\frac{\HD_{\Poly^{\bd}_n(X)}(u,v)}{\HD_{\Sym^{\bd}(X)}(u,v)}
\end{equation}
   exists in the adic topology on $\Z[[u,v]]$, and depends only on the product $mn$, the mixed Hodge structure on $H^*(X;\Q)$, and $\dim X$.
   \end{enumerate}
\end{theorem}

Theorem \ref{theorem:second} avoids the assumptions of Theorem \ref{theorem:main} because the Euler characteristic and the Hodge--Deligne polynomial do not distinguish between the $E_2$ and $E_\infty$-pages in a spectral sequence. It would be interesting to extract appropriate ``correction terms'' from the differentials so that the (corrected) limiting homological densities coincide in general.
Getzler \cite{Getzler} has given generating function formulas for the Hodge--Deligne polynomials of ordered configuration spaces, and as a corollary one has that the limit \eqref{E:HDlimit} in the case $m=1$ is as predicted by the analogy with arithmetic.
The second part of Theorem \ref{theorem:second} would also follow from an analogous statement in the Grothendieck ring of varieties.   We conjecture
the limit \eqref{E:HDlimit} lifted to the  Grothendieck ring of varieties is $\zeta_X([\A^1]^{-mn})^{-1}$, where  $\zeta_X$ is Kapranov's motivic zeta function.
The $m=1$ case of this conjecture, as well as similar results for other generalizations of configuration spaces is proven in \cite[Theorem 1.30]{VakilWood}.

\para{How arithmetic predicts the coincidences in Theorem~\ref{theorem:main}}  We originally conjectured the coincidences of Theorem~\ref{theorem:main} by analogy with density results in arithmetic.  As a simple example, consider the following heuristic for the density of the set of square-free integers among the set of all integers:
\begin{flalign*}
\lim_{d\rightarrow\infty} \frac{\#\{n\in [1,d]:n\neq p^2\}}{\#\{n\in[1,d]\}}& =\lim_{d\rightarrow\infty}\prod_{p \text{\ prime}\le d}\Prob(p^2\nmid n)\\
&=\prod_{p \text{\ prime}}(1-\frac{1}{p^2})=\zeta(2)^{-1}
\end{flalign*}
where $\zeta(s):=\sum_{n=1}^\infty\frac{1}{n^s}$ is the Riemann zeta function.   A heuristic for the density of pairs of relatively prime integers among all pairs of integers is given by:
\begin{flalign*}
\lim_{d\rightarrow\infty} \frac{\#\{(m,n)\in [1,d]^2:\gcd(m,n)=1\}}{\#\{(m,n)\in[1,d]^2\}}&=\lim_{d\rightarrow\infty}\prod_{p \text{\ prime}\le d}[1-\Prob(p|m\text{\ and\ }p|n)]\\
& =\prod_{p \text{\ prime}}(1-\frac{1}{p^2})=\zeta(2)^{-1}
\end{flalign*}

Both heuristics above are accurate: it has been known since the 1800s \cite{Gegenbauer, Mertens} that each density is indeed $\zeta(2)^{-1}$.  This is a well-known coincidence.  Note that in the two limits we divide by $d$ and $d^2$, respectively, corresponding to the cardinality of the ``background spaces''
$[1,d]$ and $[1,d]^2$.

Weil espoused a powerful analogy between number fields and function fields (over $\Cb$ and over finite fields).  This analogy gives in particular the following correspondences:

\begin{center}
\begin{tabular}{|c|c|}
\hline
Number field \qquad\quad& Function field \\
\hline
$[1,d]$ & ${\rm Pol}_d:=\{\text{monic\ } f\in\C[t]: \deg(f)=d\} \iso \Sym^d(\Cb)$\\
\hline
$\{\text{square-free\ }n\in [1,d]\}$
& $\{\text{square-free\ }f\in{\rm Pol}_d\}\iso \Poly^d_2(\Cb)$\\
\hline
$\{(m,n)\in [1,d]^2:\gcd(m,n)=1\}$& $\{(f_0,f_1)\in{\rm Pol}_d^2:\gcd(f_0,f_1)=1\}\iso \Poly^{d,d}_1(\Cb)$\\
\hline
\# & (co)homology\\
\hline
\end{tabular}
\end{center}

Taking this analogy seriously, from the two examples above one might guess two things: first, that $\lim_{d\rightarrow\infty}H_\ast(\Poly^d_2(\C);\Z)$ and $\lim_{d\rightarrow\infty}H_\ast(\Poly^{d,d}_1(\C);\Z)$ exist; and second, that these limits are equal.  This is true: in two highly influential papers, Arnol'd \cite{Ar} and Segal \cite{Se} proved existence of these limits, and showed that they each equal $H_\ast(\Omega_0^2\CP^1;\Z)$, the basepoint component of the second loop space of the 2-sphere.  \footnote{There are many other such coincidences.  For example, generalizing the above, for any fixed $n\geq 2$ the density of ``$n$-power-free integers'' among all integers equals $\zeta(n)^{-1}$, which also happens to be the density of $n$-tuples of integers with common gcd $1$ among all $n$-tuples of integers. See Section~\ref{S:NT} for more details on the analogy in number theory. The topological analogs were proved by Vassiliev \cite{Va} and Segal \cite{Se}, with limiting homology that of $\Omega_0^2\CP^{n-1}$.}

One might try to push the analogy further, replacing $\C$ with other open manifolds $X$. However, as we see from Example~\ref{example:xminus0} for $X=\C^\times$:
\[\lim_{d\rightarrow\infty}H_i(\Poly^d_2(\C^\times);\Q)\neq \lim_{d\rightarrow\infty}H_i(\Poly^{d,d}_1(\C^\times);\Q).\]

What went wrong? The answer lies in the fact that we didn't take Weil's analogy seriously enough: we need to somehow ``divide'' by the spaces corresponding to $[1,d]$ and $[1,d]^2$, namely $\Sym^d(X)$ and $\Sym^{d}(X)^2$.  As indicated by Theorem~\ref{theorem:main}, interpreting this division as division of Poincar\'{e} polynomials gives a correct theorem in many examples.  Note that the necessity of dividing was not visible in the example when $X=\C$ since, by Newton's Theorem, $\Sym^d(\C)\iso \C^d$, and so $P_{\Sym^d(\C)}(t)=1=(P_{\Sym^d(\C)}(t))^2$.

One can ask what exactly about cohomology should arise in the analogy above.    The idea that the function field analog of counting is an Euler characteristic or Hodge-Deligne polynomial is
suggested by the Grothendieck-Lefschetz trace formula.  This is a well-understood analogy, and via this analogy Theorem~\ref{theorem:second} is predicted.
In \cite{VakilWood}, based on theorems about Hodge-Deligne polynomials motivated by arithmetic,  Vakil and the third author posed many questions about about actual Betti numbers, asking how far this analogy might extend to topology and to what extent it can predict not just Euler characteristics but Betti numbers.  One point of this paper is that these analogies from arithmetic can be extended to topology beyond just Euler characteristics, as seen in Theorem~\ref{theorem:main}, but this extension is more mysterious than the well-understood analogy with Euler characteristics or Hodge-Deligne polynomials, as seen by Remark~\ref{R:CE}.

\para{Outline of the proof of Theorem \ref{theorem:main}}
\label{outline}
We deduce Theorems \ref{theorem:main} and \ref{theorem:odd} from an explicit description of the $E_2$-page of the Leray spectral sequence for the inclusion  $\Poly^{\bd}_n(X)\subset \Sym^{\bd}(X)$; this description is the content of Theorem \ref{theorem:coho}. The proof of Theorem \ref{theorem:coho} is quite involved and  takes up Sections~\ref{section:nequals}-\ref{section:final}.  In outline,  the proof of Theorem \ref{theorem:coho} proceeds as follows.

\begin{enumerate}
\item We start by considering an ordered version $\widetilde{\Poly}^{D}_n(X)$ of $\Poly^{\bd}_n(X)$, defined for any ``$m$-colored'' set $D$, on which a product of symmetric groups $S_D\cong S_{d_1}\times\cdots S_{d_m}$ acts with quotient $\Poly^{\bd}_n(X)$.    Our first goal is to analyze $H^*(\widetilde{\Poly}^{D}_n(X);\Q)$ by using the Leray spectral sequence for the inclusion  $\pi: \widetilde{\Poly}^D_n(X)\ra X^D\cong X^{|\bd|}$.

\item The $E_2$-page is given by $H^p(X^D;R^q\pi_\ast\Zb)$.  We must therefore understand the coefficient sheaves $R^q\pi_\ast\Zb$. In \S\ref{section:0cycles}, we reduce this, using the Goresky-MacPherson formula, to a combinatorial problem expressed in terms of the homology of order complexes associated to certain posets of so-called ``colored $n$-equals partitions''. In contrast to configuration spaces (the case $(m,n)=(1,2)$), the possibility of  particle collisions leads to much greater combinatorial complexity of the relevant partition lattices.

\item To handle this complexity, we make critical use of Bj\"orner--Wachs' theory of ``lexicographic shellability'' \cite{BW,BW2}.   This theory gives a method for proving that certain combinatorially-defined complexes are homotopy equivalent to a wedge of spheres, and we show in Section \ref{section:nequals} that it holds for the order complexes of posets of colored $n$-equals partitions. We first use this to give a {\em qualitative} description of the $E_2$-page of the ordered case in terms of sheaves supported on diagonals in $X^{|\bd|}$ (Theorem \ref{theorem:leray}).

\item The next ingredient for the computation of $H^*(\Poly^{\bd}_n(X);\Q)$ is the ``local case'' $X=\Rb^N$.  In \S\ref{section:local} we give  (Theorem \ref{theorem:local}) an explicit computation of $H^\ast(\Poly^{\bd}_n(\Rb^N);\Q)$ .  The key idea is to consider a filtration of the ``discriminant locus'' in $\Sym^{\bd}(\Rb^N)$, whose complement is $\Poly^{\bd}_n(\Rb^N)$,
    and then to use associated cofiber sequences in an inductive argument.  This is similar to the work of Farb-Wolfson \cite{FW}, which in turn built on work of Segal \cite{Se} and Arnol'd \cite{Ar70b}.

\item In \S\ref{section:final} we combine the local computation with the combinatorial results of Section \ref{section:nequals} to obtain a {\em quantitative} description of the $S_D$-invariants of the $E_2$-page; by transfer this gives Theorem \ref{theorem:coho}.

\item In Section \ref{S:ss}, we use the description of the $E_2$-page to obtain information about $H^\ast(\Poly^{\bd}_n(X);\Q)$ and prove Theorem \ref{theorem:main}.
\end{enumerate}

\section{Analogies in number theory}
\label{S:NT}

In this section we indicate the statements in number theory that led us to the statement
of Theorem \ref{theorem:main}.

Given integers $a_1,\dots, a_m$, we say they are {\em relatively $n$-prime} if there does not exist an integer $b\geq 2$ such that $b^n\mid a_i$ for all $i$; in other words, if $\gcd(a_1,...,a_m)$ is $n$-power-free.  Let $\zeta(s)$ be the Riemann zeta function.  The following is a standard result in number theory.

\begin{theorem}[see, e.g. \cite{Benkoski}]\label{T:NT}
Given positive integers $m$ and $n$, the limit
$$
\lim_{d\ra\infty}
\frac{\#\{(a_1,\dots,a_m)\in (\mathbb{N}_{\leq d} )^m | \textrm{$a_1,\dots,a_m$ relatively $n$-prime}  \}}{\#\{(a_1,\dots,a_m)\in (\mathbb{N}_{\leq d} )^m   \}}
$$
exists and equals $\zeta(mn)^{-1}$.  In particular, this limit only depends on the product $mn$.
\end{theorem}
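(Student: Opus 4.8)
The plan is to prove Theorem~\ref{T:NT} by the standard Möbius sieve. Write $f(d)$ for the numerator, so the denominator is $d^m$, and note that an $m$-tuple $(a_1,\dots,a_m)$ fails to be relatively $n$-prime precisely when $p^n\mid\gcd(a_1,\dots,a_m)$ for some prime $p$, i.e. precisely when $g:=\gcd(a_1,\dots,a_m)$ is \emph{not} $n$-power-free. The key elementary input is the identity
\[
 \mathbf{1}[\,g\ \text{is}\ n\text{-power-free}\,]=\sum_{k\,:\,k^n\mid g}\mu(k)\qquad(g\in\mathbb{N}),
\]
which follows by writing $g=\prod_p p^{e_p}$: the squarefree $k$ with $k^n\mid g$ are exactly the divisors of $s:=\prod_{p\,:\,e_p\ge n}p$, so the right-hand side equals $\sum_{k\mid s}\mu(k)=[\,s=1\,]$, the condition that every prime occurs in $g$ to exponent $<n$.

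Substituting this into $f(d)=\sum_{(a_1,\dots,a_m)\in(\mathbb{N}_{\le d})^m}\mathbf{1}[\gcd(a_1,\dots,a_m)\ \text{is}\ n\text{-power-free}]$ and interchanging the two finite sums, I would obtain
\[
 f(d)=\sum_{k\ge 1}\mu(k)\,\#\{(a_1,\dots,a_m)\in(\mathbb{N}_{\le d})^m:\ k^n\mid a_i\ \text{for all}\ i\}=\sum_{k\le d^{1/n}}\mu(k)\Big\lfloor\tfrac{d}{k^n}\Big\rfloor^{m},
\]
since the $a_i$ range independently over the $\lfloor d/k^n\rfloor$ multiples of $k^n$ in $\mathbb{N}_{\le d}$, and the count vanishes once $k^n>d$. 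Writing $\lfloor d/k^n\rfloor^m=(d/k^n)^m+O_m\big((d/k^n)^{m-1}\big)$ for $k^n\le d$ and dividing by $d^m$ then gives
\[
 \frac{f(d)}{d^m}=\sum_{k\le d^{1/n}}\frac{\mu(k)}{k^{mn}}\;+\;O_m\!\Big(\frac1d\sum_{k\le d^{1/n}}\frac1{k^{n(m-1)}}\Big).
\]

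Now assume $mn\ge 2$. The main term tends as $d\to\infty$ to the absolutely convergent series $\sum_{k\ge 1}\mu(k)k^{-mn}=\prod_p(1-p^{-mn})=\zeta(mn)^{-1}$ (the standard Euler product), the tail being $\sum_{k>d^{1/n}}k^{-mn}=O\big(d^{(1-mn)/n}\big)=o(1)$. The error term is likewise $o(1)$: the inner sum is $O(1)$ when $n(m-1)\ge 2$, is $O(\log d)$ when $n(m-1)=1$ (i.e. $(m,n)=(2,1)$), and is $O(d^{1/n})$ when $m=1$ (which forces $n\ge 2$), so in every case it is $o(d)$ and the factor $1/d$ kills it. Hence the limit exists and equals $\zeta(mn)^{-1}$, depending only on the product $mn$. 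The sole remaining case is $m=n=1$, which is immediate: a single $a\le d$ is relatively $1$-prime iff $a=1$, so $f(d)=1$ and $f(d)/d\to 0=\zeta(1)^{-1}$ with the usual convention $\zeta(1)^{-1}=0$. No step here is genuinely hard; the only care required is to keep the error bounds uniform in $k$ and to handle the degenerate cases, so I anticipate no substantive obstacle.
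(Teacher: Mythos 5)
The paper states Theorem~\ref{T:NT} without proof, citing Benkoski as a standard reference; there is no argument in the paper to compare against. Your M\"obius-sieve proof is correct and is essentially the standard one: the identity $\mathbf{1}[g\ n\text{-power-free}]=\sum_{k:\,k^n\mid g}\mu(k)$, the interchange of sums giving $f(d)=\sum_{k\le d^{1/n}}\mu(k)\lfloor d/k^n\rfloor^m$, and the error analysis dividing into the cases $n(m-1)\ge 2$, $n(m-1)=1$, and $m=1$ are all sound, and the degenerate case $m=n=1$ is correctly disposed of under the convention $\zeta(1)^{-1}=0$. The one place worth stating explicitly (you do so only in the preamble) is the equivalence \emph{relatively $n$-prime $\iff$ $\gcd(a_1,\dots,a_m)$ is $n$-power-free}, which is what lets you apply the indicator identity to $g=\gcd(a_1,\dots,a_m)$; this is immediate since any $b\ge 2$ with $b^n\mid a_i$ for all $i$ has a prime factor $p$ with $p^n\mid\gcd(a_i)$.
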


Moreover, such a statement holds if we replace $\Z$ with the ring of integers $\O_K$ in an number field $K$,  the set $\mathbb{N}_{\leq d}$ with the set of ideals of $\O_K$ of norm at most $d$, the function $\zeta(s)$ with the Dedekind zeta function $\zeta_K(s)$, and relatively $n$-prime with the requirement that there be no non-trivial ideal $b\subset \O_K$ such that $b^n\mid a_i$ for all $i$.

In the usual analogy between number fields and function fields, we can also replace $\Z$ with the ring of integers in a function field over a finite field such as $\F_q[t]$. Let $S_d$ be the set of monic polynomials of degree $d$ in $\F_q[t]$.   A set of polynomials is {\em relatively $n$-prime} if there does not exist a non-constant polynomial $b$ that divides all of them.
\begin{theorem}[see, e.g. \cite{Morrison}]
Given positive integers $m$ and $n$, the limit
$$
\lim_{d\ra\infty}
\frac{\#\{(a_1,\dots,a_m)\in (S_d)^m | \textrm{$a_1,\dots,a_m$ relatively $n$-prime}  \}}{\#\{(a_1,\dots,a_m)\in (S_d )^m   \}}
$$
exists and equals $\zeta_{\F_q[t]}(mn)^{-1}$.  In particular, this limit only depends on the product $mn$.
\end{theorem}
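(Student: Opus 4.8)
The plan is a short sieve (M\"obius inversion over monic polynomials in $\F_q[t]$); it will in fact produce an \emph{exact} formula for the ratio once $d$ is large enough, so the limit exists simply because the sequence is eventually constant. Two elementary inputs are needed. First, the denominator is $\#\bigl((S_d)^m\bigr)=(\#S_d)^m=q^{dm}$, since $\#S_d=q^d$. Second, the zeta function of $\F_q[t]$ is the rational function
\[
\zeta_{\F_q[t]}(s)=\sum_{f\text{ monic}}|f|^{-s}=\sum_{e\geq 0}q^{e}q^{-es}=\frac{1}{1-q^{1-s}},
\]
so $\zeta_{\F_q[t]}(mn)^{-1}=1-q^{1-mn}$, which already exhibits the claimed dependence on $mn$ alone. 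Equivalently (and this is the form we use), the $\F_q[t]$-M\"obius function $\mu$ satisfies $\sum_{\deg h=e}\mu(h)=$ the coefficient of $x^{e}$ in $1-qx$, i.e.\ it is $1$ for $e=0$, is $-q$ for $e=1$, and vanishes for $e\geq 2$.

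Now the sieve. A tuple $(a_1,\dots,a_m)\in (S_d)^m$ is relatively $n$-prime precisely when $\gcd(a_1,\dots,a_m)$ is $n$-th-power-free, i.e.\ when there is no monic irreducible $p$ with $p^n\mid a_i$ for all $i$. Applying inclusion--exclusion to these events, indexed by finite sets $P$ of distinct monic irreducibles, and noting that ``$p^n\mid a_i$ for all $p\in P$'' is the single condition ``$h^n\mid a_i$'' for $h:=\prod_{p\in P}p$ (a squarefree monic polynomial), we obtain
\[
N^{(n)}_d:=\#\{(a_1,\dots,a_m)\in (S_d)^m\ \text{relatively $n$-prime}\}=\sum_{h\text{ monic}}\mu(h)\,\#\{(a_i)\in(S_d)^m: h^n\mid a_i\ \forall i\}.
\]
Writing $a_i=h^n b_i$ identifies $\{(a_i): h^n\mid a_i\ \forall i\}$ with $(S_{d-n\deg h})^m$ when $n\deg h\leq d$, and with $\emptyset$ otherwise; so the inner cardinality is $q^{m(d-n\deg h)}$ for $n\deg h\leq d$ and $0$ beyond. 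In particular the displayed sum is finite for each $d$.

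It remains to evaluate. Grouping by $e=\deg h$ and inserting the M\"obius identity above,
\[
N^{(n)}_d=q^{md}\sum_{e:\,ne\leq d}q^{-mne}\!\!\sum_{\deg h=e}\mu(h)=q^{md}\bigl(1-q\cdot q^{-mn}\bigr)=q^{md}\bigl(1-q^{1-mn}\bigr)\qquad\text{for all }d\geq n,
\]
since only $e=0$ and $e=1$ contribute and both indices occur once $d\geq n$. (For $d<n$ only $e=0$ survives, so $N^{(n)}_d=q^{md}$: every tuple of sufficiently low-degree polynomials is automatically relatively $n$-prime.) Dividing by $q^{md}$, the ratio equals $1-q^{1-mn}=\zeta_{\F_q[t]}(mn)^{-1}$ for every $d\geq n$; hence the limit exists, equals $\zeta_{\F_q[t]}(mn)^{-1}$, and depends only on $mn$. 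Equivalently, one has the clean generating-function identity $\sum_{d\geq0}N^{(n)}_d x^d=(1-qx^n)/(1-q^m x)$.

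There is no real obstacle here: the only feature distinguishing this from the number-field Theorem~\ref{T:NT} — where an infinite M\"obius series must be summed and an error term controlled — is that $\zeta_{\F_q[t]}$ is rational, so the relevant M\"obius sums are supported in degrees $\leq 1$ and the argument simply terminates. The resulting eventual constancy of the ratio can be regarded as a toy shadow of the homological-stability mechanism behind Theorem~\ref{theorem:stable} (and of Theorem~\ref{theorem:second}). The one point that needs a moment's care is the inclusion--exclusion bookkeeping, together with the observation that, for squarefree $h$, the family of conditions $\{p^n\mid a_i: p\mid h\}$ is equivalent to the single condition $\{h^n\mid a_i\}$.
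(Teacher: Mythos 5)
Your proof is correct. Note that the paper itself does not prove this statement; it cites Morrison--Dong as a reference, so there is no ``paper's proof'' to compare against. Your M\"obius sieve over $\F_q[t]$ is the standard argument, and you have executed it cleanly: the identification of the inclusion--exclusion with a sum of $\mu(h)$ over squarefree monic $h$, the bijection $a_i\mapsto a_i/h^n$ onto $S_{d-n\deg h}$, and the observation that $\sum_{\deg h=e}\mu(h)=1,-q,0,0,\dots$ (equivalently, $\zeta_{\F_q[t]}(s)^{-1}=1-q^{1-s}$ is a polynomial in $q^{-s}$) are all exactly right. The genuinely useful point you surface --- and which is worth keeping --- is that the rationality of $\zeta_{\F_q[t]}$ makes the sieve \emph{terminate}: the ratio is literally equal to $1-q^{1-mn}$ for every $d\geq n$, so the limit exists by eventual constancy rather than by any error-term estimate. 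This is the function-field simplification that has no analogue over $\Z$ (where one must bound a tail of the M\"obius series), and your remark tying this ``eventual constancy'' heuristically to homological stability is a reasonable informal gloss. One small editorial caveat: the paper's definition of ``relatively $n$-prime'' for polynomials appears to have dropped the $n$-th power (it reads ``non-constant $b$ dividing all of them''); you correctly read it as ``no non-constant $b$ with $b^n\mid a_i$ for all $i$,'' matching the integer definition given two paragraphs earlier. With that reading, your computation also gracefully handles the degenerate case $mn=1$, where $1-q^{1-mn}=0$ and indeed $N^{(1)}_d=0$ for $d\geq 1$.
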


Similarly to the above, the analogous version is also true when we replace $\F_q[t]$ with the ring of integers in any function field over any finite field.  The function field statements can also be interpreted geometrically as the following.

\begin{theorem}\label{T:FF}
Let $X$ be a  curve (not necessarily complete or smooth) over a finite field $\F_q$ with local zeta function $\zeta_X(s)$.
 Given positive integers $m$ and $n$, the limit
\begin{equation}
\lim_{\bd\rightarrow\infty}
\frac{\displaystyle {\#\Poly^{\bd}_n(X)}(\F_q)}{\displaystyle (\#{\Sym^{\bd}(X)}(\F_q))}
\end{equation}
exists and equals $\zeta_{X}(mn)^{-1}$.  In particular, the limit only depends on the product $mn$.
\end{theorem}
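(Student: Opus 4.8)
The plan is to carry out, in the function field setting, the same Euler product computation that proves the number field statement Theorem~\ref{T:NT} — reading the local factor $p^{-s}$ as a formal variable $T$ — and then to replace the Tauberian input of the number field argument by the elementary rationality of the zeta function of a curve. Throughout write
\[
Z_X(T):=\prod_{x\in |X|}\bigl(1-T^{\deg x}\bigr)^{-1}\in\Zb[[T]],
\]
where $|X|$ is the set of closed points, so that $\zeta_X(s)=Z_X(q^{-s})$, and let $a_d$ denote the $d$-th coefficient of $Z_X(T)$, i.e.\ the number of effective $\F_q$-rational $0$-cycles of degree $d$ on $X$; thus $\#\Sym^{\bd}(X)(\F_q)=\prod_{i=1}^m a_{d_i}$.

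The first step is a local factorization of the numerator. An $\F_q$-point of $\Poly^{\bd}_n(X)$ is an $m$-tuple $(D_1,\dots,D_m)$ of effective $\F_q$-rational $0$-cycles with $\deg D_i=d_i$, subject to the purely local constraint that no closed point occurs with multiplicity $\ge n$ in all of $D_1,\dots,D_m$. Since an effective $0$-cycle is just a finitely supported function from $|X|$ to $\Zb_{\ge0}$, this constraint factors over the closed points; the factor at a closed point of degree $f$ is
\[
\sum_{\substack{(e_1,\dots,e_m)\in\Zb_{\ge0}^m\\ \text{not all }e_i\ge n}}\ \prod_{i=1}^m x_i^{e_i f}
\;=\;\prod_{i=1}^m\frac{1}{1-x_i^{f}}\;-\;\prod_{i=1}^m\frac{x_i^{nf}}{1-x_i^{f}}
\;=\;\frac{1-(x_1\cdots x_m)^{nf}}{\prod_{i=1}^m\bigl(1-x_i^f\bigr)}.
\]
Multiplying over all closed points yields the identity in $\Zb[[x_1,\dots,x_m]]$
\[
\sum_{\bd\in\Zb_{\ge0}^m}\#\Poly^{\bd}_n(X)(\F_q)\,x_1^{d_1}\cdots x_m^{d_m}
\;=\;\Bigl(\prod_{i=1}^m Z_X(x_i)\Bigr)\cdot Z_X\bigl((x_1\cdots x_m)^n\bigr)^{-1},
\]
the exact analogue of the classical factorization $\zeta(s)^m\,\zeta(mns)^{-1}$ of the Dirichlet series counting relatively $n$-prime $m$-tuples.

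Next I would extract coefficients and take the limit. Writing $Z_X(u^n)^{-1}=\sum_{k\ge0}c_k u^k$ with $c_0=1$, the identity above gives
\[
\#\Poly^{\bd}_n(X)(\F_q)=\sum_{k=0}^{\min_i d_i}c_k\prod_{i=1}^m a_{d_i-k},
\qquad\text{hence}\qquad
\frac{\#\Poly^{\bd}_n(X)(\F_q)}{\#\Sym^{\bd}(X)(\F_q)}=\sum_{k=0}^{\min_i d_i}c_k\prod_{i=1}^m\frac{a_{d_i-k}}{a_{d_i}}.
\]
Now I invoke the standard structure theory of zeta functions of curves: $Z_X(T)$ is a rational function whose pole of smallest absolute value is a simple pole at $T=q^{-1}$ with positive residue (for the smooth projective model this is the Weil conjectures, the numerator having all roots of absolute value $q^{-1/2}$; the general case follows by normalization and by removing the finitely many missing closed points, which cancels the pole at $T=1$). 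Hence $a_d/q^d\to B$ with $B>0$, so $a_{d-k}/a_d\to q^{-k}$ as $d\to\infty$, and a routine estimate gives a uniform bound $a_{d-k}/a_d\le Cq^{-k}$ for all $0\le k\le d$ with $d$ large. The same rationality shows the power series $Z_X(u^n)^{-1}$ has radius of convergence $q^{-1/(2n)}>q^{-m}$, so $\sum_k c_k q^{-mk}$ converges absolutely; dominated convergence then lets me take the limit $\bd\to\infty$ (at any rates) termwise, giving
\[
\lim_{\bd\to\infty}\frac{\#\Poly^{\bd}_n(X)(\F_q)}{\#\Sym^{\bd}(X)(\F_q)}
=\sum_{k\ge0}c_k\,q^{-mk}
= Z_X(u^n)^{-1}\big|_{u=q^{-m}}
= Z_X\bigl(q^{-mn}\bigr)^{-1}=\zeta_X(mn)^{-1},
\]
which depends only on $mn$. (When $mn=1$ one has $\zeta_X(1)^{-1}=0$, matching the direct observation that $\Poly^{\bd}_1(X)$ is empty once $|\bd|>0$.)

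The only genuinely nontrivial point is the interchange of $\lim_{\bd\to\infty}$ with the infinite sum over $k$, which rests on the uniform estimate $a_{d-k}/a_d\le Cq^{-k}$ and hence on the pole structure of $Z_X$ in the stated generality (arbitrary, possibly singular and noncomplete, curves); the remainder is formal Euler-product bookkeeping. It is worth stressing that this arithmetic statement is easy for exactly the reason that its topological shadow, Theorem~\ref{theorem:main}, is hard: point counts are multiplicative over closed points and are blind to the higher differentials of any spectral sequence, whereas Poincar\'e polynomials are neither multiplicative in fibrations nor insensitive to those differentials — which is why Theorem~\ref{theorem:main} needs its dimension hypotheses and the Bj\"orner--Wachs shellability machinery, while here no such care is required.
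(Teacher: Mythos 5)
Your proof is correct, and it is along the lines the paper's authors clearly have in mind. Note, however, that the paper does not actually supply a proof of Theorem~\ref{T:FF}: in \S\ref{S:NT} the three arithmetic density statements (over $\Zb$, over $\F_q[t]$, and the geometric reformulation \ref{T:FF}) are all presented as standard background, with citations to Benkoski and to Morrison--Dong for the first two and no argument for the third, since it is essentially a restatement. So there is nothing in the paper to compare against step by step.

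That said, a few comments on what you wrote. The local factorization
\[
\sum_{\bd}\#\Poly^{\bd}_n(X)(\F_q)\,x_1^{d_1}\cdots x_m^{d_m}
=\Bigl(\prod_{i=1}^m Z_X(x_i)\Bigr)\,Z_X\bigl((x_1\cdots x_m)^n\bigr)^{-1}
\]
is the heart of the matter and is exactly right; the reduction from ``no geometric point has multiplicity $\ge n$ in every color'' to ``no closed point has multiplicity $\ge n$ in every color'' is valid because an $\F_q$-rational $0$-cycle is Galois-invariant. The interchange of limit and sum is the only step that needs real input, and you correctly isolate it: you need (i) $a_d\sim Bq^d$ with $B>0$, which follows from $Z_X$ being rational with a simple pole at $T=q^{-1}$ and no other singularity of modulus $\le q^{-1}$, and (ii) the Taylor series of $Z_X(u^n)^{-1}$ at $0$ to converge absolutely at $u=q^{-m}$, which follows from the zeros of $Z_X$ having modulus $\ge q^{-1/2}>q^{-mn}$ (Weil II bounds on $H^1_c$ apply to arbitrary, not necessarily smooth or complete, curves, so no separate reduction via normalization is really needed — your ``cancels the pole at $T=1$'' aside is harmless but immaterial, since that pole is already outside the relevant disc). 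With those two facts in hand, your dominated-convergence argument closes the proof, including the degenerate case $mn=1$, where $Z_X(q^{-1})^{-1}=0$ matches $\Poly^{\bd}_1(X)=\emptyset$ for $|\bd|>0$. Your closing remark about why the arithmetic statement is easy while its homological shadow is hard is apt and matches the perspective of the paper.
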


The number field version ``with punctures'' holds as well, taking the zeta function without the factors in the Euler product corresponding to the punctures.
In fact, Theorem~\ref{T:FF} holds for any connected variety $X$, not just one-dimensional $X$, with $\zeta_{X}(mn)^{-1}$ replaced by $\zeta_{X}(mn\dim X)^{-1}$.

\section{The Leray spectral sequence for $H^*(\Poly^{\bd}_n(X);\Q)$, and \\ applications}\label{S:ss}
In this section we state our main technical theorem, Theorem~\ref{theorem:coho} below, which gives the $E_2$ page of a spectral sequence converging to the cohomology of $\Poly^{\bd}_n(X)$.    We then apply this theorem as a black box to prove Theorems \ref{theorem:main}, \ref{theorem:odd}, \ref{theorem:stable} and \ref{theorem:second} given in the introduction.  The proof of Theorem~\ref{theorem:coho} will then take up the rest of the paper.

\subsection{Statement of the main technical theorem}

We want to understand the cohomology of the space $\Poly^{\bd}_n(X)$ for $X$ a smooth manifold with $\dim H^*(X;\Q)<\infty$.  To this end, we consider the Leray spectral sequence for  the inclusion $\Poly^{\bd}_n(X)\subset \Sym^{\bd}(X)$ and the constant sheaf $\Q$.  We denote the $(p,q)$ term of the $j^{th}$ page of this spectral sequence by $E^{p,q}_j(X,\bd,n)$.    Note that $E^{\ast,\ast}_j(X,\bd,n)$ is a bigraded algebra, graded by $(p,q)$.

We will need some notation for certain bigraded vector spaces. Denote by $\Qb[i]$ the rank 1 vector space of bidegree $(0,i)$, and by $H^j(X;\Qb[i])$ the vector space $H^j(X;\Q)$ with bidegree $(j,i)$.
Given any bigraded vector space $V$, the symmetric group $S_k$ acts on $V^{\tensor k}$ in the graded way with respect to the total grading, i.e. as in the K\"unneth formula.  Let $\Sym^{k}_{gr} V$ denote the trivial $S_k$ subpresentation of $V^{\tensor k}$.

The following theorem is the main technical result of this paper. For the definitions from mixed Hodge theory necessary to understand part (2) of the theorem, see \S\ref{section:mixedhodge} below.

\begin{theorem}[{\bf Cohomology of spaces of $0$-cycles}]
\label{theorem:coho}
    Let $X$ be a connected, smooth, orientable manifold. Fix $\bd\in\Nb^m$ and $n>0$.
    \begin{enumerate}
    \item \label{cohoodd} If $\dim(X)=2r+1, r>0,$ then the inclusion $\Poly^{\bd}_n(X)\into\Sym^{\bd}(X)$ induces an isomorphism on rational cohomology.
        \item \label{cohoodd2} If $\dim(X)=2r, r>0,$ then $E_2^{p,q}(X,\bd,n)=0$ unless $p+q\leq 2r|\bd|$ and $q/(2r(mn-1)-1)\in \Nb_{\le \min_i \frac{d_i}{n}}$, in which case it is isomorphic to the degree $(p,q)$ part of :
            \begin{equation}\label{eq:lerayinv}
                \Sym_{gr}^{q/(2r(mn-1)-1)} H^*(X;\Qb[2r(mn-1)-1])\otimes\bigotimes_{i=1}^m \Sym_{gr}^{d_i-nq/(2r(mn-1)-1)} H^*(X; \Qb[0])
            \end{equation}
where $H^p(X;\Qb[q])$ has bidegree $(p,q)$, and where bidegrees are additive under symmetric powers.
        \item \label{cohoodd3} If $X$ is a smooth complex variety with $\dim_\C(X)=r>0$ then $E_2^{p,q}(X,\bd,n)$ is isomorphic, \emph{with its mixed Hodge structure} to the degree $(p,q)$ part of:
            \begin{align*}
                &\Sym_{gr}^{q/(2r(mn-1)-1)} H^*(X;\Qb[2r(mn-1)-1](r(mn-1),r(mn-1)))\\
                &\otimes\bigotimes_{i=1}^m \Sym_{gr}^{d_i-nq/(2r(mn-1)-1)} H^*(X; \Qb[0](0,0)).
            \end{align*}
            where $\Qb[i](c,c)$ denotes the rank 1 vector space of bidegree $(0,i)$ and pure Hodge structure of weight $2c$.
    \end{enumerate}
\end{theorem}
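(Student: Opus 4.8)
The plan is to derive all three parts from one analysis of the Leray spectral sequence for $\Poly^{\bd}_n(X)\hookrightarrow\Sym^{\bd}(X)$, by three successive reductions: to an ordered model, to a local (affine-space) model, and then back up via a combinatorial computation. First I would introduce the ordered space $\widetilde{\Poly}^D_n(X)$, an ``$m$-colored'' ordered analogue of $\Poly^{\bd}_n(X)$ on which $S_D:=S_{d_1}\times\cdots\times S_{d_m}$ acts with quotient $\Poly^{\bd}_n(X)$. The square comparing $\widetilde{\Poly}^D_n(X)\hookrightarrow X^D\cong X^{|\bd|}$ to $\Poly^{\bd}_n(X)\hookrightarrow\Sym^{\bd}(X)$ is Cartesian, so, working rationally with $S_D$ finite, transfer identifies $E^{\ast,\ast}_j(X,\bd,n)$ with the $S_D$-invariant part of the Leray spectral sequence for the ordered inclusion $\widetilde\pi$; its $E_2$-page is $H^p(X^{|\bd|};R^q\widetilde\pi_*\Q)$. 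So everything reduces to (a) identifying the constructible sheaves $R^q\widetilde\pi_*\Q$, and (b) computing their cohomology on $X^{|\bd|}$ together with the $S_D$-action.

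For (a), the stalk of $R^q\widetilde\pi_*\Q$ at $\bar x\in X^{|\bd|}$ depends only on the set-partition of the coordinates recording which coincide and on their colors; by a conicality/excision argument it is the rational cohomology of the complement, in a small ball, of the ``colored $n$-equals arrangement'' --- the union, over all ways of selecting $n$ of the colliding coordinates of each of the $m$ colors, of the linear subspace forcing those $mn$ coordinates equal. I would run this arrangement through the Goresky--MacPherson formula, which rewrites the cohomology of the complement as a sum over the intersection lattice of reduced (co)homology of order complexes of lower intervals --- the posets of ``colored $n$-equals partitions''. The technical heart, which I expect to be the main obstacle, is then purely combinatorial: construct an EL-labelling in the sense of Bj\"orner--Wachs lexicographic shellability for these posets, conclude that each order complex is homotopy equivalent to a wedge of spheres, and read off the dimensions and multiplicities from the descending maximal chains. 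Unlike the classical $(m,n)=(1,2)$ case --- where the relevant poset is the (pure) partition lattice and only one sphere dimension occurs --- the colored $n$-equals posets are not pure, so several sphere dimensions appear simultaneously, and keeping control of this non-purity while tracking the induced $S_D$-action on the Goresky--MacPherson summands is the delicate point. The output is a qualitative description of the $E_2$-page (Theorem~\ref{theorem:leray}): a sum of terms indexed by colored set-partitions of $\{1,\dots,|\bd|\}$, each of the form $H^*(\text{the corresponding diagonal})\otimes(\text{explicit wedge-of-spheres cohomology})$, the diagonal being a product of copies of $X$.

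To make this quantitative I would prove the local case $X=\R^N$ (Theorem~\ref{theorem:local}): an explicit computation of $H^*(\Poly^{\bd}_n(\R^N);\Q)$. The method is to filter the discriminant $\Sym^{\bd}(\R^N)\setminus\Poly^{\bd}_n(\R^N)$ by depth of collision and run the associated cofiber sequences in an induction on $|\bd|$, in the spirit of Arnol'd, Segal, Vassiliev and Farb--Wolfson. The upshot: rationally, $H^*(\Poly^{\bd}_n(\R^N))$ is freely generated over $H^*(\Sym^{\bd}(\R^N);\Q)=\Q$ by one class for each simultaneous collision of $n$ points of every color, sitting in cohomological degree $N(mn-1)-1$ (so $2r(mn-1)-1$ when $N=2r$), where the relevant generator is the top class of a sphere $S^{N(mn-1)-1}$ on which a transposition of two equal-colored cluster points acts by $(-1)^N$. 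Feeding the resulting Betti numbers back into the Goresky--MacPherson bookkeeping collapses the non-purity: the only colored set-partitions contributing to the ordered $E_2$-page have $k$ clusters of $mn$ points (with $n$ of each color) plus $d_i-nk$ free points of color $i$, contributing in $q$-degree $k(2r(mn-1)-1)$; the supporting diagonal $X^{\,k+|\bd|-mnk}$ has real dimension $2r(|\bd|-(mn-1)k)$, so $p+q\le 2r|\bd|$. Taking $S_D$-invariants turns the unordered clusters into $\Sym^{k}_{gr}H^*(X;\Q[2r(mn-1)-1])$ and the free color-$i$ points into $\Sym^{d_i-nk}_{gr}H^*(X;\Q[0])$, giving formula~\eqref{eq:lerayinv} and hence part (2).

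Parts (1) and (3) follow with small additions. For part (1), when $N=2r+1$ is odd we have $(-1)^N=-1$, so transpositions of equal-colored points within a cluster act by $-1$ on the generating classes; the local computation shows that, beyond the bottom row, the whole ordered $E_2$-page is built from such sign-type representations of the $S_D$-factors, so $S_D$-invariants leave only the row $q=0$, equal to $H^*(\Sym^{\bd}(X);\Q)$. The surviving edge homomorphism is precisely pullback along $\Poly^{\bd}_n(X)\hookrightarrow\Sym^{\bd}(X)$, and the spectral sequence degenerates, giving the claimed isomorphism. For part (3), when $X$ is a smooth complex variety every space and map above is algebraic, so the Leray spectral sequence --- and the Goresky--MacPherson and cofiber-sequence computations feeding it --- carry functorial mixed Hodge structures; one needs only that the generating class, arising via a Gysin sequence from $H^{2r(mn-1)-1}$ of the complement of a codimension-$r(mn-1)$ complex-linear subspace, is pure of Tate type of weight $2r(mn-1)$, i.e. carries the twist $\Q(-r(mn-1))$. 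Propagating this twist through the graded symmetric powers produces the mixed Hodge structure on~\eqref{eq:lerayinv}, completing part (3).
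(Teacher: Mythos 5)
Your proposal follows the paper's proof line by line: pass to the ordered model $\widetilde{\Poly}^D_n(X)$ and take $S_D$-invariants by transfer; identify the stalks of $R^q\widetilde\pi_*\Q$ via the Goresky--MacPherson formula applied to the colored $n$-equals arrangement; handle the non-pure intersection posets with a Bj\"orner--Wachs EL-labelling; pin down the numerics with the local computation for $X=\R^N$ via a discriminant filtration and cofiber sequences; and finally read off the $S_D$-invariants (in the even case a sign-free symmetric power, in the odd case vanishing above $q=0$ because of the orientation sign). This is exactly the architecture of the paper (Theorems \ref{theorem:leray}, \ref{thm:EL}, \ref{theorem:local}, Lemma \ref{L:SIinv}), so your plan is essentially the paper's own.

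One place where your sketch is lighter than what the paper actually has to do is part (3): it is not automatic that the Goresky--MacPherson isomorphism respects mixed Hodge structures, since it is constructed topologically. The paper establishes this by an \'etale-local comparison (via Noether normalization) to linear arrangements, invoking the Bj\"orner--Ekedahl computation of weights and Saito's theory of mixed Hodge modules to endow the pushforward sheaves with the right structure. Your appeal to ``everything is algebraic and hence functorially mixed Hodge'' is the right intuition but would need this extra work to be a complete argument.
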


\begin{remark}
Theorem~\ref{theorem:coho} only gives the additive structure of $H^*(\Poly^{\bd}_n(X),\Q)$.  The precise multiplicative structure seems more subtle. \footnote{This question has been addressed in a recent work of Ho \cite{Ho}.}  However, after passing to an associated graded of a certain filtration,  $H^*(\Poly^{\bd}_n(X),\Q)$ has a particularly nice multiplicative structure which we hope to address in forthcoming work.
\end{remark}

\subsection{Application 0: Homological stability}

In this subsection we deduce Theorem~\ref{theorem:stable} from Theorem~\ref{theorem:coho}.   When $\dim(X)$ is odd, the theorem follows from Statement \ref{cohoodd} of Theorem~\ref{theorem:coho} together with classical rational homological stability for symmetric powers.  We thus assume that $\dim(X)$ is even.

Given $\bd$, $n$ and $X$, consider the $S_{\bd}$-equivariant map between the spaces of ordered $0$-cycles $\widetilde{\Poly}^{\bd+1_i}_n(X)\to  \widetilde{\Poly}^{\bd}_n(X)$ given by forgetting the last point of color $i$.  This map induces an $S_{\bd}$-equivariant map between the associated Leray spectral sequences for the inclusion into the ordered products.    Transfer gives a corresponding map $\Psi: E_r^{p,q}(X,\bd,n)\to E_r^{p,q}(X,\bd+1_i,n)$ between the Leray spectral sequences computing $H^\ast(\Poly^{\bd}_n(X);\Qb)$ and $H^\ast(\Poly^{\bd+1_i}_n(X);\Qb)$.   The $E_2$ pages of these spectral sequences are given explicitly by Statement \ref{cohoodd2} of Theorem~\ref{theorem:coho}.  The proof of
Theorem~\ref{theorem:coho} will show that for $q\leq d_i\cdot (2r(mn-1)-1)/n$ and $p+q\leq 2r|\bd|$
the map $\Psi$ is given by the tensor product of the identity map on the first big factor of \eqref{eq:lerayinv}, the identity map on all but the $i^{\rm th}$ factor of the second big factor, and the inclusion $\Sym_{gr}^{d_i-nq/(2r(mn-1)-1)} H^*(X; \Qb[0])\to  \Sym_{gr}^{d_i+1-nq/(2r(mn-1)-1)} H^*(X; \Qb[0])$ on the remaining factor.  Applying classical rational homological stability for symmetric products gives the desired result and the stated stable range.

\subsection{Application 1: Coincidences between limiting homological densities}
 \label{section:odd}

In this subsection we apply Theorem~\ref{theorem:coho} to deduce Theorem~\ref{theorem:main}.
To understand the differential in our spectral sequence, we will recall some basic facts about the differential in the Leray spectral sequence for the complement of a closed submanifold.
\begin{lemma}\label{L:map}
Let $Y$ be a smooth manifold and let $Z$ be a smooth, closed submanifold with orientable normal bundle.  Let $k\geq 1$ be the codimension of $Z$ in $Y$.  There is a map
$$
H^*(Z;\Q) \ra H^{*+k}(Y;\Q)
$$
described in any of the following equivalent ways.
\begin{enumerate}
\item The differential $d_k: H^*(Z;\Q) \cong E_2^{*,k-1}   \ra E_2^{k+*,0}\cong H^{*+k}(Y;\Q)$ in the Leray spectral sequence for the inclusion $Y\setminus Z \ra Y$ with $\Q$ coefficients.
\item The composite $H^*(Z;\Q) \stackrel{\cup Th_{N_{Z/Y}}}{\longrightarrow} H^{*+k}(N_{Z/Y}, N_{Z/Y}\setminus 0;\Q)\cong
 H^{*+k}(Y, Y\setminus Z;\Q) \rightarrow H^{*+k}(Y;\Q)$ of the Thom isomorphism for the normal bundle $N_{Z/Y}$, an isomorphism from the tubular neighborhood theorem and excision, and the map from relative cohomology to cohomology.
\item The composite $H^*(Z;\Q) \cong \Hom(H_c^{\dim Z-*}(Z;\Q),\Q ) \ra \Hom(H_c^{\dim Z-*}(Y;\Q),\Q ) \cong
H^{*+k}(Y;\Q)$ of the Poincar\`e duality map for $Z$, the map from the usual pull-back of compactly supported cohomology and the
 the Poincar\`e duality map for $Y$.
\item When $Z$ is connected and orientable, and $Y=Z^\ell$ with $Z\subset Y$ the diagonal, and $e_i$ is a graded basis for $H_c^*(Z;\Q)$ and  $\check{e}_i$ is a Poincar\`e dual basis
for $H^*(Z;\Q)$
and $T:H^{\dim Z}(Z;\Q)\cong \Q$, the map
$$\alpha \mapsto \sum_{i_1,\dots,i_\ell } T(e_{i_1}\cup\cdots \cup e_{i_{\ell}}\cup \alpha) \check{e}_{i_1}\tensor \cdots \tensor \check{e}_{i_{\ell}}. $$
\end{enumerate}
\end{lemma}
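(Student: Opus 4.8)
The plan is to establish that all four descriptions produce the same map by showing each is a standard manifestation of the Gysin/Thom construction, and then matching them pairwise. The cleanest backbone is description (2): the Thom isomorphism for the normal bundle $N_{Z/Y}$ together with the tubular neighborhood theorem gives $H^{*}(Z;\Q)\cong H^{*+k}(Y,Y\setminus Z;\Q)$, and composing with the natural map $H^{*+k}(Y,Y\setminus Z;\Q)\to H^{*+k}(Y;\Q)$ defines the map unambiguously. I would take this as the definition and then verify the other three agree.

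First I would identify (2) with (1). The Leray spectral sequence for the open inclusion $j\colon Y\setminus Z\hookrightarrow Y$ has $E_2^{p,q}=H^p(Y;R^qj_*\Q)$. Since $Z$ is a smooth closed submanifold of codimension $k$ with orientable normal bundle, $R^qj_*\Q$ vanishes for $0<q<k-1$ and $R^{k-1}j_*\Q$ is the constant sheaf $\Q$ on $Z$ (extended by zero), by a local computation on a tubular neighborhood where $Y\setminus Z$ retracts to a sphere bundle $S^{k-1}\hookrightarrow$. Thus $E_2^{*,0}=H^*(Y;\Q)$ and $E_2^{*,k-1}=H^*(Z;\Q)$, and there are no differentials into or out of these entries before $d_k$, so $d_k$ is the first possibly-nonzero differential between them. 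To see that $d_k$ equals the Thom-class map, I would compare the long exact sequence of the pair $(Y,Y\setminus Z)$ — which by excision and the tubular neighborhood theorem is the Gysin sequence — with the exact sequence of low-degree terms of the spectral sequence; the connecting map in the Gysin sequence is exactly cup-with-Thom-class followed by restriction, and it is a general fact that this connecting map is identified with $d_k$ under the spectral sequence of the filtration by $Y\setminus Z$.

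Next, (2) $\Leftrightarrow$ (3) is Poincar\'e–Lefschetz duality: the Gysin map $H^*(Z;\Q)\to H^{*+k}(Y;\Q)$ is dual to the restriction (pullback) map $H^{\dim Z-*}_c(Y;\Q)\to H^{\dim Z-*}_c(Z;\Q)$ under the duality isomorphisms $H^*(Z)\cong \Hom(H^{\dim Z-*}_c(Z),\Q)$ and $H^{*+k}(Y)\cong \Hom(H^{\dim Y-*-k}_c(Y),\Q)=\Hom(H^{\dim Z-*}_c(Y),\Q)$ (using $\dim Y=\dim Z+k$). This is standard once one checks the orientation conventions are compatible — the orientation of $N_{Z/Y}$ together with an orientation of $Z$ gives one of $Y$ — which is where a sign check is needed but no real difficulty. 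Finally (3) $\Rightarrow$ (4) is a direct computation in the special case $Y=Z^\ell$, $Z\hookrightarrow Y$ the small diagonal: here $H^*(Y)=H^*(Z)^{\otimes \ell}$ by K\"unneth, $H^*_c(Y)=H^*_c(Z)^{\otimes\ell}$, pullback along the diagonal is iterated cup product, and dualizing against the K\"unneth basis $\check e_{i_1}\otimes\cdots\otimes\check e_{i_\ell}$ and its dual basis $e_{i_1}\otimes\cdots\otimes e_{i_\ell}$ produces precisely the formula $\alpha\mapsto \sum T(e_{i_1}\cup\cdots\cup e_{i_\ell}\cup\alpha)\,\check e_{i_1}\otimes\cdots\otimes\check e_{i_\ell}$, using that $T$ identifies the top class and the pairing $\langle e_i,\check e_j\rangle=\delta_{ij}$.

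The main obstacle is the identification (1) $=$ (2): carefully matching the differential $d_k$ of the Leray spectral sequence with the Gysin connecting homomorphism requires knowing the sheaf $R^{k-1}j_*\Q$ explicitly (with its correct constant-sheaf structure on $Z$, depending on the orientation of the normal bundle) and tracking how the edge maps of the spectral sequence of the pair $(Y,Y\setminus Z)$ interact with the filtration. Everything else — the duality identifications and the diagonal computation — is bookkeeping with orientations and signs once this core comparison is in place.
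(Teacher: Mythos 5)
Your proof proposal is correct and follows essentially the same structure as the paper's own (very terse) proof: the paper cites Bott--Tu for the identifications (1)~$\Leftrightarrow$~(2) and (2)~$\Leftrightarrow$~(3), and describes (4) as ``an explicit version of (3)''; you flesh out exactly these steps with a sensible level of detail, including the computation of $R^q j_*\Q$ via the sphere-bundle local model and the duality bookkeeping. One minor point: the lemma's statement calls $Z\subset Z^\ell$ the ``big diagonal,'' but what is meant (and what you correctly work with) is the diagonal image of $Z$ itself, sometimes called the small or thin diagonal, since that is the smooth closed submanifold of codimension $(\ell-1)\dim Z$ to which the rest of the lemma applies.
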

\begin{proof}
The equivalence of 1 and 2 follows by a similar argument as the identification of the differential in \cite[p.177-178]{BottTu}.  The equivalence of 2 and 3 is explained in \cite[p.65-69]{BottTu}.  It is easy to work out 4 as an explicit version of 3; see for example Chapter 11 of \cite{MS}.
\end{proof}


\begin{theorem}\label{T:getvannew}
Let $X$ be a connected orientable smooth manifold such that the cup-product of any $mn$ compactly supported cohomology classes is $0$.  Then all the differentials of the Leray spectral sequence for the inclusion  $\Poly^{\bd}_n(X)\subset \Sym^{\bd}(X)$ vanish.
\end{theorem}

\begin{remark}
\label{remark:why:vanishing}
Note that when $X=\Rb^{r}$ the hypothesis of Theorem~\ref{T:getvannew} holds by \cite[Lemma 1.2.4]{Arabia}, and thus it similarly holds for any open submanifold of $X=\Rb^{r}$ by {\em loc. cit.}.  If $X$ is an affine variety over $\C$, then each non-zero $e_i\in H^*_c(X,\Q)$ has degree $\geq \frac{1}{2} \dim X$ (by the Andreotti-Frankel Theorem \cite{AF}), and $e_1\cup e_2\cup e_3$ is thus $0$.
As another example, if $X$ is a connected orientable non-compact smooth manifold,
 then since each non-zero $e_i\in H^*_c(X,\Q)$ has degree at least 1 (by $X$ non-compact) we have $e_1\cup \cdots \cup e_k=0$ for $k>\dim X$.
This proves Corollary~\ref{C:main}.
In general, if $X$ is a  connected orientable non-compact smooth manifold,
and $s$ is the highest degree in which $X$ has non-vanishing cohomology, then for $k>\frac{\dim X}{\dim X-s}$, the hypothesis of  Theorem~\ref{T:getvannew} holds by a similar argument.
\end{remark}

\begin{proof}
We have the complement of the diagonal $j : X^{mn}\setminus X \rightarrow X^{mn}$.
 We claim that the differential $d_{\dim X(mn-1)}$ vanishes on $E^{0,\dim X(mn-1)-1}$ for the Leray spectral sequence for $j$ with rational coefficients.  The differential is given by Lemma~\ref{L:map} (4) and is clearly seen to be $0$ if and only if    the cup-product of any $mn$ compactly supported cohomology classes is $0$.


 Define $\widetilde{\Poly}^D_n(X)\subseteq X^{D}$
 to be the space of tuples of (not necessarily distinct) points in $X$ labeled by the elements of $D$ such that no point of $X$ has at least $n$ labels of each color.
 For any choices of $n$ integers from $1$ to $d_i$ for each $i$ from $1$ to $m$, we have a morphism from  $\pi : \tilde{\Poly}^{\bd}_n(X) \rightarrow X^{|\bd|}$ to the inclusion $j$ that projects $X^{|\bd|}$ to the $mn$ chosen coordinates.

From Theorem~\ref{theorem:leray} and Lemma~\ref{L:BW}, we have that in the Leray spectral sequence for $\pi$, $$E_2^{0,\dim X(mn-1)-1}=\bigoplus_{\substack{I\in \Pi^D_n\\I \textrm{ singletons except one subset of size }mn}}
H^0(X_I,\Z)$$
and
$E_2^{\dim X(mn-1),1}=H^{\dim X(mn-1)}(X^{|\bd|};\Z).$ Moreover, we can see $d_{\dim X(mn-1)}$ is trivial on $H^0(X_I,\Z)$ here because it pulls back from the Leray differential for the inclusion $j$ via the choices of coordinates given by $I$.

Consider the subalgebra $A$ of the $E_2$ page generated by the bottom row and $E_2^{0,\dim X(mn-1)-1}$.  By the above
$d_{\dim X(mn-1)}$ vanishes on $A$.  All lower or higher differentials then also vanish on $A$ because they are forced by degree to vanish on the generators of $A$.  By Theorem~\ref{theorem:leray}, Lemma~\ref{L:SIinv}, and the proof of Theorem~\ref{theorem:coho} (specifically \eqref{equation:sfinvariance}), the algebra $A$ includes all the $S_D$ invariants of the $E_2$ page, and thus all  differentials vanish for the Leray spectral sequence for $\Poly^{\bd}_n(X)\subset \Sym^{\bd}(X)$, as desired.
\end{proof}

Thus under the hypothesis of Theorem~\ref{T:getvannew}, for with $mn\geq k$, we have, by Theorem~\ref{theorem:coho}
\begin{equation}\label{E:formula}
\sum_{\bd \in \mathbb{N}^{m}} [H^*(\Poly^{\bd}_{n}(X))] =
[\Sym_{gr}^* H^*(X,\Q)]
\prod_{i=1}^{m} [\Sym_{gr}^* H^*(X,\Q)]
\end{equation}
is an equality in the Grothendieck ring of $\Z^{(m+1)}$ graded vector spaces (replacing the bigrading above), where
$H^j(\Poly^{\bd}_{n}(X))$ has grade $(j,d_1,\dots,d_m)$ and the first $H^*(X,\Q)$ has grade
$(*+2\dim X(mn-1)-1, n,\dots,n)$ and the $i$th  $H^*(X,\Q)$ in the product has grade
$(*,0,\dots,0,1,0,\dots,0)$, where the $1$ is in the $i+1$ coordinate, and the grading used in the definition of
$\Sym_{gr}$ is given by the first coordinate.
Since $$
\sum_{\bd \in \mathbb{N}^{m}} [H^*(\Sym^{\bd}(X))] =
\prod_{i=1}^{m} [\Sym_{gr}^* H^*(X,\Q)],
$$
with the analogous gradings,
we can conclude the following result about the limits in Theorem~\ref{theorem:main}.

\begin{theorem}
\label{C:formula}
    Fix positive integers $m,n$ with $mn\geq 2$. Let $X$ be a connected, smooth orientable manifold with $\dim H^*(X;\Q)<\infty$.  Suppose that the Leray spectral sequence for the inclusion
    $\Poly^{\bd}_n(X)\subset \Sym^{\bd}(X)$ and the sheaf $\Qb$ degenerates on the $E_2$ page.  Let $b_i(X):=\dim H^i(X;\Q)$.
     Then
    \begin{align*}
        \lim_{\bd\rightarrow\infty}
        \displaystyle P_{\Poly^{\bd}_n(X)}(t)&=
        \prod_{i\geq 0} (1 - (-t)^{i+2r(mn-1)-1} )^{-(-1)^{i+2r(mn-1)-1} b_i(X)}
        \left(\prod_{i\geq 1} (1 - (-t)^i )^{-(-1)^i b_i(X)} \right)^m\intertext{and}
        \lim_{\bd\rightarrow\infty}
        \frac{\displaystyle P_{\Poly^{\bd}_n(X)}(t)}{\displaystyle (P_{\Sym^{\bd}(X)}(t))} &=
        \prod_{i\geq 0} (1 - (-t)^{i+2r(mn-1)-1} )^{-(-1)^{i+2r(mn-1)-1} b_i(X)}.
    \end{align*}

    Here $\lim_{\bd\rightarrow\infty}$ means ``as all $d_i\rightarrow\infty$'' (at any rates), and we take the limit in the ring of formal power series $\Zb[[t]]$ with the usual $t$-adic topology.
\end{theorem}

\begin{proof}
The $E_2$ page of the Leray spectral sequence computing $H^* (\Poly_n^{\bd}(X),\Q)$ is given by Theorem~\ref{theorem:coho}.  The assumption that this spectral sequences degenerates on the $E_2$ page thus implies that
for $k\geq 0$, when all the $d_i$ are sufficiently large (given $m,n,r,k$), then $\dim_{\Qb} H^k (\Poly_n^{\bd}(X),\Q)$ is the dimension of the of total degree $k$ part of
$$
           \Sym_{gr}^* H^*(X;\Qb[2r(mn-1)-1])\otimes\bigotimes_{i=1}^m \Sym_{gr}^{*} H^*(X; \Qb[0]).
$$
Recall that for a positively graded vector space $V$ with $V_i$ the degree $i$ part, that the Poincar\'{e} series of $\Sym_{gr}^* V$ is
$$
\prod_{i\geq 1} (1 - (-1)^i t^i )^{-(-1)^i\dim V_i}.
$$
The theorem then follows from the multiplicativity of  Poincar\'{e} series under tensor product.
\end{proof}

Theorem~\ref{theorem:main} then follows  from Theorems~\ref{C:formula} and \ref{T:getvannew}.

\subsection{Application 2: Coincidences for Euler characteristics}

In this subsection we apply Theorem~\ref{theorem:coho} to deduce Claim 1 of Theorem~\ref{theorem:second}. By Hopf's Theorem that the Euler characteristic of a complex is the Euler characteristic of its homology, we have, for any $m$,
    \begin{align*}
     \chi(\Poly^{\bd}_n(X))=\chi(E^{\ast,\ast}_2(X,\bd,n)).
    \end{align*}
When $\dim X=2r$ is even, let $g=2r(mn-1)-1$, and Theorem~\ref{theorem:coho} gives
\begin{align*}
\sum_{\bd\in \Zb_{\geq0}^m} P(E^{\ast,\ast}_2(X,\bd,n)) x_1^{d_1}\cdots x_m^{d_m}
&=\prod_{i\geq 0} (1-(-t)^{g+i}(x_1\cdots x_m)^n )^{(-1)^i b_i(X) }  \prod_{k=1}^m \prod_{i\geq 0} (1 - (-t)^i x_k )^{ (-1)^{i+1}b_i(X)},
\end{align*}
where $b_i(X)=\dim H^i(X,\Q).$  Setting $x_i=x$ and $t=-1$ gives
\begin{align*}
\sum_{\bd\in \Zb_{\geq0}^m} \chi(\Poly^{\bd}_n(X)) x^{|\bd|}
&=\prod_{i\geq 0} (1-x^{mn} )^{(-1)^i b_i(X) }  \prod_{k=1}^m \prod_{i\geq 0} (1 -  x )^{ (-1)^{i+1}b_i(X)}.
\end{align*}
It is standard that
$$
\sum_{d\geq 0} \chi(\Sym^d X) x^d=(1-x)^{-\chi(X)},
$$
and thus Claim 1 of Theorem~\ref{theorem:second} follows.

\subsection{Application 3: Coincidences for Hodge-Deligne polynomials}
\label{section:mixedhodge}

\para{Mixed Hodge structures and Hodge-Deligne polynomials}
We will need some of the basics of mixed Hodge structures; see, e.g., Chapters 3 and 4 of
\cite{PS}, as well as \cite{Sa}.

Let $V$ be a finite-dimensional vector space over $\Q$.  A (rational) {\em pure Hodge structure of weight $n$} on $V$ is a decomposition
\[V_\C:=V\otimes_\Q\C=\bigoplus_{p+q=n}V^{p,q}\]
so that $V^{q,p}=\overline{V^{p,q}}$.  This decomposition gives a decreasing {\em Hodge filtration} $F^iV_\C:=\oplus_{p\geq i}V^{p,q}$ of $V$.  Classical Hodge theory shows that for any
smooth, projective (complex) algebraic variety
$X$, the vector space $H^n(X;\Q)$ has a pure Hodge structure of weight $n$.

Even when $X$ is not compact (and in fact not even assumed to be smooth),  Deligne proved that for each $i\geq 0$, the vector space $H^i(X;\Q)$ comes equipped with a {\em mixed Hodge structure} : there is an ascending {\em weight filtration}
\[0=W_{-1}\subseteq W_0\subseteq \cdots \subseteq W_{2i}=H^i(X;\Q)\]
and a descending {\em Hodge filtration}
\[H^i(X;\C)=F^0\supseteq F^1\supseteq\cdots\supseteq F^m\supseteq F^{m+1}=0\]
with the property that the filtration induced by $F$ on each graded piece $\Gr_n(W):=W_n/W_{n-1}$ is a pure Hodge structure of weight $n$.  Define $h^{p,q,i}(X)$ to be the dimension of the $p^{th}$ graded piece of this $F$-induced filtration on $\Gr_{p+q}(W)$.   The {\em Hodge-Deligne number} $h^{p,q}(X)$ is then defined as $h^{p,q}(X):=\sum_{i\geq 0}h^{p,q,i}(X)$.  Each of these numbers is finite, and only finitely many of them are nonzero.  The {\em Hodge-Deligne polynomial} $\HD_X(u,v)$ of $X$ is the generating function :

\[\HD_X(u,v):=\sum_{p,q\geq 0}h^{p,q}(X)u^pv^q \in \Z[u,v].\]

It is more common to define the Hodge-Deligne polynomial with compactly supported cohomology, but we use regular cohomology here for simplicity. Since we consider smooth varieties,
by Poincar\'{e} duality the data in the polynomial is the same as in the usual definition.

Deligne proved that mixed Hodge structures are {\em functorial} : for any algebraic map $f:X\to Y$ between varieties, the induced map $f^*:H^*(Y;\Q)\to H^*(X;\Q)$  strictly preserves mixed Hodge structures.  Deligne also proved that the Kunneth isomorphism $H^*(X\times Y)\iso H^*(X)\otimes H^*(Y)$ is compatible with mixed Hodge structures, as are cup products.

\begin{corollary}\label{cor:HDpoly}
    Fix positive integers $m,n$ with $mn\geq 2$. Let $X$ be a smooth, connected complex variety.   Then
    \[ \lim_{\bd\rightarrow\infty}\frac{\HD_{\Poly^{\bd}_n(X)}(u,v)}{\HD_{\Sym^{\bd}(X)}(u,v)}
    =\prod_{p,q\ge 0}\prod_{i=0}^{2r}(1-(-1)^{i+2r(mn-1)-1}u^{p+r(mn-1)}v^{q+r(mn-1)})
    ^{-(-1)^{i+2r(mn-1)-1}h^{p,q,i}(X)}
   \]
   In particular, this limit depends only on the product $mn$ and on the mixed Hodge structure on $H^*(X;\Q)$.  Here $\lim_{\bd\rightarrow\infty}$ means ``as all $d_i\rightarrow\infty$'' (at any rates), and we take the limit in the ring of formal power series $\Zb[[u,v]]$ with the usual adic topology.
    \end{corollary}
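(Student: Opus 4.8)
The plan is to run the same argument as in Theorem \ref{C:formula}, but now keeping track of mixed Hodge structures (equivalently, the Hodge--Deligne polynomial) rather than just Poincar\'e polynomials, using Statement \ref{cohoodd3} of Theorem~\ref{theorem:coho} in place of Statement \ref{cohoodd2}. First I would invoke Theorem~\ref{T:getvan} (or rather its corollary for affine varieties, noting that a smooth complex variety $X$ can always be taken together with a smooth compactification so that the hypothesis there is automatically satisfied once $mn\dim X$ is large relative to the top nonvanishing cohomological degree of $X$; but in fact we only need this for the \emph{limit} $\bd\to\infty$, so for each fixed cohomological degree $k$ the relevant differentials eventually vanish as the $d_i$ grow) to conclude that, for each fixed $k$, once all $d_i$ are sufficiently large the spectral sequence contributes to $H^k(\Poly^{\bd}_n(X);\Q)$ only through its $E_2$-page. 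Since mixed Hodge structures are functorial and compatible with the Leray spectral sequence (this is where one cites that the Leray spectral sequence of a map of varieties carries mixed Hodge structures, as in Arapura or Saito), this degeneration is a degeneration of a spectral sequence of mixed Hodge structures, so $H^k(\Poly^{\bd}_n(X);\Q)$ is isomorphic as a mixed Hodge structure to the total-degree-$k$ part of the bigraded object in \eqref{eq:lerayinv}, with the Hodge structure on each factor as recorded in Statement \ref{cohoodd3}.

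Next I would compute the Hodge--Deligne generating function of that bigraded object. Recall the standard fact that for a bigraded vector space $V = \bigoplus V^{p,q}$ equipped with mixed Hodge structure, the Hodge--Deligne polynomial of $\Sym_{gr}^\ast V$ (symmetric powers taken with Koszul signs with respect to the cohomological grading) is the infinite product
\[
\prod_{p,q,i}\bigl(1-(-1)^{i}u^{p'}v^{q'}\bigr)^{(-1)^{i}\dim V^{p,q,i}},
\]
where $(p',q')$ is the Hodge bidegree contributed by a class in cohomological degree $i$ with Hodge type $(p,q)$; in our setting the first big factor of \eqref{eq:lerayinv} is built from $H^\ast(X;\Q[2r(mn-1)-1](r(mn-1),r(mn-1)))$, so a class of Hodge type $(p,q)$ in $H^i(X;\Q)$ contributes cohomological degree $i + 2r(mn-1)-1$ and Hodge bidegree $(p+r(mn-1),\,q+r(mn-1))$. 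This gives exactly the displayed product in the statement. The second big factor of \eqref{eq:lerayinv}, namely $\bigotimes_{i=1}^m \Sym_{gr}^{\ast} H^\ast(X;\Q[0])$, has Hodge--Deligne generating function $\HD_{\Sym^{\bullet}(X)}(u,v)^m$ raised appropriately --- more precisely its generating series in the formal variables $x_1,\dots,x_m$ is $\prod_{k}\prod_{p,q,i}(1-(-1)^i x_k u^p v^q)^{(-1)^{i+1}h^{p,q,i}(X)}$ --- which by Macdonald's formula (the Hodge-theoretic refinement) is $\prod_k \HD_{\Sym^{\bullet}(X)}(u,v)$-type, matching $\HD_{\Sym^{\bd}(X)}(u,v)$ once one sets $x_k = x$ and extracts the $x^{|\bd|}$ coefficient. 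Dividing, the second factor cancels exactly against the denominator $\HD_{\Sym^{\bd}(X)}(u,v)$ in the limit $\bd\to\infty$, and only the first factor survives, yielding the asserted formula.

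The main obstacle is the bookkeeping of Hodge weights through the Tate twists: one has to verify that Statement \ref{cohoodd3}'s twist $(r(mn-1),r(mn-1))$ on the ``diagonal'' factor is precisely what makes the exponents in the displayed product come out as $u^{p+r(mn-1)}v^{q+r(mn-1)}$ rather than something shifted, and that the $\Sym_{gr}$ operation interacts with the weight filtration exactly via the Koszul-sign rule used to derive Macdonald-type product formulas. A secondary point requiring care is that the cancellation of the second factor against $\HD_{\Sym^{\bd}(X)}$ is only exact \emph{in the $(u,v)$-adic limit} --- for finite $\bd$ the symmetric-power truncation $\Sym^{d_i}_{gr}$ differs from the full $\Sym^\bullet_{gr}$ in degrees exceeding $d_i$, so one must check that, in any fixed total degree, these truncation discrepancies disappear once the $d_i$ are large enough, which is exactly the content of the degeneration range supplied by Theorem~\ref{T:getvan} together with homological stability (Theorem~\ref{theorem:stable}, or directly the vanishing range in Theorem~\ref{theorem:coho}). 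Granting these two verifications, the corollary follows formally, and the claimed dependence only on $mn$ and on the mixed Hodge structure of $H^\ast(X;\Q)$ is immediate from the final product formula.
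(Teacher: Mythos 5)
Your proposal hinges on invoking Theorem~\ref{T:getvan} to get degeneration of the Leray spectral sequence on the $E_2$ page, but this is wrong for two related reasons, and it misses the key idea that makes the corollary hold in full generality.

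First, Theorem~\ref{T:getvan} simply does not apply to all smooth complex varieties with $mn\ge 2$: its hypothesis requires the specific map $H^*(Z;\Q)\to H^*(X;\Q)$ to kill the ``diagonal'' class, and the remark following that theorem only verifies this under extra numerical assumptions ($mn>2$ for affine $X$, or $mn>\dim X$ in general). Indeed, the paper's own example of the punctured torus $T^2-*$ with $mn=2$ explicitly shows the spectral sequence does \emph{not} degenerate: the limiting Poincar\'e-polynomial ratios for $(m,n)=(1,2)$ and $(m,n)=(2,1)$ differ, which is possible only because the differential $d_{2r(mn-1)}$ is nonzero in the limit. Your hedge that ``for each fixed cohomological degree $k$ the relevant differentials eventually vanish as the $d_i$ grow'' is therefore false in general; Theorem~\ref{theorem:stable} says the differentials \emph{stabilize}, not that they vanish. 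Second, the whole point of Theorem~\ref{theorem:second} (of which this corollary is part) is that it \emph{avoids} the degeneration hypothesis that Theorem~\ref{theorem:main} needs, precisely because $\chi$ and $\HD$ are Euler-characteristic-type invariants. The paper's actual argument does not attempt to degenerate the spectral sequence: it decomposes the $E_*$-pages of mixed Hodge structures into weight pieces, cites Deligne (Corollaire 3.2.15 of \cite{De}) for the finiteness of each weight piece, and then applies Hopf's theorem one weight at a time to conclude $\HD_{E_2}=\HD_{E_\infty}=\HD_{H^*}$ \emph{regardless} of which differentials survive. That is the missing idea in your proposal. Your second paragraph --- the translation of Statement~\ref{cohoodd3} into the displayed product formula, and the cancellation of the $\bigotimes_i\Sym_{gr}^{\ast}H^*(X;\Qb[0])$ factor against $\HD_{\Sym^{\bd}(X)}$ in the adic limit --- is correct and essentially matches what the paper does, but it is moot until the first step is fixed by replacing degeneration with the weightwise Hopf-theorem argument.
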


\begin{proof}[Proof of Corollary~\ref{cor:HDpoly}]
We first claim that
\begin{equation}
\label{eq:mixedhodge0}
\lim_{\bd\to\infty}\HD_{\Poly^{\bd}_n(X)}(u,v)
=\lim_{\bd\to\infty}\HD_{E_2(X,\bd,n)}(u,v)
\end{equation}
in $\Z[[u,v]]$.  To see this, first note that the associated graded (with respect to the two filtrations of the mixed Hodge structure) spectral sequence $E^{p,q}_*(X,\bd,n)$ breaks up as a direct sum of spectral sequences according to weights $(r,s)$.  Corollary 3.2.15 of \cite{De} implies that for any fixed weights $(r,s)$, only finitely many terms of the $E_2$ page of the $(r,s)$-weight part of $E^{p,q}_2(X,\bd,n)$ are nonzero.  Thus the $E_\infty$ page of this part equals the $E_N$ page for some $N$.  We can thus apply Hopf's theorem for finite chain complexes, that the alternating sums of the ranks of the $i$-chains equals the corresponding sum for the ranks of the $i$-dimensional homology groups.  Apply this to each page gives the claim for each weight $(r,s)$.  Applying this one weight at a time, the definition of the adic topology on $\Z[[u,v]]$ gives \eqref{eq:mixedhodge0}.

Equation \eqref{eq:mixedhodge0} implies that :

\begin{equation}
\label{eq:mixedhodge5}
 \lim_{\bd\rightarrow\infty}\frac{\HD_{\Poly^{\bd}_n(X)}(u,v)}{\HD_{\Sym^{\bd}(X)}(u,v)}=
  \lim_{\bd\rightarrow\infty}\frac{\HD_{E_2(X,\bd,n)}(u,v)}{\HD_{\Sym^{\bd}(X)}(u,v)}
  \end{equation}

We thus need to understand $\HD_{E^{p,q}_2(X,\bd,n)}(u,v)$.  We will do this by quoting Statement \ref{cohoodd3}
of Theorem~\ref{theorem:coho}.  We will build up to this, starting with a general statement.

Let $V$ be a graded vector space endowed with a rational mixed Hodge structure.  Then we can write the associated graded of $V$ with respect to the two filtrations as :
\begin{equation}
\label{eq:mixedhodge1}
\gr V=\bigoplus_{p,q\geq 0}\bigoplus_{i\geq 0}(\Q(p,q)[i])^{v_{p,q,i}}
\end{equation}
for some $v_{p,q,i}\geq 0$.  The mixed Hodge structure on $V$ induces a mixed Hodge structure on the symmetric algebra $\Sym^*(V)$.  Its Hodge-Deligne polynomial can be written :
\begin{equation}
\label{eq:mixedhodge2}
\HD_{\Sym^*(V)}(u,v)=\prod_{p,q\geq 0}\prod_{i\geq 0}(1-(-1)^iu^pv^q)^{-(-1)^iv_{p,q,i}}.
\end{equation}

We now apply this general reasoning to
\[V=H^*(X;\Qb[2r(mn-1)-1](r(mn-1),r(mn-1))).\]
To this end, we first need to express $V$ as in \eqref{eq:mixedhodge1}.  Keeping track of weights and degrees, and using the fact that $h^{p,q}(X)=0$ for $p<0$ or $q<0$, we obtain:

\[
\begin{array}{ll}
V&=H^*(X;\Qb[2r(mn-1)-1](r(mn-1),r(mn-1))\\
&\\
& =\bigoplus_{p,q\geq 0}\bigoplus_{i=0}^{2\dim X}h^{p-r(mn-1),q-r(mn-1)}(H^i(X;\Q)[2r(mn-1)-1])\\
&\\
&=\bigoplus_{p,q\geq r(mn-1)}\bigoplus_{i=2r(mn-1)-1}^{2\dim X+2r(mn-1)-1}(\Q(p,q)[i])^{h^{p-r(mn-1),q-r(mn-1), i-2r(mn-1)-1}(X)}
\end{array}
\]

Now Statement \ref{cohoodd3} of Theorem~\ref{theorem:coho} gives $E_2^{p,q}(X,\bd,n)$ as a tensor product of two symmetric algebras.  As $\bd\to\infty$, the Hodge-Deligne polynomial of the first of these algebras converges to $\HD_{\Sym^*(V)}(u,v)$, while for the second of these algebras
the Hodge-Deligne polynomial converges to $(\HD_{\Sym^\infty(X)}(u,v))^m$.  Since
\[\HD_{U\otimes W}(u,v)=\HD_U(u,v)\HD_W(u,v)\]
for mixed Hodge structures $U$ and $W$, the right-hand side of \eqref{eq:mixedhodge5}
equals $\HD_{\Sym^*(V)}(u,v)$.   Plugging the formula we just obtained for $V$ in \eqref{eq:mixedhodge2} gives:
  \[\HD_{\Sym^*(V)}(u,v)= \prod_{p,q\geq r(mn-1)}\prod_{i=2r(mn-1)-1}^{i=2\dim X+2r(mn-1)-1}(1-(-1)^iu^pv^q)^{-(-1)^ih^{p-r(mn-1),q-r(mn-1), i-2r(mn-1)-1}(X)}\]
  which by re-indexing equals
  \[=\prod_{p,q\geq 0}\prod_{i=0}^{2\dim X}(1-(-1)^{i+2r(mn-1)-1}u^{p+r(mn-1)}v^{q+r(mn-1)})^{-(-1)^{i+2r(mn-1)-1}h^{p,q,i}(X)}
\]
thus giving the theorem.

\end{proof}

\section{The poset of colored $n$-equals partitions and its homology}
\label{section:nequals}

The goal of this section is to prove Proposition~\ref{proposition:kunneth} and Theorem \ref{thm:EL} below.  These purely combinatorial results are the first of three main ingredients in our proof of Theorem \ref{theorem:coho}.

Fix throughout this section an integer $m\geq 1$ and $m$ colors.  Let $D$ be a {\em finite colored set}; that is, a finite set $D$ and function $D\to \{1,\ldots ,m\}$ which we think of as ``coloring'' each point of $D$ by one of the colors $1,2,\ldots ,m$.  Let $D(i)$ denote the subset of $D$ consisting of all elements with color $i$. For any set $S$ denote the cardinality of $S$ by $|S|$.    Let
\[
  S_{D}:=\{\textrm{color preserving self-bijections of $D$} \}
  \]

\begin{definition}[{\bf \boldmath$n$-equals partition}]
\mbox{}
    \begin{enumerate}
    \item Fix $m$ colors.  Let $D$ be a finite colored set.  A partition of $D$ is an {\em $n$-equals} partition if each block of the partition either has size $1$, or contains at least $n$ elements of each of the $m$ colors.
            \item Denote by $\Pi^D_n$ the poset of $n$-equals partitions of the colored set $D$, ordered by refinement: $I\leq J$ if and only if $I$ refines $J$.
    \end{enumerate}

    \end{definition}

For $m=1$, the lattice $\Pi^d_n$ has been intensely studied by Bj\"orner and his collaborators, under the name of ``$n$-equals'' arrangement. Recall that elements of $\Pi^d_n$ are partitions $I$ of $\{1,\ldots,d\}$ such that all blocks in the partition have size $1$ or size at least $n$.

We record the following elementary observation.   For a poset $P$, denote by
$P(\leq x)$ the subposet of $P$ consisting of all elements of $P$ that are $\leq x$.

\begin{lemma}
\label{L:fcprod1}
Let $J\in \Pi^D_n$.  Let $J_1,\ldots ,J_k$ denote the blocks of the partition $J$.
Let $\stab_J\subset S_D$ be the stabilizer of $J$.
There exists a $\stab_J$-equivariant isomorphism of posets
\[ \Pi_n^D(\leq J)\cong \prod_i\Pi_n^{J_i}.\]
\end{lemma}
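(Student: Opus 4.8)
The plan is to write down the isomorphism explicitly and then check it is order-preserving, bijective, and $\stab_J$-equivariant. First I would observe that an element $I \in \Pi^D_n$ with $I \leq J$ is, by definition of the refinement order, a partition of $D$ each of whose blocks is contained in a single block $J_i$ of $J$. Hence $I$ restricts to a partition $I|_{J_i}$ of each $J_i$, and conversely any tuple of partitions $(I_i)_{i=1}^k$ with $I_i$ a partition of $J_i$ glues to a partition $I := \bigsqcup_i I_i$ of $D$ with $I \leq J$. The map
\[
  \Phi\colon \Pi^D_n(\leq J) \longrightarrow \prod_{i=1}^k \mathrm{Part}(J_i), \qquad I \longmapsto (I|_{J_1}, \ldots, I|_{J_k})
\]
is therefore a bijection onto the product of the full partition posets of the $J_i$, with inverse the gluing map just described; both $\Phi$ and $\Phi^{-1}$ are evidently monotone since the refinement relation for $I \leq I'$ (both $\leq J$) holds iff it holds blockwise on each $J_i$.

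The next step is to identify the image more precisely: I claim $I \leq J$ lies in $\Pi^D_n$ (i.e.\ is itself an $n$-equals partition) if and only if each restriction $I|_{J_i}$ is an $n$-equals partition of the colored set $J_i$. This is immediate from the definition of $n$-equals: a block $B$ of $I$ is contained in some $J_i$, and whether $B$ has size $1$ or contains $\geq n$ elements of each color is a condition intrinsic to $B$, hence unaffected by viewing $B$ inside $D$ versus inside $J_i$. So $\Phi$ restricts to a poset isomorphism $\Pi^D_n(\leq J) \cong \prod_i \Pi^{J_i}_n$, where $J_i$ carries the coloring inherited from $D$.

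Finally I would check $\stab_J$-equivariance. An element $\sigma \in \stab_J \subseteq S_D$ permutes the blocks $J_1, \ldots, J_k$ among themselves, inducing a permutation $\bar\sigma \in S_k$ together with color-preserving bijections $\sigma_i\colon J_i \to J_{\bar\sigma(i)}$; these generate the natural action on $\prod_i \Pi^{J_i}_n$ (permuting factors via $\bar\sigma$ and transporting partitions via the $\sigma_i$), and one checks $\Phi(\sigma \cdot I) = \sigma \cdot \Phi(I)$ directly from the definitions since $(\sigma \cdot I)|_{J_{\bar\sigma(i)}} = \sigma_i(I|_{J_i})$. I do not expect any real obstacle here: the statement is essentially the observation that the interval below $J$ in a partition lattice is a product of partition lattices, restricted to the $n$-equals sublattice and tracked through the colored-set formalism; the only thing requiring a moment's care is verifying that the $n$-equals condition is genuinely blockwise (so that the restriction lands in $\Pi^{J_i}_n$ and not merely in the ambient partition poset), which follows from its local, per-block nature.
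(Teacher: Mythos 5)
Your argument is correct and is essentially the paper's proof, just written out in full: the paper simply observes that refinements of $J$ are equivalent to choices of $n$-equals partitions of each block $J_i$, which is precisely your restriction/gluing bijection together with the blockwise nature of the $n$-equals condition. The explicit check of $\stab_J$-equivariance is a reasonable addition but contains no new idea.
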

\begin{proof}
Refinements of $J$ are equivalent to a choice of $n$-equal partitions of $J_i$ for each $i$.
The claim follows.
\end{proof}

We will need the following.
\begin{definition}[{\bf \boldmath$\hat{0}$ and \boldmath$\hat{1}$}]
    Given a poset $P$ with an initial object $\hat{0}$ and a terminal object $\hat{1}$, define $\overline{P}:=P\setminus\{\hat{0},\hat{1}\}$.
\end{definition}

\begin{definition}[{\bf The order complex \boldmath$\Delta(P)$ of a poset \boldmath$P$}]
For a poset $P$, the {\em order complex} $\Delta(P)$ associated to $P$ is the simplicial complex whose $k$-simplices are the chains $x_0<x_1<\cdots <x_k$  (i.e. the totally ordered subsets of $P$).
\end{definition}

\begin{convention}
\label{convention:empty}
Note that if $P$ is the poset with two elements ($\hat{0}, \hat{1})$ then $\overline{P}$
is empty, so $\widetilde{H}_*(\Delta(\overline{P});\Z) = \Z$ in degree $-1$. As a special
convention, if $P$ is the poset with one element then we will say that $\widetilde{H}_*(\Delta(\overline{P});\Z) = \Z$ in degree $-2$.
\end{convention}

\subsection{EL-shellability of the colored $n$-equals lattice}

We quickly recall the theory of lexicographic shellability, first developed by Bj\"{o}rner-Wachs; see, e.g., Section 5 of \cite{BW}.

For a poset $P$ let $E(P)$ denote the set of {\em edges} of $P$, i.e. pairs $a,b\in P$ with $a<b$ and no $c$ such that $a<c<b$.  For $a\leq b$ in $P$, the {\em (closed) interval} is defined as
$[a,b]:=\{x\in P : a\leq x\leq b\}$; the open interval $(a,b)$ is defined similarly.  The poset $P$ is {\em bounded} if it has a greatest element $\hat{1}$ and a least element $\hat{0}$.  A {\em chain} of length $r$ in $P$ is a string $a_0<\cdots <a_r$ with $a_i\in P$.  A chain is {\em maximal} if
it is not a proper subchain of any chain in $P$.  A chain $a_1<\cdots <a_r$ is {\em unrefinable} if it is maximal in the interval $[a_1,a_r]$.

\
An {\em edge-labelling} of $P$ is a map $\lambda:E(P)\to \Lambda$ for some poset $\Lambda$. Given an edge-labelling $\lambda$, for a chain $c$ of length $r$  we write $\lambda(c)\in \Lambda^r$ for the ordered tuple of the labels of the edges of $c$. A {\em rising chain} in an
 interval $[a,b]$ is a chain $c$ with the property that it is a maximal chain  in $[a,b]$  with  $\lambda(c)=(\ell_1,\ldots ,\ell_r)$ satisfying
$\ell_1<\cdots <\ell_r$ in $\Lambda$.

\begin{definition}[{\bf EL-labelling and EL-shellability}]
An edge-labelling $\lambda$ of a poset is called an {\em EL-labelling}
if :
\begin{enumerate}
\item Every 
interval 
$[a,b]$
has a unique rising chain $c$, and
\item this unique rising chain is lexicographically strictly first among maximal chains: $\lambda(c)<\lambda(c')$ for all other maximal chains $c'$ in $[a,b]$.
\end{enumerate}

A bounded poset that admits an EL-labeling is called {\em EL-shellable}.
\end{definition}

The property of EL-shellability is preserved by several standard properties of posets.  In particular, as given in Theorem 10.16 of \cite{BW2}: if $P$ and $Q$ are bounded posets then $P$ and $Q$ are EL-shellable if and only if $P\times Q$ is EL-shellable.
The importance of EL-shellability comes from a theorem of Bj\"{o}rner-Wachs stating that if a bounded poset $P$ is EL-shellable then $\Delta(\overline{P})$ has the homotopy type of a wedge of spheres, indexed as follows.

\begin{definition}[{\bf Falling chains}]
    Let $P$ be a poset with an edge-labeling.  A chain $a_0<\cdots <a_r$ of $P$ is \emph{falling} if it is maximal
     and for all $0< i \leq r$  the label $\ell_i$ of $a_{i-1} <a_i$ is not less than the label $\ell_{i+1}$ of $a_{i} <a_{i+1}$.
\end{definition}

\begin{lemma}[Theorem 5.9 of \cite{BW}]
\label{L:BW}
If a poset $P$ is $EL$-shellable then
   \begin{equation*}
        \widetilde{H}_r(\Delta(\overline{P});\Zb)\cong \Zb^{\{\text{falling chains of $P$ of length }r+2\}} .
   \end{equation*}
  \end{lemma}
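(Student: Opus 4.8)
The plan is to show that the EL-labelling $\lambda$ of the bounded poset $P$ induces a (non-pure) shelling of the order complex $\Delta(\overline{P})$, and then to invoke the Bj\"orner--Wachs structure theorem that a shellable complex is homotopy equivalent to a wedge of spheres, one per \emph{homology facet}, identifying the homology facets with the falling chains of $P$. Set up the dictionary first. The facets of $\Delta(\overline{P})$ are the faces $\{x_1<\cdots<x_{r+1}\}$ coming from maximal (equivalently, saturated) chains $\hat 0=x_0<x_1<\cdots<x_{r+1}<x_{r+2}=\hat 1$ of $P$; such a facet is an $r$-simplex and corresponds to a maximal chain of length $r+2$. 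Writing $\lambda_j:=\lambda(x_{j-1},x_j)$ for the labels along a maximal chain $c$, order all maximal chains of $P$ by any linear extension of the lexicographic order on their label sequences $(\lambda_1,\lambda_2,\ldots)$; this is the candidate shelling order. Define the \emph{restriction set} $\mathcal{R}(c)$ to be the set of interior vertices $x_i$ ($1\le i\le r+1$) at which the labels do not ascend, i.e.\ $\lambda_i\not<\lambda_{i+1}$; thus $c$ is a falling chain exactly when $\mathcal{R}(c)$ is the whole set of interior vertices of $c$, while $\mathcal{R}(c)=\emptyset$ exactly when $c$ is rising (so the lexicographically first maximal chain, which is $F_1$ in the shelling, is the unique rising chain).

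The core of the argument is to verify that this is a shelling with restriction map $\mathcal{R}$: for each maximal chain $c$, the faces of $c$ lying in no lexicographically earlier facet are precisely those containing $\mathcal{R}(c)$. For one inclusion, if a face $G\subseteq c$ omits some $x_i\in\mathcal{R}(c)$, then in the interval $[x_{i-1},x_{i+1}]$ the length-two chain through $x_i$ is not rising, so by the EL property that interval's unique rising chain $\rho$ differs from it and lexicographically precedes it; splicing $\rho$ into $c$ in place of $x_i$ gives a maximal chain $c'\ne c$ with $c'<_{\mathrm{lex}}c$ and $G\subseteq c'$, so $G$ lies in the earlier facet $c'$. (Here $\rho$ may be longer than two steps, so $c'$ may be longer than $c$; tolerating this is exactly why one needs \emph{non-pure} shellability.) For the other inclusion, suppose $\mathcal{R}(c)\subseteq G\subseteq c$ and also $G\subseteq c'$ for some maximal chain $c'\ne c$; one shows $c<_{\mathrm{lex}}c'$. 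Let $x_i$ be the first interior vertex of $c$ not on $c'$ and $x_j$ ($j>i$, possibly $x_j=\hat 1$) the next vertex of $c$ that is on $c'$. Since $\mathcal{R}(c)\subseteq G\subseteq c'$, none of $x_i,\ldots,x_{j-1}$ lies in $\mathcal{R}(c)$, so $\lambda_i<\lambda_{i+1}<\cdots<\lambda_j$; hence the restriction of $c$ to $[x_{i-1},x_j]$ is the rising chain of that interval, which by the EL property is lexicographically first among its maximal chains, in particular strictly before the restriction of $c'$. As $c$ and $c'$ share the covers below $x_{i-1}$, this forces $c<_{\mathrm{lex}}c'$. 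This establishes the shelling.

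Now apply the Bj\"orner--Wachs structure theorem for (non-pure) shellable complexes. Building $\Delta(\overline{P})$ facet by facet in the shelling order, a facet $F$ with $\mathcal{R}(F)\subsetneq F$ is attached to the previously built subcomplex along a collapsible proper subcomplex of $\partial F$ and so changes nothing up to homotopy, while a homology facet (one with $\mathcal{R}(F)=F$) is attached along all of $\partial F$ and contributes a sphere of dimension $\dim F$; the upshot is that $\Delta(\overline{P})$ is homotopy equivalent to a wedge of such spheres. A facet $F$ is a homology facet exactly when every interior vertex of the corresponding maximal chain is a non-ascent, i.e.\ when that chain is falling; and a falling chain of length $r+2$ yields such an $F$ of dimension $r$. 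Hence $\widetilde{H}_r(\Delta(\overline{P});\Zb)\cong\Zb^{\{\text{falling chains of }P\text{ of length }r+2\}}$. The trivial posets are covered by Convention~\ref{convention:empty}: for $P$ with two elements (resp.\ one element) the unique maximal chain is vacuously falling of length $1$ (resp.\ $0$), matching $\widetilde{H}_{-1}=\Zb$ (resp.\ the stated degree-$(-2)$ convention).

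The main obstacle is the second inclusion in the shelling verification: controlling an arbitrary competing maximal chain $c'$ requires using both halves of the EL axiom --- uniqueness of the rising chain of each interval and its lexicographic minimality among all maximal chains of that interval --- and the bookkeeping must survive $P$ being non-graded, so that maximal chains (and their spliced replacements) can have varying lengths. A secondary point requiring care is obtaining the \emph{exact} homology rather than a mere rank bound, which depends on the wedge-of-spheres decomposition for non-pure shellable complexes being a genuine homotopy statement; this is precisely the content of the Bj\"orner--Wachs theorem invoked.
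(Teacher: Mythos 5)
The paper does not give a proof of Lemma~\ref{L:BW}; it is imported verbatim as a citation to Theorem~5.9 of Bj\"orner--Wachs \cite{BW}. Your proposal reconstructs the standard Bj\"orner--Wachs argument: order the facets of $\Delta(\overline{P})$ (maximal chains of $\overline{P}$) by lex order on the label strings of the corresponding maximal chains of $P$, take the restriction map $\mathcal{R}(c)$ to be the non-ascent set, verify the shelling axiom via the two halves of the EL property, and conclude by identifying the homology facets with the falling chains. This is the right approach and the proof is essentially correct.

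Two points worth noting. First, your argument is not fully self-contained: the step ``shellable $\Rightarrow$ homotopy equivalent to a wedge of spheres indexed by homology facets'' is itself Theorem~5.8 of \cite{BW} for non-pure complexes, and you invoke it as a black box; so the proof still ultimately rests on Bj\"orner--Wachs. Second, in the ``other inclusion'' you pass from ``the restriction of $c$ to $[x_{i-1},x_j]$ is lex-first among maximal chains of that interval, in particular $<_{\mathrm{lex}}$ the restriction of $c'$'' to ``$c<_{\mathrm{lex}} c'$ overall.'' This is true but deserves a sentence of care: since the poset need not be graded, the two chains' restrictions to $[x_{i-1},x_j]$ may have different lengths, and you need to rule out the possibility that one label string is a prefix of the other (which the EL lex-first property does, since a prefix is lex-smaller and the rising chain is strictly first). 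With that gloss the proof is complete. The handling of the degenerate cases via Convention~\ref{convention:empty} at the end is correct.
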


The main result of this subsection is the following.

\begin{theorem}[{\bf  \boldmath$\Pi^{D}_n$ is EL-shellable}]
\label{thm:EL}
    The poset $\Pi^{D}_n$ is EL-shellable.
\end{theorem}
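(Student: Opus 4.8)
The plan is to construct an explicit edge-labelling $\lambda:E(\Pi^D_n)\to\Lambda$ for a suitable totally ordered (or partially ordered) label set $\Lambda$, and then verify the two EL-axioms directly. Since $\Pi^D_n$ already has a $\hat 0$ (the partition into singletons) and a $\hat 1$ (the one-block partition, provided $D$ has enough elements of each color — and when it does not, $\Pi^D_n$ is just a point or two-element chain, handled by Convention~\ref{convention:empty} and a degenerate case), it is bounded, so EL-shellability makes sense. The first step is to understand the covering relations (edges): an edge $I\lessdot J$ in $\Pi^D_n$ either merges several singleton blocks of $I$ into one new non-singleton block of $J$ (an ``$n$-equals'' block, needing $\geq n$ elements of each color), or merges one non-singleton block with several singletons, or merges two or more non-singleton blocks together (possibly also absorbing singletons). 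This is more intricate than the classical partition lattice, where every cover merges exactly two blocks; here a single cover can involve many blocks at once because of the size-$n$-per-color constraint, and this is exactly the source of the extra combinatorial complexity the introduction warns about.

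The key idea I would borrow is the Bj\"orner--Wachs labelling of the $n$-equals arrangement lattice $\Pi^d_n$ (the $m=1$ case), which is known to be EL-shellable, and adapt it to the colored setting. Fix once and for all a total order on the colored set $D$ (say, order $D(1)$ before $D(2)$ before $\cdots$, and linearly order within each color). For a covering step $I\lessdot J$, record the set of blocks of $I$ that get merged; the label should encode (i) which ``type'' of merge occurs — e.g. first all the ``singletons $\to$ new block'' moves, then moves absorbing into or merging existing non-singleton blocks, ordered by some priority — and (ii) a numerical tie-breaker such as the largest (or an appropriate sorted tuple of the) minimal elements of the merged blocks, as in the standard atom-ordering trick for geometric-type lattices. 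Concretely I expect $\Lambda$ to be something like a product of a small poset indexing merge-types with a tuple of elements of $D$ ordered lexicographically. Using Lemma~\ref{L:fcprod1}, the lower interval $\Pi^D_n(\leq J)$ factors as $\prod_i\Pi^{J_i}_n$, and by the Bj\"orner--Wachs product theorem (Theorem 10.16 of \cite{BW2}, quoted in the excerpt) EL-shellability is compatible with products; so it suffices to handle intervals $[\hat 0, J]$ where $J$ has a single non-singleton block, and by induction on $|D|$ together with this product decomposition one reduces the verification of the rising-chain axioms to a manageable local statement about one block of size $\geq n$ in each color.

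Thus the steps, in order: (1) reduce to the case $\hat 1 = $ one block and dispose of small/degenerate $D$ via the convention; (2) classify the covers and define $\lambda$ on edges; (3) prove existence and uniqueness of a rising maximal chain in every interval $[a,b]$ — the natural candidate is the chain that greedily performs the lexicographically-least available merge at each stage; (4) prove this rising chain is lexicographically first among all maximal chains of $[a,b]$; (5) invoke Lemma~\ref{L:fcprod1} and the product theorem to bootstrap from one-block intervals to arbitrary intervals. I expect step (3)–(4) to be the main obstacle: because a single cover can amalgamate arbitrarily many blocks (unlike the rank-additive, two-blocks-at-a-time behavior of the classical partition lattice, which is why that case is ``pure'' as the introduction notes), one must argue carefully that the greedy rising chain exists, is unique, and beats every competitor in every interval — including intervals that are not themselves of product form until one peels off a block. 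The bookkeeping for the tie-breaking tuples and the interaction between ``create a new block'' and ``grow an existing block'' moves is where the real work lies; the product reduction and the small-case analysis are routine by comparison.
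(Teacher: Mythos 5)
Your outline follows the same broad strategy as the paper (an explicit labelling generalizing the Bj\"orner--Wachs labelling of the uncolored $n$-equals lattice, a classification of covers into block creation, singleton absorption and block merging, and use of the factorization of Lemma~\ref{L:fcprod1}), but as it stands it has a genuine gap: the two items that constitute essentially all of the content of the theorem --- the precise definition of the edge-labelling and the verification of the rising-chain axioms --- are exactly the parts you defer as ``where the real work lies.'' The label set you guess (merge-type together with minimal elements of the merged blocks, ``as in the standard atom-ordering trick for geometric-type lattices'') is unlikely to work: $\Pi^D_n$ is not graded when $mn>2$ (a single cover can amalgamate many blocks), so geometric-lattice-style labelings do not apply, and the labelling that does work is delicate --- block-merge labels must sit strictly below all creation and absorption labels, the label for absorbing a color-$i$ singleton must be padded in coordinates $i+1,\dots,m$ by the maxima $d_{i+1},\dots,d_m$, and ``low'' color-$1$ singleton adds need an $\epsilon$-perturbation so that they sort strictly below the creation label of the resulting block. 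Without choices of this kind the unique-rising-chain property already fails in terminal intervals $[x,\hat 1]$ where $x$ has one non-singleton block, which is precisely where the paper spends most of its effort (forced order of low adds before high adds, by color and by size, plus a separate analysis of the forced order of block merges when $x$ has several non-singleton blocks).

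In addition, your step (5) misuses the product theorem. EL-shellability is a property of a single global edge-labelling, to be verified on every interval of $\Pi^D_n$; Theorem 10.16 of \cite{BW2} says that a product of EL-shellable posets admits \emph{some} EL-labelling, and it does not say that your global $\lambda$ restricts to an EL-labelling on an interval $[x,w]\cong\prod_\alpha[x_\alpha,\hat 1]$. What must be proved instead (and what the paper does in its ``general intervals'' step) is a compatibility statement for the chosen labelling: labels of edges modifying distinct blocks $D_\alpha$ of $w$ are never equal (the first coordinate of a label is an element of a block being modified), the label poset is totally ordered, and hence a rising chain of $[x,w]$ is exactly a uniquely determined interleaving of the rising chains of the factors $[x_\alpha,\hat 1]\subset\Pi^{D_\alpha}_n$, with the lexicographically-first (indeed, smallest-first-edge) property inherited from the factors. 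So the passage from one-block intervals to general intervals is an argument about your specific labelling, not a citation; until the labelling is pinned down and both this compatibility and the terminal-interval case analysis are actually carried out, the proof is incomplete.
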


\begin{remark}
Bj\"orner--Wachs \cite[Theorem 6.1]{BW} give an EL-labelling for $\Pi^d_n$ for a single $d$ (i.e. in the case $m=1$).  By refining their construction, we give an EL-labelling of $\Pi^{D}_n$ in all cases $m\ge 1$.  Our labelling (and proof) reduces to theirs in the case $m=1$.  The case $m>1$ is considerably more delicate.
\end{remark}

\begin{proof}[Proof of Theorem~\ref{thm:EL}]
If $m=n=1$, note $\Pi^{D}_n$ is the same as if $m=1$ and $n=2$, so without loss of generality, we assume $mn\geq 2$.
First, we determine all the edges in $\Pi^{D}_n$.
     Extending Bj\"orner--Wachs, we  introduce three types of edges in $\Pi^{D}_n$, which we will show are exhaustive.
Whenever we write $a_i\in D$ it denotes that $a_i$ is an element of $D$ of color $i$.

\bigskip
\noindent
{\bf Block creation: }   A new non-singleton block $B$
with $n$ elements each of the $m$ colors is created from singletons.  Let $B_i$ be the set of elements of color $i$ in the block.

\medskip
\noindent
{\bf Singleton adding: }A singleton block $\{a_i\}$ is merged with a non-singleton block.

\medskip
\noindent
{\bf Block merging: } Two non-singleton blocks $B$ (whose subset of color $i$ elements is $B_i$) and $C$  (whose subset of color $i$ elements is $C_i$)  are merged.

\bigskip
    By induction, we can show that the first two types of edges are sufficient to generate every element of $\Pi^{D}_n$. However,
 the first two types of edges do not give all edges.

  To see that the 3 types above exhaust the list of edges, suppose that $I<J$ is an edge.  Consider a non-singleton block of $J$ that is not a block of $I$.  The block of $J$ is either (1) entirely singletons in $I$, (2) contains a non-singleton block of $I$ and a singleton of $I$, or (3) contains at least two non-singleton blocks of $I$.  In each case, there exists $I'$ with $I< I' \leq J$ and $I<I'$ of the corresponding type, and so $I'=J$ since $I<J$ is an edge.

    For each $1\leq i\leq m$, we pick a linear ordering on $D(i)$ so that it is an ordered set. Let $d_i:= \max D(i)$.
We let $\overline{D(1)}$ be an isomorphic copy of the ordered set $D(1)$ with elements $\bar{a}$ for $a\in D(1)$.
Let $D(1)^\epsilon$ be the ordered set whose elements at $a\in D(1)$ and $a-\epsilon$ for $a\in D(1)$ with the obvious ordering
($a-\epsilon<a$ and if $a>b$ for $a,b\in D(1)$, then $a-\epsilon>b$ ).
   Let $\vec{\Lambda}$ be the poset
    \begin{equation*}
        \vec{\Lambda}:= \overline{D(1)} \sqcup D(1)^\epsilon \times D(2) \times D(3) \times\ldots\times D(m),
    \end{equation*}
    where $D(1)^\epsilon\times\ldots\times D(m)$ is ordered lexicographically, and where $\bar{a}<\gamma$ for all $\bar{a} \in \overline{D(1)}$ and all $\gamma\in D(1)^\epsilon\times\ldots\times D(m)$.

    We now define an edge-labeling of $\Pi^{D}_n$ with labels in $\vec{\Lambda}$:
    \begin{enumerate}
        \item For block creation of a block $B$, we assign the label $(\max B_1, \ldots,\max B_m)$.
        \item For adding a singleton $a_i$ of color $i>1$ to a block $B$, we assign the label
        \begin{align*}
                \lambda(B\cup\{a_i\})&:=(\max B_1,\ldots,\max B_{i-1},a_i,d_{i+1},\ldots,d_m).\intertext{For adding a singleton $a_1$, we assign the label}
                \lambda(B\cup\{a_1\})&:=(a_1-\delta \epsilon,d_2,\ldots,d_m)
            \end{align*}
            where $\delta=0$ if $a_1>\max B_1$ and $\delta=1$ if $a<\max B_1$.
        \item For block merging, we assign the label $\overline{\max\{B_1\cup C_1\}}$.
    \end{enumerate}

    We now prove that this edge-labeling is an $EL$-labeling.
Note that for an edge-labeling to be an EL-labeling, it suffices to require that every interval $[x,w]$ has a unique rising chain, and that the first edge $x<y$ of that rising chain has label less than any other   edge $x<z$ with $z \leq w$.
We will prove this stronger condition of our edge-labeling.
        We first prove this  condition for terminal intervals $[x,\hat{1}]$ in $\Pi^{D}_n$, and then prove it for general intervals.

\bigskip
\noindent
    {\bf Step 1: Terminal Intervals.}
    \noindent
        Let $x\in\Pi^{D}_n$ be any element.  We will show the condition above for $[x,\hat{1}]$.
        We will consider three cases based on the number of non-singleton blocks of $x$.

{\bf Case I:} We first consider the case that $x$ has a single non-singleton block $B$.   By our edge-labeling, in a rising chain, block merging can never occur after block creation.  Yet, any maximal chain of $[x,\hat{1}]$ that creates a block must later merge it with the block of $x$.  Thus, any rising chain  in $[x,\hat{1}]$ can only consist of adding singletons.

        \begin{claim}\label{addsing}
            There is a unique rising order in which singletons can be merged to $B$.
        \end{claim}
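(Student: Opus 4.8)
The plan is to pin down the unique rising chain explicitly and then show that no other ordering of singleton additions can be rising; the argument runs by induction on the number of colors. Write $B_i$ for the color-$1$ part... more precisely, write $B_i$ for the color-$i$ part of $B$, put $c:=\max B_1$, and call a color-$1$ singleton $a\in D(1)\setminus B_1$ \emph{deficient} if $a<c$ and \emph{dominant} if $a>c$ (no equality occurs, since $c\in B_1$). The candidate rising order is: the deficient color-$1$ singletons in increasing order; then the (inductively unique) rising order for merging the color-$\geq 2$ singletons into $B_2\sqcup\cdots\sqcup B_m$, regarded as a single non-singleton block of the $(m-1)$-color poset; then the dominant color-$1$ singletons in increasing order.

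First I would analyze the first coordinates of the labels. By the edge-labelling, a deficient color-$1$ addition has first coordinate $a-\epsilon$, which lies strictly below $c$ in $D(1)^\epsilon$; every color-$j$ addition with $j\geq 2$ has first coordinate equal to the current value of $\max B_1'$, which is always $\geq c$ because the current block contains $B$; and every dominant color-$1$ addition has first coordinate $a-\delta\epsilon>c$. Since first coordinates along a rising chain are weakly increasing, all deficient color-$1$ additions must occur first, and — their labels sharing the constant tail $(d_2,\dots,d_m)$ — they must be performed in increasing order of the element. The same reasoning, applied to two dominant color-$1$ additions (again labels with equal tails, hence compared by first coordinate), forces them to be performed in increasing order of the element; in particular each of them, when performed, exceeds the current color-$1$ maximum, so its label is exactly $(a,d_2,\dots,d_m)$ and $a$ becomes the new maximum.

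Next I would show that along a rising chain no color-$\geq 2$ addition can follow a dominant color-$1$ addition. Assuming otherwise, choose the color-$j$ addition that is the first one occurring after any dominant color-$1$ addition; its immediately preceding edge must then be a dominant color-$1$ addition (it is not deficient, as those all come first, and not a color-$\geq 2$ addition, by minimality). By the previous step that edge carries the label $(\gamma,d_2,\dots,d_m)$, where $\gamma$ is precisely the color-$1$ maximum it produces, which is also the first coordinate of the color-$j$ label $(\gamma,\tau)$. But each entry of $\tau$ is bounded by the corresponding $d_k$, so $\tau\leq(d_2,\dots,d_m)$ lexicographically, making $(\gamma,\tau)>(\gamma,d_2,\dots,d_m)$ impossible. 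Hence every color-$\geq 2$ addition precedes every dominant color-$1$ addition, so a rising chain has the shape: deficient color-$1$ additions, then a block of color-$\geq 2$ additions, then dominant color-$1$ additions. Throughout the middle block the color-$1$ maximum is still $c$, so every label there has first coordinate $c$; deleting this common coordinate identifies the middle block with a rising chain for the $(m-1)$-color poset on $D(2)\sqcup\cdots\sqcup D(m)$ with block $B_2\sqcup\cdots\sqcup B_m$, and the inductive hypothesis gives uniqueness of that inner order. The base case, one color, is immediate from the first-coordinate analysis above, since then there is no middle block.

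The hard part will be this third step — excluding a color-$\geq 2$ addition immediately after a dominant color-$1$ addition. The lexicographic comparison is tight: it uses that an element of $D(j)$ equals $d_j$ only when it is the maximum of that color, and that the running maxima $\max B_k'$ never exceed $d_k$; moreover one needs the fact, established just beforehand, that dominant additions are performed in increasing order, so that the preceding edge really does carry the maximal tail $(d_2,\dots,d_m)$. A secondary point to check is that deleting the common leading coordinate in the middle block genuinely reproduces the $(m-1)$-color situation — the $\epsilon$-refinement of the leading color is irrelevant to this argument, since for a dominant element both $b$ and $b-\epsilon$ lie above the relevant maximum — so that the induction on the number of colors is legitimate.
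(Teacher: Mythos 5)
Your argument is correct and arrives at the same unique ordering as the paper, but by a genuinely different route. The paper partitions the singletons globally into $S_{<B}$ (those below their color's maximum in $B$) and $S_{>B}$ (those above), proves that all of $S_{<B}$ must precede all of $S_{>B}$, and then directly analyzes the forced color-order within each of these two groups (increasing color order for $S_{<B}$, decreasing for $S_{>B}$, increasing within each color). You instead peel off color $1$: its ``deficient'' singletons (color-$1$ part of $S_{<B}$) must come first, its ``dominant'' singletons (color-$1$ part of $S_{>B}$) must come last, and the middle block of color-$\geq 2$ additions — during which $\max B_1'$ is frozen at $c$, so all labels share the leading coordinate — reduces, after stripping that coordinate, to the $(m-1)$-color instance of the same claim, which you close by induction. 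A few places are terse but do check out: (i) the forced increasing order of the dominant adds uses that the $a$'s are distinct elements of the discrete set $D(1)$, so the $\epsilon$-perturbations never flip a comparison; (ii) once you know the dominants go in increasing order, each one is added while strictly exceeding the current color-$1$ maximum, which is what makes the preceding edge carry exactly the label $(\gamma,d_2,\dots,d_m)$ in your Step $3$; and (iii) in the reduction to $m-1$ colors, stripping the leading coordinate $c$ does not literally reproduce the $(m-1)$-color labels for color-$2$ additions (those acquire the $\epsilon$-refinement in the reduced problem), but since $a_2\neq\max B_2'$ for a singleton $a_2$ and the $\epsilon$ is infinitesimal relative to the discrete order, every comparison between stripped labels agrees with the corresponding comparison between reduced labels, so the bijection of rising chains is genuine. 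The one thing you leave implicit, as does the paper at this stage, is that the resulting candidate order actually is rising; the paper defers that check to after its Claim as well, so this matches. What the paper's global $S_{<B}/S_{>B}$ split buys is a proof without an auxiliary induction; what your color-peeling induction buys is a cleaner reason for the ``sandwich'' shape of the chain and a single lexicographic contradiction that settles the transition between the middle block and the dominant tail.
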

        \begin{proof}[Proof of Claim \ref{addsing}]

            Let $S_{< B}\subset D\setminus B$ consist of all singletons $a_i$  (for $1\le i\le m$) such that $a_i<\max B_i$.
            Let $S_{> B}\subset D\setminus B$ consist of all singletons $a_i$   (for $1\le i\le m$) such that $a_i>\max B_i$.
            Note that $S_{< B}\cup S_{>B}=D\setminus B$. We first will prove that in a rising chain in $[x,\hat{1}]$
            singletons in $S_{< B}$ must be added before singletons in $S_{>B}$.

            For any subset $C$ of $D$, we can partition $D\setminus C$ into $S_{< C}\cup S_{>C}$ as above.
           If we have a block $C$  and add a singleton $a_i\in S_{<C}$, we call that a \emph{low singleton add},
                and if we have a block $C$  and add a singleton $a_i\in S_{>C}$, we call that a \emph{high singleton add}.
            Note that the label
            $\lambda(C \cup\{a_i\})$ of a high singleton add to form a block $C'=C \cup\{a_i\}$
            is greater than or equal to the label of creating the resulting block $C'$.  However, the label $\lambda(C' \cup\{b_i\})$
            of a low singleton add of $b_i\in S_{<C'}$ to a block $C'$ to form a block $C''=C' \cup\{b_i\}$
            is less than the label of creating the starting block $C'$.   Thus, in a rising chain, a high singleton add can never be followed by a low singleton add.   Starting at $x$, the first singleton add from  $S_{>B}$ will be a high singleton add (even if it occurs after some low singleton adds), and any singleton add from $S_{< B}$ at any point will be a low singleton add.
            Thus, in a rising chain in $[x,\hat{1}]$
            singletons in $S_{< B}$ must be added before singletons in $S_{>B}$.

Next we will show there is at most one rising order to merge singletons in $S_{< B}$,  and at most one rising order to add singletons in $S_{>B}$ to $B$.

            {\bf Singletons in $S_{< B}$:} We claim there is a unique rising order in which singletons in $S_{< B}$ can be added to $B$.
If we have added some singletons from $S_{< B}$ to $B$ to form a block $B'$, then note that $\max B_i=\max B'_i$ for all $i$.
Thus, whenever we add a singleton $a_i$ to a block formed from adding singletons from $S_{<B}$ to $B$, the label will be
\begin{equation}\label{E:lowsa}
(\max B_1,\ldots,\max B_{i-1},a_i-\delta \epsilon,d_{i+1},\ldots,d_m)
\end{equation}
(where $\delta=0$ if $i>1$ and $\delta=1$ if $i=1$). If $i<i'$ and $a_i,b_{i'}\in S_{<B}$, then since $a_i<\max B_i$,
$$
(\max B_1,\ldots,\max B_{i-1},a_i-\delta \epsilon,d_{i+1},\ldots,d_m) <
(\max B_1,\ldots,\max B_{i'-1},b_{i'}-\delta \epsilon,d_{i'+1},\ldots,d_m).
$$ So if $i<i'$, in a rising chain all singletons of color $i$ in $S_{<B}$ must be added before any singletons of color $i'$ in $S_{<B}$.  Among singletons of a given color in $S_{<B}$, we note by the labels as given in Equation~\eqref{E:lowsa}, in a rising chain they must clearly be added in increasing order.

  {\bf Singletons in $S_{> B}$:}
After adding all the singletons in $S_{<B}$ to $B$ to form a block $B'$, we will show that there is a unique rising order in which to add the singletons of $S_{>B}=S_{>B'}.$  The first singleton add from $S_{>B}$ will be a high singleton add, and so by our observation above, all further singleton adds in a rising chain must be high singleton adds.  When we add a singleton $a_i$ from $S_{> B}$ to block $C$ to form block $C'$ the label will be
\begin{equation}\label{E:highsa}
(\max C_1,\ldots,\max C_{i-1},a_i,d_{i+1},\ldots,d_m).
\end{equation}
(Note we do not have to consider subtracting $\epsilon$ since that is only required for low singleton adds.)
We have $\max C_j= \max C'_j$ for $j\ne i$ and $\max C'_i=a_i$ because adding $a_i$ was a high singleton add.  Suppose for the sake of contradiction that in a rising chain we add $a_i$ of color $i$ followed by $b_{i'}$ of color $i'$ with $i<i'$.  Then the label of adding $b_{i'}$ to $C'$ is
\begin{equation*}
(\max C_1,\ldots,\max C_{i-1},a_i,\max C_{i+1}, \dots, b_{i'},  d_{i'+1},\ldots,d_m).
\end{equation*}
Since every entry past the $i$th in this label is at most the corresponding entry in the label of Equation~\ref{E:highsa} above,
the chain cannot be rising and we have a contradiction.  Thus, for $i<i'$ in a rising chain we have to add all elements from
$S_{> B}$ of color $i'$ before each of color $i$.  Within a color, in a rising chain singletons clearly have to be added in increasing order.

   So far, we have shown that in any rising chain in $[x,\hat{1}]$, we have to add in singletons from $S_{<B}$ in a unique order, and then add in singletons from $S_{>B}$ in a unique order.  It remains only to check that adding the singletons in this order
does indeed give a rising chain.     It is easy to check that adding singletons from $S_{<B}$  in the required order is rising, and the final label is less than the label of the block creation of $B$.  The first addition of a singleton from  $S_{>B}$ to $B\cup S_{<B}$ has label greater than the label of the block creation of $B$.  Finally, it is easy to check that adding singletons from $S_{>B}$ to a block $B\cup S_{<B}$ in the required order is indeed rising.
    \end{proof}

  We we have shown that when $x$ has a single non-singleton block $B$, there is a unique rising chain in $[x,\hat{1}]$.
  Next we will show that the first edge $x<y$ of this chain has label less than any other edge $x<z$.
Suppose, for the sake of contradiction, that there is some edge $x<z$ with $z\ne y$ and $\lambda(x<z)\leq \lambda(x<y)$.

1. First we consider the case that $x<z$ is a block creation of a block $C$ (disjoint from $B$).  Then $\max C_1 \in D\setminus B$.

 a.  We consider the case that $x<y$ has label with first coordinate $\leq \max B_1$, then we have $\max C_1 \leq \max B_1$, which implies $\max C_1 < \max B_1$.   Thus, from our description of the unique rising chain in $[x,\hat{1} ]$ above, since $S_{<B}$ has some element of color $1$ in it, the edge $x<y$ adds the smallest singleton of  color $1$ to $B$.
Thus since $\max C_1$ is some singleton of color $1$ in $S_{<B}$, the first coordinate of $\lambda(x<y)$ is at most $\max C_1 -\epsilon$, which contradicts the hypothesis that $\lambda(x<z)\leq \lambda(x<y)$.

 b.  We consider the case that $\lambda(x<y)$ has first coordinate $>\max B_1$, i.e. $x<y$ is a high singleton add of a singleton of color $1$, and by our above analysis of the unique rising chain in $[x,\hat{1} ]$, it must be adding the smallest singleton $s_1$ of color $1$ in $D\setminus B$ and $D\setminus B$ must have no elements of any color $\geq 2$.  If $m\geq 2$, then there can be no block $C$ in
 $D\setminus B$ to create.  If $m=1$, then since $n\geq 2$, we have $\max C_1 >s_1$.  Then $\lambda(x<y)$ has first coordinate $s_1< \max C_1$, which is a contradiction since $C_1$ is the first coordinate of   $\lambda(x<z)$.

 2.  Second, we consider the case that $x<z$ adds a singleton $b_j$ to $B$.

 a.  We consider the case that $b_j\in S_{<B}$. Then $S_{<B}$ is non-empty, and we see from our analysis above of adding singletons in $S_{<B}$ that the label of $x<z$ is the same as the label of the edge that adds $b_j$ in the rising chain, which is greater than $\lambda(x<y)$, which is a contradiction.

 b.   We consider the case that $b_j\in S_{>B}$, i.e. $x<z$ is a high singleton add. Then $\lambda(x<z)$ is greater than the label of the block creation of $B$.  If $x<y$ was a low singleton add, then $\lambda(x<y)$ is less than the label of the block creation of $B$, and so we conclude $x<y$ must be a high singleton add.  From our analysis above of the unique rising chain, we then have $S_{<B}$ is empty and $x<y$  adds the minimal element $a_i$ of the maximal color $i$ of $S_{>B}$.  Thus $j\leq i$, and if $j=i$ then $a_i< b_j$.    Since $x<z$ and $x<y$ are both high singleton adds, we see from the definition of the labels that $\lambda(x<y)<\lambda (x<z)$.

Thus in every case, we conclude that the first edge $x<y$ of the rising chain in $[x,\hat{1}]$ has label less than any other edge $x<z$.

    {\bf Case II:}  Suppose $x$ had no non-singleton blocks, i.e. $x=\hat{0}$.  Then the only edges $x<y$ are block creations.
    Consider a rising chain $c$ in $[x,\hat{1}]$ that starts with an edge $x<y$ that creates a block $B$. From the above, we see there is a unique rising chain in $[y,\hat{1}]$ that we can append to $x<y$ to obtain $c$.  If $S_{<B}$ is not empty, then the  rising chain in $[y,\hat{1}]$ starts with a low singleton add whose label is less then the label of the block creation of $B$, which is a contradiction.  Thus $S_{<B}$ is empty, and we have that $B$ must consist of the $n$ smallest elements of each color.  Thus any rising  chain in $[x,\hat{1}]$ must start with a specified block creation $x<y$, and from there continue with the unique rising  chain in $[y,\hat{1}]$.  This shows there is at most $1$ rising  chain in $[x,\hat{1}]$.  Also, note that if we form the block $B$ of the $n$ smallest elements of each color in $x<y$, then $S_{<B}$ is empty, and the first edge in the rising chain of $[y,\hat{1}]$ is a high singleton add, which implies its label is greater than the label of the block creation of $B$.  So, we can concatenate $x<y$ with the rising chain of $[y,\hat{1}]$ to get a rising chain of $[y,\hat{1}]$.  Since any edge $x<y$ is a block creation, and the block of the $n$ least elements of each color has label less than any other block creation, the first edge of the rising chain in $[x,\hat{1}]$ has label less than any other edge $x<z$.

      {\bf Case III:}  Suppose $x$ has more than one non-singleton block.  Then in any maximal chain of $[x,\hat{1}]$, there must be some block merging, and in a rising chain, all block merging must happen before any block creation or singleton adding.  So any rising chain must start with block merging until there is only one block.  If there are $k$ blocks, and the maximal elements of color $1$ in them are $a(1)_1<\dots< a(k)_1,$ then note that $\{\overline{a(j)}_1\}$ are the only possible labels of block merges starting from these blocks into a single block.   Further, $\overline{a(1)}_1$ can never be one of the labels of the block merges.  Thus, since $k-1$ block merges are required with strictly increasing labels, a rising chain must first merge the blocks containing $a(1)_1$ and $a(2)_1$, and then merge the result with the block containing $a(3)_1$, and so on.  Once we have one block in a partition $w$, there is a unique rising chain in $[w,\hat{1}]$ from the above that only involves singleton adds.  Therefore there is at most 1 rising chain in $[x,\hat{1}]$.  Further, we note that if we merge blocks in the order described above, that part of the chain is rising, and the unique rising chain in $[w,\hat{1}]$ only involves singleton adds, whose labels are all greater than block merge labels, and thus there is a rising chain in $[x,\hat{1}]$.
Finally, we consider the first block merge $x<y$ in this rising chain and any other edge $x<z$.  If $z$ is not a block merge, then clearly $\lambda(x<z) > \lambda(x<y)$, and if $z$ is a block merge other than that of the blocks containing $a(1)_1$ and $a(2)_1$, then $\lambda(x<z) > \lambda(x<y)$.  So, the first edge of the rising chain in $[x,\hat{1}]$ has label less than any other edge $x<z$.

\bigskip
\noindent
    {\bf Step 2: General Intervals.}
    \noindent
     Let $x<w\in\Pi^{D}_n$ be a general pair of elements.
     Let the non-singleton blocks of $w$ be $D_\alpha$, indexed by $\alpha$.  Then, Lemma \ref{L:fcprod1} gives that $\Pi^{D}_n(\leq w) \simeq \prod_\alpha
     \Pi^{D_\alpha}_n$.
Each edge  in   $\Pi^{D}_n(\leq w)$ only modifies blocks that are subsets of a  single $D_\alpha$.
     For each $D_\alpha$, let $x_\alpha$ be the partition of $D_\alpha$ obtaining by taking the blocks of $x$ that are subsets of $D_\alpha$.  Then, for each $\alpha$, we know there is a unique rising chain
in $[x_\alpha,\hat{1}]\subset \Pi^{D_\alpha}_n$.
 Note that  edge labels only depend on the blocks that are being modified by an edge.
 If we have a rising chain of $[x,w]$, for any $\alpha$, we can take the subset of edges that involve modifying subsets of $D_\alpha$ and obtain a rising chain of $[x_\alpha,\hat{1}]\subset \Pi^{D_\alpha}_n$ (which must be the unique such chain).
 Further,  the first coordinate of the label of any edge gives an element in a block (singleton or non-singleton) being modified by that edge.  Thus if $\alpha\neq \beta$, a label of an edge modifying subsets of $D_\alpha$ cannot be the same as the label of an edge modifying subsets of $D_\beta$.  Since the labels are linearly ordered, there is a unique way to combine the rising chains of $[x_\alpha,\hat{1}]\subset \Pi^{D_\alpha}_n$ into a rising chain of $[x,w]$.

 Finally, suppose that $x<z$ is some edge with $z\leq w$.
Let $\beta$ be such that the first edge  of  the rising chain of $[x,w]$ modifies subsets of $D_\beta$.
For each $\alpha$, let $x_\alpha< y_\alpha$ be the first edge of the rising chain in  $[x_\alpha,\hat{1}]\subset \Pi^{D_\alpha}_n$.  Note that for each $\alpha$, the label of $x_\alpha< y_\alpha$ occurs as a label in the rising chain of $[x,w]$, and thus $\lambda(x_\alpha< y_\alpha)\geq \lambda(x_\beta< y_\beta)$ with equality if and only if $\alpha=\beta$.
 Then $x<z$ corresponds, for some $\alpha$, to an edge $x_\alpha< z_\alpha$ of elements of $\Pi^{D_\alpha}_n$, and thus either $z_\alpha=y_\alpha$ or the label of $x<z$ (which is the same as the label of $x_\alpha< z_\alpha$) is greater than or equal to the label of $x_\alpha<y_\alpha$, with equality if and only if $z_\alpha=y_\alpha$.
   Thus $\lambda(x<z)\geq \lambda(x_\beta< y_\beta)$ with equality if and only if $\alpha=\beta$ and $z_\alpha=y_\beta$.
   In other words, for any edge $x<z$ with $z\leq w$ that is not the first edge  of  the rising chain of $[x,w]$, we have that
   $\lambda(x<z)$ is greater than the label of the first edge of the rising chain of $[x,w]$.
\end{proof}

Theorem \ref{thm:EL} together with Lemma \ref{L:BW} shows that for any colored $n$-equals partition $I$, the homology of $\Delta(\overline{\Pi^D_n(\le I)})$ is torsion free.  Along with Lemma \ref{L:fcprod1}, this suggests a K\"unneth type formula. We develop this now.

For any colored subset $F$ of $D$, the inclusion $F\into D$ induces an injection of lattices $\Pi^{F}_n\into \Pi^D_n$, and under this injection $\Pi^{F}_n\cong \Pi^D_n(\leq \tilde{F})$, where $\tilde{F}$ is the $n$-equals partition consisting of the single nonsingleton block $F$ (or $\tilde{F}=\hat{0}$ if $F$ contains fewer than $n$ elements of some color).

In light of this, we can rewrite the isomorphism of Lemma~\ref{L:fcprod1} as
\[\prod_i\Pi^D_n(\leq \tilde{J_i})\to^\cong\Pi^D_n(\leq J)\]
where $\{J_i\}$ are the blocks of the partition $J$.  Note that we are viewing $J_i$ both as a block of $J$ and as a partition of $D$ with only that non-singleton block.

\begin{definition}[{\boldmath$\cd(I)$}]
\label{definition:cd}
For a partition $I$, define $\cd(I):=|D|-|I|$, i.e. $\cd(I)$ equals the number of elements of $D$ minus the number of blocks of $I$.
\end{definition}

\begin{definition}
    Given $I,J\in\Pi^D_n$, denote their {\em join} by $I\wedge J$, i.e. $I\wedge J$ is the finest partition which both $J$ and $I$ refine. We say that $I$ and $J$ {\em meet transversely} when $\cd(I)+\cd(J)=\cd(I\wedge J)$.
\end{definition}

As discussed in \S 1.8-1.9 and \S 4.2 of \cite{DGM}, the direct sum
\begin{align}\label{posetalg}
    \left(\bigoplus_{I\in\Pi^D_n} \tilde{H}_{\ast-2}(\Delta(\overline{\Pi^D_n(\le I)});\Z)\right)
\end{align}
has the structure of a graded-commutative algebra; we refer to its product as the {\em intersection product}. From the construction (below), this algebra carries a natural $S_D$ action.

Explicitly, the intersection product is given by 0 on summands associated to $I$ and $J$ which do not meet transversely.  On summands associated to $I$ and $J$ which do meet transversely, the intersection product is given on each summand by the composition
\begin{align}\label{intprod}
    \tilde{H}_i(\Delta(\overline{\Pi^D_n(\le I)});\Z)\otimes \tilde{H}_j(\Delta(\overline{\Pi^D_n(\le J)});\Z)&\to^\cong  \tilde{H}_{i+j}(\Delta(\overline{\Pi^D_n(\le I)})\times \Delta(\overline{\Pi^D_n(\le J)});\Z)\nonumber\\
    &\to^\cong \tilde{H}_{i+j+2}(\Delta(\overline{\Pi^D_n(\le I)\times\Pi^D_n(\le J)});\Z)\nonumber\\
    &\to^\wedge\tilde{H}_{i+j+2}(\Delta(\overline{\Pi^D_n(\le I\wedge J)});\Z)
\end{align}
where the first isomorphism is given by K\"unneth, the second isomorphism arises from a canonical homeomorphism of order complexes of bounded posets, due to Walker \cite[Theorem 5.1]{Wa} \footnote{We remark that our notation differs slightly from \cite{Wa}: here a degree $2$ shift appears rather than degree $1$ (as in \cite{Wa}); this is because Theorem 5.1 in \cite{Wa} involves a suspension.}, and the final map comes from the join, viewed as a map of posets
\begin{align*}
    \Pi^D_n(\le I)\times\Pi^D_n(\le J)&\to \Pi^D_n(\le I\wedge J)\\
    (L,K)&\mapsto L\wedge K.
\end{align*}
For a partition $I$ with blocks $I_1,\ldots,I_\ell$, note that the isomorphism of Lemma \ref{L:fcprod1} is just the iterated join
\begin{align*}
    \prod_i \Pi^D_n(\le \tilde{I_i})&\to^\wedge_\cong \Pi^D_n(\le I)\\
    (L_1,\ldots,L_\ell)&\mapsto L_1\wedge\ldots\wedge L_\ell.
\end{align*}
Comparing with the definition of the intersection product, we conclude the following.
\begin{proposition}({\bf A ``K\"unneth'' decomposition})
\label{proposition:kunneth}
    Let $I$ be a partition.  Denote by $I_1,\ldots ,I_\ell$  the blocks of $I$.  The intersection product induces for each $k\geq 0$ a $\stab_I$-equivariant isomorphism
    \begin{equation}\label{kunneth1}
        \bigoplus _{k_1+\cdots +k_\ell=k}\bigotimes_{i=1}^\ell \widetilde{H}_{k_i}(\Delta(\overline{\Pi^{I_i}_n});\Z)\to^{\cong}\widetilde{H}_{k+2(\ell-1)}(\Delta(\overline{\Pi^D_n(\le I)});\Z).
    \end{equation}
    In particular, the algebra \eqref{posetalg} is generated by the subspace $\bigoplus_{I'}H_{*-2}(\Delta(\overline{\Pi^D_n(\leq I')});\Z)$ where the sum is over partitions $I'\in \Pi^D_n$ with precisely one nonsingleton block.
\end{proposition}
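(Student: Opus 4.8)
The statement follows by carefully unwinding the definitions of the intersection product \eqref{intprod} and the iterated join of Lemma~\ref{L:fcprod1}, together with a book-keeping of the homological degree shift. The plan is to show that the $\stab_I$-equivariant isomorphism $\prod_i \Pi^D_n(\le I_i) \to^\cong \Pi^D_n(\le I)$ of Lemma~\ref{L:fcprod1} induces on reduced homology of order complexes precisely the map \eqref{kunneth1}, and that under this identification the iterated join is the iterated intersection product.

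\medskip\noindent\textbf{Step 1: The two-factor case.} First I would establish the $\ell=2$ case. Given the partition with blocks $I_1,\dots,I_\ell$, observe that any two of the "single nonsingleton block" partitions $\widetilde{I_i}$ (in the sense of the paragraph preceding Definition~\ref{definition:cd}) meet transversely: since $I_i$ and $I_j$ involve disjoint subsets of $D$ when $i\ne j$, one has $\cd(\widetilde{I_i})+\cd(\widetilde{I_j}) = (|I_i|-1)+(|I_j|-1) = \cd(\widetilde{I_i}\wedge\widetilde{I_j})$, and $\widetilde{I_i}\wedge\widetilde{I_j}$ is the partition whose nonsingleton blocks are exactly $I_i$ and $I_j$. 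Hence the intersection product on the relevant summands is \emph{not} the zero map, and is given by the composite in \eqref{intprod}. By Lemma~\ref{L:fcprod1}, $\Pi^D_n(\le \widetilde{I_i}\wedge\widetilde{I_j})\cong \Pi^D_n(\le \widetilde{I_i})\times\Pi^D_n(\le\widetilde{I_j})$, so Walker's homeomorphism \cite{Wa} identifies $\Delta(\overline{\Pi^D_n(\le\widetilde{I_i}\wedge\widetilde{I_j})})$ with $\Delta(\overline{\Pi^D_n(\le\widetilde{I_i})\times\Pi^D_n(\le\widetilde{I_j})})$; composing with K\"unneth and accounting for the degree shift by $2$ (Convention~\ref{convention:empty}) gives exactly \eqref{kunneth1} for $\ell=2$. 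The key point is that the final "$\wedge$" map in \eqref{intprod} is, in this transverse situation, an \emph{isomorphism} rather than merely a map, because the join map of posets $\Pi^D_n(\le\widetilde{I_i})\times\Pi^D_n(\le\widetilde{I_j})\to\Pi^D_n(\le\widetilde{I_i}\wedge\widetilde{I_j})$ is itself the isomorphism of Lemma~\ref{L:fcprod1}.

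\medskip\noindent\textbf{Step 2: Iterating, and tracking equivariance.} Next I would induct on $\ell$, writing $I = (I_1\wedge\cdots\wedge I_{\ell-1})\wedge I_\ell$ and checking that $I_1\wedge\cdots\wedge I_{\ell-1}$ still meets $\widetilde{I_\ell}$ transversely (again immediate from disjointness of supports and additivity of $\cd$). The associativity and graded-commutativity of the intersection product — part of the algebra structure recalled from \cite{DGM} — guarantee that the $\ell$-fold composite is independent of the order of bracketing, matching the iterated-join description of Lemma~\ref{L:fcprod1} recorded just before the proposition. The $\stab_I$-equivariance is inherited at each stage: $\stab_I$ permutes the blocks $I_i$ among those of the same "shape" and acts on each $\Pi^{I_i}_n$ by $S_{I_i}$, and both Walker's homeomorphism and the K\"unneth isomorphism are natural, so they intertwine these actions with the action on $\widetilde H_*(\Delta(\overline{\Pi^D_n(\le I)}))$. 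Summing \eqref{kunneth1} over all partitions $I'$ with a single nonsingleton block and invoking Theorem~\ref{thm:EL} with Lemma~\ref{L:BW} to know the homology is concentrated as claimed, one concludes that the subspace spanned by those summands generates the algebra \eqref{posetalg} under the intersection product.

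\medskip\noindent\textbf{Main obstacle.} The routine-looking part — checking transversality and additivity of $\cd$ — is genuinely easy; the real care is needed in the degree bookkeeping and in verifying that the composite \eqref{intprod} restricted to transverse summands coming from the $\widetilde{I_i}$ is \emph{exactly} the isomorphism of Lemma~\ref{L:fcprod1} (not just isomorphic to it up to some automorphism), so that the iterated product literally reproduces the K\"unneth map. This hinges on Walker's theorem \cite[Theorem 5.1]{Wa} being compatible, as a homeomorphism of order complexes, with the poset-level join map; making that compatibility precise — and hence making "the intersection product induces" in the statement a genuine equality of maps rather than an abstract isomorphism — is where essentially all the content lies. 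Everything downstream (equivariance, the generation statement) is then formal.
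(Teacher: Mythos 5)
Your proposal follows essentially the same route as the paper's proof, which is in fact terser: it simply notes that the isomorphism of Lemma~\ref{L:fcprod1} is the iterated join and then says ``comparing with the definition of the intersection product, we conclude'' --- precisely the chain you spell out. Your transversality check for disjoint blocks and the observation that the final ``$\wedge$'' map in \eqref{intprod} is induced by the poset isomorphism of Lemma~\ref{L:fcprod1}, hence itself an isomorphism, are exactly the content.

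Two small attributions deserve correction, though neither affects the logic. The identification of $\Delta(\overline{\Pi^D_n(\le\widetilde{I_i}\wedge\widetilde{I_j})})$ with $\Delta(\overline{\Pi^D_n(\le\widetilde{I_i})\times\Pi^D_n(\le\widetilde{I_j})})$ is immediate from the poset isomorphism of Lemma~\ref{L:fcprod1} (apply $\Delta(\overline{\cdot})$); Walker's theorem is the \emph{separate} input relating $\Delta(\overline{P\times Q})$ to $\Delta(\overline{P})$ and $\Delta(\overline{Q})$ with a degree shift of $2$, and iterating this $\ell-1$ times accounts for the $2(\ell-1)$ in \eqref{kunneth1}. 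Convention~\ref{convention:empty}, by contrast, only dictates the contribution of singleton blocks (degree $-2$), and should not be credited with the shift. Your ``main obstacle'' paragraph also slightly overstates what is needed: one does not need the composite \eqref{intprod} to \emph{be} the Lemma~\ref{L:fcprod1} map --- only to be an isomorphism, which holds because each of its three constituents (K\"unneth, valid integrally by the torsion-freeness supplied by Theorem~\ref{thm:EL} and Lemma~\ref{L:BW}; Walker; and the join map, which is induced by the Lemma~\ref{L:fcprod1} bijection of posets) is an isomorphism.
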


One might naively expect that Proposition \ref{proposition:kunneth} should follow directly from
the K\"unneth theorem.   However, it is not the case that the isomorphism in Lemma~\ref{L:fcprod1} holds with $\Pi^D_n(\le I)$ replaced by $\overline{\Pi^D_n(\le I)}$.    Indeed, this is one reason that the intersection product is needed. On the other hand, an interpretation in terms of Kunneth can be found in Lemmas 4.2 and 4.5 of \cite{Pet2}.

\section{Spaces of ordered $0$-cycles}
\label{section:0cycles}
Now fix a manifold $X$ of dimension $N$.  Define $\widetilde{\Poly}^D_n(X)\subseteq X^{D}$
 to be the space of $D$-tuples of (not necessarily distinct) points in $X$ labeled by the elements of $D$ such that no point of $X$ has at least $n$ labels of each color.    Since we have fixed $m$ colors throughout, if $D$ happens to not include any elements of some color then $\widetilde{\Poly}^D_n(X)=X^D$.
The permutation action of $S_D$ on $X^{D}$ leaves invariant $\widetilde{\Poly}^D_n(X)$.

The goal of this section is to prove Theorem \ref{theorem:leray} below.  This theorem
describes the $\Q[S_D]$-algebra structure of the $E_2$-page of the Leray spectral sequence associated to the inclusion $\widetilde{\Poly}^D_n(X)\to X^D$ and the constant sheaf $\Z$.
The description will be in terms of the cohomology of $X$ and the homology of order complexes related to $\Pi^D_n$.
Petersen \cite{Pe} gives a spectral sequence for stratified spaces that might also be used to be understand the homology of $\widetilde{\Poly}^D_n(X)$.  However, we crucially
need to further understand that action of $S_D$ on the homology, and it is not straightforward to see that action in Petersen's spectral sequence.
We begin by studying the combinatorics of the complement of $\widetilde{\Poly}^D_n(\Rb^N)$ in $\Rb^N$. Let
\begin{align*}
    L_1(D):=\{(\vec{x}_1,\ldots,\vec{x}_m)\in(\Rb^{N})^{D(1)}\times\ldots\times(\Rb^{N})^{D(m)}~|~ x_{1i}&=x_{ai}\text{ for } 1\le i\le n,~1<a\le m,\\
    x_{11}&=x_{1j}\text{ for }2\le j\le n\}
\end{align*}

Define the {\em colored $n$-equals arrangement} $\mathcal{A}_{n}^{N,D}$ to be the linear subspace arrangement in $(\Rb^{N})^D$ consisting of the set of all translates of $L_1(D)$ under the action of $S_D$. Denote by $\Pi^D_n(\Rb^{N})$ the associated {\em intersection lattice} :
\begin{equation*}
    \Pi^{D}_n(\Rb^{N}):=\{L\subset (\Rb^{N})^{D}~|~L=L_{\sigma_1}\cap\cdots\cap L_{\sigma_k},\text{ for }\sigma_i\in S_{D},~L_{\sigma_i}=\sigma_i(L_1(D))\}
\end{equation*}
and $\Pi^{D}_n(\Rb^{N})$ is ordered by reverse inclusion. Note also that we include the entire space $(\Rb^{N})^{D}$ (i.e. the empty intersection when $k=0$ above), and will alternately denote it by $\hat{0}$. It is an initial  element of the poset $\Pi^{D}_n(\Rb^{N})$.

\begin{remark}
    For $(m,n)=(1,2)$, the arrangement $\mathcal{A}_{1}^{2,D}$  with complement $\widetilde{\Poly}^D_2(\Cb)$ is precisely the braid arrangement studied by Arnol'd \cite{Ar}. Arnol'd showed that the cohomology algebra is generated by classes in degree 1 subject to a quadratic relation. The algebras $H^\ast(\widetilde{\Poly}^D_n(\Rb^{N});\Zb)$ are near cousins of Arnol'd's algebra, and one might hope they admit a similar presentation, though we do not expect that they are always quadratic algebras.
\end{remark}

\begin{problem}\label{problem:presentation}
    Give an algebra presentation for $H^\ast(\widetilde{\Poly}^D_n(\Rb^{N});\Zb)$.
\end{problem}
We do not solve Problem \ref{problem:presentation} here, but only give a set of algebra generators. A solution to Problem~\ref{problem:presentation} would shed significant light on the algebra structure of the $E_2$-page of the Leray spectral sequence for the inclusion $\widetilde{\Poly}^D_n(X)\to X^D$.

Before continuing we will need to make a definition.

\begin{definition}[{\boldmath$\cd(x),\coor(x)$}]
 When $x$ is an element of the intersection lattice of a subspace arrangement over $\Rb$,  we will denote by $\cd(x)$ the codimension of the subspace $x$, and when $x$ is also a complex subspace, we write $\cd_\C(x)$ for its complex codimension.  Let
 \[\coor(x):= H^{N}_c(\Rb^N;\Z)\tensor  \Hom(H^{\dim(x)}_c(x;\Z),\Z).\]
 More generally, given a smooth closed submanifold $Z$ in a manifold $X$, define \[\coor(Z):=H^{\dim(X)}_c(X;\Z)\otimes\Hom(H^{\dim(Z)}_c(Z;\Z),\Z).\]
\end{definition}

We will need the following form of the Goresky-MacPherson formula. The first statement follows from Deligne--Goresky--MacPherson \cite[Corollary 1.8, and \S 1.10-1.11]{DGM}, and the fact that the morphism constructed in \cite[\S 1.6]{DGM} is clearly equivariant under our hypotheses. For the second statement, see Deligne--Goresky--MacPherson \cite[\S 4.2]{DGM}; or de Longueville--Schultz \cite[Theorem 5.2]{LS}, for a related treatment.

\begin{theorem}[{\bf Goresky-MacPherson Formula}]\label{theorem:GM}
    Let $\Ac:=\{L_i\}$ be an arrangement of linear subspaces in $\Rb^N$, and let $\Pi_{\Ac}$ denote its intersection lattice.  Let $\Mc_{\Ac}:=\Rb^N-\bigcup_i L_i$.

Suppose that  for every $x,y \in \Pi_{\Ac}$ with $x<y$ we have  $\cd(y)-\cd(x)\ge 2$, and
  $H^*(\Mc_{\Ac};\Zb)$ is a free $\Z$-module.  Then we have the following.
    \begin{enumerate}
        \item There exists an isomorphism of abelian groups
            \begin{equation}\label{GM}
                H^i(\Mc_{\Ac};\Zb)\cong \bigoplus_{x\in \Pi_{\Ac}} \tilde{H}_{\cd(x)-i-2}(\Delta(\overline{\Pi_{\Ac}(\le x)});\Zb) \tensor \coor(x)
            \end{equation}
            that is equivariant with respect to invertible linear maps $\sigma\in\GL(\Rb,N)$ that  preserve the arrangement $\Ac$.  (The action on the left-hand side comes from the action of the linear maps on $\Mc_{\Ac}$ and the action on the right hand side comes from the induced action on $\Pi_{\Ac}$ and the induced actions on the
     $\coor(x)$.)
        \item Taking the direct sum over all $i\geq 0$ in \eqref{GM}  gives an isomorphism of graded-commutative algebras :
          \begin{equation}\label{GM4}
                H^*(\Mc_{\Ac};\Zb)\cong \bigoplus_i \bigoplus_{x\in \Pi_{\Ac}} \tilde{H}_{\cd(x)-i-2}(\Delta(\overline{\Pi_{\Ac}(\le x)});\Zb)\otimes\coor(x)
            \end{equation}
where the algebra structure on the right-hand side of \eqref{GM4} is the intersection product \eqref{intprod} on the tensor factors for the order complex, and is given by the natural maps $\coor(x)\otimes\coor(y)\to\coor(x\cap y)$, which are isomorphisms for subspaces $x$ and $y$ which intersect transversely, and 0 otherwise.
    \end{enumerate}
\end{theorem}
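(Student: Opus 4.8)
This is a form of the Goresky--MacPherson formula; it is proved in Deligne--Goresky--MacPherson \cite{DGM} (see also de Longueville--Schultz \cite{LS}). Here is the line of argument I would follow.

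For statement (1), the plan is to pass to the complement by Alexander duality and then decompose the union of the arrangement over the intersection poset. Let $V_{\Ac}:=\bigcup_i L_i$, a closed subanalytic subset of $\Rb^N$. Embedding $\Rb^N\subset S^N$ and forming the one-point compactification $\hat V:=V_{\Ac}\cup\{\infty\}$, one has $S^N\setminus\hat V=\Mc_{\Ac}$, so Alexander duality gives $\tilde H^i(\Mc_{\Ac};\Z)\cong\tilde H_{N-i-1}(\hat V;\Z)$; moreover a linear $\sigma\in\GL(\Rb,N)$ preserving $\Ac$ extends to a homeomorphism of the pair $(S^N,\hat V)$, so this isomorphism is $\GL$-equivariant up to the orientation twist recorded by $H^N_c(\Rb^N;\Z)$. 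For $x\in\Pi_{\Ac}$ with $x>\hat 0$, the compactification $\hat x:=x\cup\{\infty\}$ is a sphere of dimension $\dim(x)=N-\cd(x)$, and $\hat x\subseteq\hat y$ exactly when $x\le y$. I would then invoke the Ziegler--\v{Z}ivaljevi\'{c} decomposition (a homotopy-colimit / ``projection lemma'' argument, carried out in this generality in \cite{LS}) to get a homotopy equivalence
\[
 \hat V\;\simeq\;\bigvee_{x>\hat 0}\ \hat x \ast \Delta(\overline{\Pi_{\Ac}(\le x)}),
\]
the join with the empty order complex (when $x$ is an atom) being read via Convention~\ref{convention:empty}. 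Since $\hat x$ is a sphere, $\tilde H_*(\hat x\ast K)\cong \tilde H_{*-\dim(x)-1}(K)\otimes\tilde H_{\dim(x)}(\hat x)$ with no Tor correction, and $\tilde H_{\dim(x)}(\hat x)$ is canonically $\Hom(H^{\dim(x)}_c(x;\Z),\Z)$; together with the $H^N_c(\Rb^N;\Z)$ arising from the Alexander-duality fundamental class this produces the factor $\coor(x)$. After the substitution $\dim(x)=N-\cd(x)$, the summand indexed by $x$ is $\tilde H_{\cd(x)-i-2}(\Delta(\overline{\Pi_{\Ac}(\le x)});\Z)\otimes\coor(x)$ (with $x=\hat 0$ accounting for $H^0$ via the stated conventions), and the hypotheses $\cd(y)-\cd(x)\ge 2$ for $x<y$ and $H^*(\Mc_{\Ac};\Z)$ free are what make the associated spectral sequence degenerate and keep everything torsion free; this gives \eqref{GM}. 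Tracking the $\GL$-action through Alexander duality, the \v{Z}ivaljevi\'{c} decomposition, and the join formula gives the stated equivariance.

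For statement (2), the algebra structure on the left of \eqref{GM4} is the cup product on $H^*(\Mc_{\Ac};\Z)$, and under Alexander duality cup product is Poincar\'{e}--Lefschetz dual to the intersection of (Borel--Moore) cycles in $\hat V$. The strata $\hat x$ and $\hat y$ meet in $\widehat{x\cap y}$, transversely precisely when $\cd(x)+\cd(y)=\cd(x\cap y)$ and with product $0$ otherwise. Transporting this geometric pairing through the \v{Z}ivaljevi\'{c} decomposition, on the order-complex tensor factors it becomes the intersection product \eqref{intprod} (visibly zero on non-transverse pairs) and on the orientation lines it becomes the natural contractions $\coor(x)\otimes\coor(y)\to\coor(x\cap y)$; compatible orientability of $\{L_i\}$ is exactly what lets the twists be chosen coherently so that the product is defined over $\Z$. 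This identification of the two products is the computation of \cite[\S 4.2]{DGM}.

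The main obstacle is statement (2): matching the cup product with the combinatorially-defined intersection product, while keeping the orientation bookkeeping consistent, needs a careful analysis of the geometry of the compactified arrangement and is the technical heart of the argument. For statement (1) the subtler point is producing the \v{Z}ivaljevi\'{c} decomposition of $\hat V$ in a form that is natural for the $\GL$-action, not merely an abstract isomorphism of graded groups.
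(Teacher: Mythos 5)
Your sketch is sound and the degree bookkeeping (the reindexing $j=\cd(x)-i-2$ coming from Alexander duality plus the $\hat x\ast K$ join) checks out, including the role of Convention~\ref{convention:empty} for atoms and $\hat 0$. But note that the paper does not actually prove Theorem~\ref{theorem:GM}; it outsources both statements to the literature, and the route it cites is not the one you follow. For (1) the paper invokes \cite[Cor.~1.8 and \S1.10--1.11]{DGM}, and for equivariance the observation that the comparison morphism of \cite[\S1.6]{DGM} is natural. The DGM argument is sheaf-theoretic: one builds a canonical morphism of complexes of sheaves on $\Rb^N$ (glued from local cohomology along the strata $L_i$) and checks it is a quasi-isomorphism, from which the direct-sum decomposition of $H^*(\Mc_{\Ac})$ and its naturality under any automorphism of the arrangement drop out at once. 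Your approach instead goes through the one-point compactification, Alexander duality, and the Ziegler--\v{Z}ivaljevi\'{c} homotopy wedge decomposition of $\hat V$, which is closer in spirit to the original Goresky--MacPherson proof (and to \cite{LS}). Both routes are standard; the trade-off is that the DGM construction makes equivariance and multiplicativity nearly automatic once the quasi-isomorphism is established, whereas your route is more geometric and elementary but forces you to track the $\GL$-action and the cup-vs.-intersection product comparison by hand through the homotopy equivalences. You correctly flag exactly those two points as the technical pressure points, so there is no gap of substance — just a choice of machinery. One cosmetic remark: the wedge decomposition actually holds without the codimension and freeness hypotheses; those hypotheses are what the paper needs to ensure the resulting $E_1$-page collapses and that tensoring with $\coor(x)$ behaves well integrally, and you use them in the right place, so this is fine, just worth being explicit that they are not inputs to the \v{Z}ivaljevi\'{c} step itself.
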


Now let $I$ be an $n$-equals partition of the colored set $D$, with blocks $I_1,\ldots ,I_e$.
Let $X_I\subset X^D$ be the subset where coordinates from the same $I_i$ are all equal.  Note that $\dim X_I=e\dim (X)$ and $X_I\cong \prod_i X_{I_i}$ where $X_{I_i}\cong X$ and the isomorphism records the equality of the coordinates indexed by $I_i$. Define
\[
    \cd(I,X):=\dim_{\Rb}(X)\cdot (|D|-e),
\]
which is the codimension of $X_I$ in $X^D$.
When $X=\Rb^N$, the $X_I$ form a linear subspace arrangement, and we have the map of posets $\Pi^D_n \ra \Pi_{\{X_I\}_I}$ given by $I\mapsto X_I$ is an isomorphism.  Further, we have an inclusion of the colored $n$-equals arrangement $\mathcal{A}^{N,D}_n \ra \{X_I\}_I$, and it is not hard to see that they have the same intersection lattices.  So, we also have an isomorphism of posets $\Pi^D_n \ra  \Pi^D_n(\Rb^N)$ taking $I$ to $X_I$.

\begin{definition}[{\boldmath$\epsilon_I(q)$}]
    Let $I$ be an $n$-equals partition of the colored set $D$, with blocks $I_1,\ldots,I_e$.  Define $\epsilon_I(q)$ to be the $\stab_I$-equivariant constant sheaf, supported on $X_I$, whose stalk at each point equals
    \[\tilde{H}_{\cd(I,X)-q-2}(\Delta(\overline{\Pi_n^D(\leq I)});\Z)\otimes\coor(X_I)\iso \bigoplus _{k_1+\cdots +k_e=\cd(I,X)-q-2e}\bigboxtimes_{i=1}^e \widetilde{H}_{k_i}(\Delta(\overline{\Pi^{I_i}_n});\Z)\otimes\coor(X_{I_i})\]
    where $\bigboxtimes$ denotes the external tensor product of the constant sheaves $\widetilde{H}_{k_i}(\Delta(\overline{\Pi^{I_i}_n});\Z)\otimes\coor(X_{I_i})$ on $X_{I_i}$ and where $\stab_I$ acts on the tensor factors by
    \begin{equation*}
        \tilde{H}_{k_i}(\Delta(\overline{\Pi_n^{I_i}});\Z)\otimes\coor(X_{I_i})\to^{\sigma_*} \tilde{H}_{k_i}(\Delta(\overline{\Pi_n^{\sigma\cdot I_i}});\Z)\otimes\coor(X_{\sigma\cdot I_i}).
    \end{equation*}
\end{definition}

Theorem \ref{theorem:GM} for colored $n$-equals arrangement $\mathcal{A}^{N,D}_n$ endows the direct sum $\bigoplus_q \bigoplus_{I\in \Pi^D_n}\epsilon_I(q)$ with a canonical structure as a sheaf of graded algebras.

\begin{theorem}[{\bf The Leray spectral sequence}]\label{theorem:leray}
    Let $X$ be a connected, orientable manifold of dimension $N\ge 2$. Let $\tilde{E}_2^{p,q}(X,D,n)$ denote the $(p,q)$ term of the $E_2$-page of the Leray spectral sequence for the inclusion $\pi: \widetilde{\Poly}^D_n(X)\ra X^D$,  computing the cohomology of the constant sheaf $\Z$ on $\widetilde{\Poly}^D_n(X)$.  There exists an $S_D$-equivariant isomorphism of bigraded algebras
    \[
        \bigoplus_{p,q}\tilde{E}_2^{p,q}(X,D,n)\iso \bigoplus_{p,q}\bigoplus_{I\in\Pi^D_n}H^p(X_I;\epsilon_I(q)).\]

        Further, when $X$ is a smooth, complex algebraic variety, this isomorphism respects mixed Hodge structures.  Here the mixed Hodge structure on \[\epsilon_I(q)\iso \tilde{H}_{\cd(I,X)-q-2}(\Delta(\overline{\Pi_n^D(\leq I)});\Z)\otimes\coor(X_I)\] is trivial on the first tensor factor and the canonical one (i.e.\ pure of type $(\cd_\C(X_I),\cd_\C(X_I))$) on the second tensor factor.
        \end{theorem}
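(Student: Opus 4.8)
The plan is to compute the coefficient sheaves $R^q\pi_*\Z$ of the Leray spectral sequence and then take cohomology. Stratify $X^D$ by equality type: for $x\in X^D$ let $\mathcal{Q}(x)$ be the partition of $D$ whose blocks record which coordinates of $x$ agree. Then $\widetilde{\Poly}^D_n(X)$ is the complement in $X^D$ of the closed set $B=\bigcup_S\Delta_S$, where $\Delta_S\subset X^D$ is the small diagonal on $S$ and the union runs over subsets $S\subseteq D$ with exactly $n$ elements of each color; the intersection poset of $\{\Delta_S\}$ is identified with $\Pi^D_n$ via $I\mapsto X_I$. The first step is the stalk computation for $R^q\pi_*\Z$. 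If $\mathcal{Q}(x)=\mathcal{Q}$ has blocks $\mathcal{Q}_1,\dots,\mathcal{Q}_e$, a small neighborhood of $x$ in $X^D$ splits as a product over $j$ of neighborhoods of small-diagonal points in $X^{\mathcal{Q}_j}$, and in charts this identifies $\widetilde{\Poly}^D_n(X)$ near $x$ with $\prod_j\big((\Rb^{N})^{\mathcal{Q}_j}\setminus\mathcal{A}^{N,\mathcal{Q}_j}_n\big)$, the $j$-th factor being the complement of a colored $n$-equals arrangement (and all of $(\Rb^{N})^{\mathcal{Q}_j}$ when $\mathcal{Q}_j$ has fewer than $n$ elements of some color). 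Radial scaling makes each such complement homotopy equivalent to $\widetilde{\Poly}^{\mathcal{Q}_j}_n(\Rb^{N})$, reducing the stalk problem to the case $X=\Rb^{N}$.

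\textbf{The local model.} For $X=\Rb^{N}$ I would invoke the Goresky--MacPherson formula (Theorem~\ref{theorem:GM}) for $\mathcal{A}^{N,D}_n$. Its hypotheses hold: every edge $I<J$ of $\Pi^D_n$ merges blocks and so drops the number of blocks by at least one, whence $\cd(X_J)-\cd(X_I)\ge N\ge 2$; and $H^*(\widetilde{\Poly}^D_n(\Rb^{N});\Z)$ is torsion-free because, by EL-shellability (Theorem~\ref{thm:EL}) and Lemma~\ref{L:BW}, every order complex $\Delta(\overline{\Pi^D_n(\le I)})$ is a wedge of spheres, so the unconditional field-coefficient Goresky--MacPherson formula gives equal rational and mod-$p$ Betti numbers of the complement. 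The arrangement is compatibly orientable (the normal bundles of the diagonals $X_I$ are canonically oriented once $X$ is), so Theorem~\ref{theorem:GM}(2) yields an $S_D$-equivariant isomorphism of graded-commutative algebras between $H^*(\widetilde{\Poly}^D_n(\Rb^{N});\Z)$ and $\bigoplus_{q}\bigoplus_{I\in\Pi^D_n}\tilde H_{\cd(I,X)-q-2}(\Delta(\overline{\Pi^D_n(\le I)});\Z)\otimes\coor(X_I)$, carrying the cup product to the intersection product \eqref{intprod} on the order-complex factors and the canonical pairings $\coor(X_I)\otimes\coor(X_J)\to\coor(X_{I\wedge J})$ on the rest.

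\textbf{Globalization and the $E_2$-page.} Next I would assemble the stalkwise computations into an $S_D$-equivariant sheaf isomorphism $R^q\pi_*\Z\cong\bigoplus_{I\in\Pi^D_n}\epsilon_I(q)$. Distributing the tensor product of the per-block decompositions over the direct sums identifies $(R^q\pi_*\Z)_x$, for $\mathcal{Q}(x)=\mathcal{Q}$, with $\bigoplus_{I\le\mathcal{Q}}\tilde H_{\cd(I,X)-q-2}(\Delta(\overline{\Pi^D_n(\le I)}))\otimes\coor(X_I)$, the $I$-summand corresponding to the choice of restricted $n$-equals partitions $I|_{\mathcal{Q}_j}\in\Pi^{\mathcal{Q}_j}_n$; here Proposition~\ref{proposition:kunneth} is exactly what matches the regrouped tensor factors with the intrinsic definition of $\epsilon_I(q)$, and since $\epsilon_I(q)$ is supported on $X_I$ and $x\in X_I$ iff $I\le\mathcal{Q}$, this is also $\big(\bigoplus_I\epsilon_I(q)\big)_x$. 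These local identifications glue, $S_D$-equivariantly, because the Deligne--Goresky--MacPherson isomorphism is induced by a natural morphism of complexes of sheaves (cf.\ \cite[\S1.6]{DGM}) and $S_D$ acts through arrangement-preserving linear maps in each chart (the equivariance in Theorem~\ref{theorem:GM}(1)). Each $\epsilon_I(q)$ is the pushforward $(i_I)_*$ of a constant sheaf along the closed inclusion $i_I\colon X_I\hookrightarrow X^D$, so $\tilde E_2^{p,q}(X,D,n)=H^p(X^D;R^q\pi_*\Z)=\bigoplus_I H^p(X_I;\epsilon_I(q))$. The bigraded algebra structure is inherited: the Leray product on $E_2$ is the cup product coupled with the sheaf multiplication $R^q\pi_*\Z\otimes R^{q'}\pi_*\Z\to R^{q+q'}\pi_*\Z$, which under the identification is locally --- hence globally --- the intersection-product algebra of Theorem~\ref{theorem:GM}(2); the tensor product of the sheaves underlying $\epsilon_I(q)$ and $\epsilon_J(q')$ is supported on $X_I\cap X_J=X_{I\wedge J}$ and the product vanishes unless $X_I$ and $X_J$ meet transversely, so on cohomology one recovers exactly the stated product.

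\textbf{Mixed Hodge structures and the main obstacle.} When $X$ is a smooth complex variety, $X^D$ is smooth and $\widetilde{\Poly}^D_n(X)=X^D\setminus B$ is the complement of a closed subvariety, so the Leray spectral sequence of the open immersion $\pi$ underlies a spectral sequence of mixed Hodge structures (it is the hypercohomology spectral sequence of a complex of mixed Hodge modules; cf.\ \cite{Sa}), and every page carries a functorial mixed Hodge structure. The decomposition $R^q\pi_*\Q\cong\bigoplus_I\epsilon_I(q)$ is, in the category of mixed Hodge modules, a decomposition into summands constant along the smooth subvarieties $X_I$: the Gysin/Thom class identifies $\coor(X_I)$ with the Tate twist $\Q(-\cd_\C X_I)$, pure of weight $2\cd_\C X_I$ and Hodge type $(\cd_\C X_I,\cd_\C X_I)$, while the combinatorial factor $\tilde H_\bullet(\Delta(\overline{\Pi^D_n(\le I)}))$ is an ordinary $\Q$-vector space with trivial mixed Hodge structure; these are the asserted mixed Hodge structures on $\epsilon_I(q)$, and hence on the $E_2$-page. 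I expect the main obstacle to be the globalization step: promoting the stalkwise Goresky--MacPherson decompositions to an honest $S_D$-equivariant direct-sum decomposition of $R^q\pi_*\Z$ over the nested, mutually intersecting diagonals $X_I$ --- this is precisely where one must use that the Deligne--Goresky--MacPherson formula comes from a natural morphism of sheaf complexes, organized block by block via Proposition~\ref{proposition:kunneth}, and where the compatibility of the Leray cup product with the intersection product must be checked.
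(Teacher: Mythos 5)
Your strategy for the additive/algebra isomorphism is essentially the paper's: stratify $X^D$ by equality type, reduce stalkwise to the colored $n$-equals arrangement complement in $\Rb^N$ via Totaro-style local charts, apply Goresky--MacPherson there, and then use Proposition~\ref{proposition:kunneth} to regroup the per-block tensor factors into the intrinsic description of $\epsilon_I(q)$. The one substantive difference is that you work stalk by stalk, whereas the paper assigns a value to the putative sheaf isomorphism on a basis of product opens $U=U_1\times U_2\times\cdots$ (with $U_j$ pairwise equal or disjoint) and then \emph{verifies} that the restriction maps on that basis match, quoting the explicit formula \cite[Equation 1.11.3]{DGM} for the restriction on the Goresky--MacPherson side. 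You acknowledge that promoting the local identifications to a global $S_D$-equivariant sheaf isomorphism is the main obstacle, but your justification (``DGM is induced by a natural morphism of complexes of sheaves'') is really only a pointer toward what must be checked; the paper's basis-of-opens argument is the actual verification of that naturality in this setting, and should be carried out rather than invoked.

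For the mixed Hodge part you take a genuinely different, and more ambitious, route: you assert directly that the decomposition $R^q\pi_*\Q\cong\bigoplus_I\epsilon_I(q)$ is a decomposition in the category of mixed Hodge modules, with $\coor(X_I)$ a Tate twist and the combinatorial factor Hodge-trivial. That is precisely the statement to be proved, and nothing in your argument establishes that the decomposition maps you built (via Goresky--MacPherson and Proposition~\ref{proposition:kunneth}) are morphisms of MHM rather than just of sheaves of $\Q$-vector spaces. The paper sidesteps this: it first observes (using the algebra structure and Proposition~\ref{proposition:kunneth}) that the $E_2$ page is generated as an algebra by $H^*(X^D)$ together with the summands over partitions $I'$ with a single nonsingleton block, so it suffices to compute weights on those generators; it then reduces, via the naturality of the Leray spectral sequence for the projection $X^D\to X^{I_1}$ and an \'etale local model constructed by Noether normalization, to the known Hodge weights of linear subspace arrangement complements as computed by Bj\"orner--Ekedahl \cite[Theorem 4.9]{BE}, invoking Saito \cite{Sa} only for the statement that the pushforward sheaves carry MHS. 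Your route would be cleaner if it worked, but as written it has a gap at the key step; to repair it you would need an argument that the combinatorially-built splitting underlies a splitting of mixed Hodge modules, which is morally what the paper's detour through generators, \'etale local models, and Bj\"orner--Ekedahl is supplying.
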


\begin{proof}[Proof of Theorem \ref{theorem:leray}]
    By the definition of the Leray spectral sequence, the theorem reduces to showing that, given the inclusion $\pi: \widetilde{\Poly}^D_n(X)\ra X^D$, there is an $S_D$-equivariant isomorphism of sheaves of graded algebras:
    $$
        \bigoplus_q R^q \pi_* \Z \cong \bigoplus_q \bigoplus_{I\in  \Pi^D_n} \epsilon_I(q).
    $$
    where the $S_D$ action on the right-hand side is given on the underlying spaces by $\sigma:X_I\to X_{\sigma\cdot I}$ and the map of sheaves $\epsilon_I(q)\to\sigma^*\epsilon_{\sigma\cdot I}(q)$ is given on stalks by
    \[ \epsilon_I(q)_x\cong \tilde{H}_{\cd(I,X)-q-2}(\Delta(\overline{\Pi_n^D(\leq I)});\Z)\to^{\sigma_*} \tilde{H}_{\cd(\sigma\cdot I,X)-q-2}(\Delta(\overline{\Pi_n^D(\leq \sigma\cdot I)});\Z)\cong \epsilon_{\sigma\cdot I}(q)_{\sigma\cdot x}\]
    where the first and last isomorphisms are those of Proposition \ref{proposition:kunneth}.

    For each $q$, we will give an $S_D$-equivariant map of sheaves
    $$
        E:=\bigoplus_{I\in  \Pi^D_n} \epsilon_I(q) \ra  R^q \pi_* \Z.
    $$
  We give the map of sheaves by giving it on the basis of open sets on $X^D$ consisting of all sets of the form $U=U_1 \times U_2 \times \cdots$ where each $U_j$ is a small, nice contractible open and such that for each $j,k$ either $U_j=U_k$ or $U_j\cap U_k=\emptyset$.

    To such a $U$ we can associate a partition $J$ of $D$ according to which $U_i$ are equal.  Then, as in \cite[Proof of Theorem 1]{To} we have
    \begin{equation}\label{eq:product1}
        H^q(U \cap \widetilde{\Poly}^D_n(X);\Z )\cong H^q(\widetilde{\Poly}^{J_1}_n(\Rb^{N}) \times \widetilde{\Poly}^{J_2}_n(\Rb^{N}) \times \cdots
        ;\Zb).
    \end{equation}
    By the K\"unneth isomorphism, we have
    \begin{equation}
        \label{eq:kun1}
          H^q(\widetilde{\Poly}^{J_1}_n(\Rb^{N}) \times \widetilde{\Poly}^{J_2}_n(\Rb^{N}) \times \cdots
        ;\Zb)\iso  \bigoplus_{i_1+\cdots +i_\ell=q}\bigotimes_{a=1}^{\ell}H^{i_a}(\widetilde{\Poly}^{J_a}_n(\Rb^{N});\Zb).
    \end{equation}

For $N\ge 2$, the colored $n$-equals arrangement $\mathcal{A}^{N,D}_n$  is easily checked to satisfy the  codimension  assumptions of the Goresky-MacPherson formula
(Theorem \ref{theorem:GM}), and thus, for each $i\geq 0$, there is an $S_D$-equivariant isomorphism of graded algebras:
    \begin{align}
     \label{eq:GMplusBW}
        \bigoplus_{i_a} H^{i_a}(\widetilde{\Poly}^{J_a}_n(\Rb^{N});\Zb)\cong \bigoplus_{i_a} \bigoplus_{I_a\in\Pi^{J_a}_n}
        \widetilde{H}_{\cd(I_a,X)-i_a-2}(\Delta(\overline{\Pi_n^{J_a}(\leq I_a)});\Z)\tensor \coor(X_{I_a})
    \end{align}
    Plugging this in to \eqref{eq:kun1} and distributing terms yields an isomorphism of graded algebras:

    \begin{equation}\label{eq:kun2}
        \bigoplus_q  H^q(U \cap \widetilde{\Poly}^D_n(X);\Z )\iso \bigoplus_q \bigoplus_{(I_1,\ldots,I_\ell)\in\Pi^{J_1}_n\times\cdots\times\Pi^{J_\ell}_n}\bigoplus_{i_1+\cdots +i_\ell=q}\bigotimes_{a=1}^\ell\tilde{H}_{\cd(I_a,X)-i_a-2}(\Delta(\overline{\Pi_n^{J_a}(\leq I_a)});\Z)\tensor \coor(X_{I_a})
    \end{equation}

    By Proposition \ref{proposition:kunneth}, the right-hand side of \eqref{eq:kun2} is isomorphic as a graded algebra to :

    $$
        \bigoplus_q\bigoplus_{I\in  \Pi^D_n(\leq J)} \tilde{H}_{\cd(I,X)-q-2}(\Delta(\overline{\Pi_n^D(\leq I)});\Z)\tensor \coor(X_{I})
    $$
    which 
    is isomorphic as a graded algebra to $\bigoplus_q\bigoplus_{I\in\Pi^D_n}\epsilon_I(q)(U)$.

If $V$ is another open set in our basis, with associated partition $K$, then if there is an inclusion $V\to U$, then $K\leq J$.
The map $V\to U$ induces a homomorphism
    \begin{align}\label{eq:restriction:morphism}
        H^q(U \cap \widetilde{\Poly}^D_n(X);\Z)&\to H^q(V \cap \widetilde{\Poly}^D_n(X);\Z )\\
 \bigoplus_{I\in  \Pi^D_n(\leq J)} \tilde{H}_{\cd(I,X)-q-2}(\Delta(\overline{\Pi_n^D(\leq I)});\Z\tensor \coor(X_{I}) &\to \notag
  \bigoplus_{I\in  \Pi^D_n(\leq K)} \tilde{H}_{\cd(I,X)-q-2}(\Delta(\overline{\Pi_n^D(\leq I)});\Z)\tensor \coor(X_{I})
        .
    \end{align}
    From \cite[Equation 1.11.3]{DGM}, we see that this morphism is just given by sending each summand where $I\not\leq K$ to $0$, and is the identity map on summands where $I \leq K$.  This agrees with the restriction map $\bigoplus_{I\in\Pi^D_n}\epsilon_I(q)(U)\to \bigoplus_{I\in\Pi^D_n}\epsilon_I(q)(V)$, so gives an isomorphism of sheaves of algebras.

We now prove the second statement of the theorem.  Our computations above, combined with
 Proposition~\ref{proposition:kunneth}, give that the $E_2$ page of the Leray spectral sequence is generated by $H^*(X^D)$ together with $\bigoplus_{I'}H^*(X_{I'};\epsilon_{I'}(q))$, where the direct sum is over partitions $I'\in \Pi^D_n$  with precisely one nonsingleton block.   It is therefore enough to compute the weights on these summands.

 The summand $H^*(X^D)$ is just the restriction along the inclusion $\widetilde{\Poly}^D_n(X)\into X^D$, and so carries its canonical mixed Hodge structure, as claimed.  Now consider each of the other summands $H^*(X_{I'};\epsilon_{I'}(q))$.  Write $I'$ as $I_1$ plus singletons.  There is a commutative diagram of varieties:

 \[
 \begin{array}{lll}
 \widetilde{\Poly}^D_n(X)&\into& X^D\\
 \downarrow& &\downarrow\\
  \widetilde{\Poly}^{I_1}_n(X)&\into_j& X^{I_1}\\
 \end{array}
 \]
 By the naturality of the Leray spectral sequence, this commutative diagram induces a map of Leray spectral sequences from that of the bottom row to that of the top row.   By the first part of the theorem, the image of this map equals $\bigoplus_{J\leq I'}H^*(X_{J};\epsilon_{J}(q))$.   It thus suffices to compute the Hodge type for the $E_2$ page of the Leray spectral sequence for the inclusion $j_{X,I_1}\colon \widetilde{\Poly}^{I_1}_n(X)\into X^{I_1}$.  This will follow from a standard argument, which we now recall for the sake of the reader.

 For simplicity of notation, let $W:=\C^{\dim_\C(X)}$.  Recall that for a colored, $n$-equals partition
 $J\in\Pi^{I_1}_n$, we denote by $W_J$ be the linear subspace in $W^{I_1}$ defined by setting coordinates to be equal as determined by the partition $J$.  For $x\in X^{I_1}-\widetilde{\Poly}^{I_1}_n(X)$, a standard argument using Noether Normalization shows that there is an \'{e}tale neighborhood $f:U\to X^{I_1}$ of $x$ in $X^{I_1}$ that admits an \'{e}tale map $\pi:U\to W^{I_1}$ with the property that $\pi^{-1}(W_J)=f^{-1}(X_J)$ for each $J\in \Pi^{I_1}_n$.
 So for $x\in X_J$ we have an isomorphism
 \[(R^q{j_{X,I_1}}_*\Z)_x\to^\cong R^q(j_{W,J})_* \Z)_{y}.\]
 where the right hand side denotes the stalk at a generic $y\in W_J$ of the push-forward of the constant sheaf along the inclusion \[j_{W,J}\colon\widetilde{\Poly}^{J_1}_n(W)\times W^{I_1\setminus J_1}\into W^{I_1}.\]  Now $\widetilde{\Poly}^{J_1}_n(W)\times W^{I_1\setminus J_1}$ is just a linear subspace complement, and so the Hodge types are known by work of Bj\"orner-Ekedahl \cite[Theorem 4.9]{BE}.\footnote{Theorem 4.9 of \cite{BE} is stated for \'etale cohomology of arrangements over finite fields. However, as observed in the last line of p. 168 of \emph{loc. cit.}, the arguments give the analogous statement for mixed Hodge structures of the cohomology of arrangements over $\C$.  This is also directly addressed in Example 1.14 of \cite{DGM}.} Because $W^{I_1}\cong(\Ab^r)^{I_1}$ is acyclic, the $E_2$ page of the Leray spectral sequence for $j_{W,J}$ is precisely $H^0$ of the pushforward sheaves $R^q(j_{W,j})_\ast \Z$.  Further, the work of Bj\"orner-Ekedahl identifies the $E_2$-page with the cohomology of the arrangement.  Taken together, this gives that
 \begin{equation*}
    (R^q(j_{X,I_1})_\ast \Z)_x\cong \bigoplus_{K\le J}H_{\cd(K,X)-q-2}(\Delta(\overline{\Pi^{I_1}_n(\le K)});\Z)
    \otimes\coor(W_K).
 \end{equation*}
 which in turn equals the stalk at $x$ of the sheaf $\bigoplus_{K\leq I'}\epsilon_K(q)$; see Theorem 0.1 of \cite{Sa} for the fact that this sheaf is a mixed Hodge module, and therefore its stalks are endowed with mixed Hodge structures.   Note that the codimension of $W_K$ in $W^{I_1}$ equals the codimension of
 $X_{K}$ in $X^{D}$.   Combined with the above, this gives an isomorphism of sheaves
 \[R^q_{j_\ast}\Z\cong\bigoplus_{K\leq I'}\epsilon_K(q)\]
 This completes the proof of the second statement of the theorem.
 \end{proof}

\section{The Local Computation: \texorpdfstring{$X=\Rb^{N}$}{Affine Space}}
\label{section:local}

Given a colored set $D$, define
\begin{equation*}
    \bd:=(|D(1)|,\ldots,|D(m)|)\in\Nb^m
\end{equation*}
where $|D(i)|$ denotes the number of elements of $D$ of color $i$. The permutation action of $S_D$ on $X^D$ leaves invariant $\widetilde{\Poly}^D_n(X)$.  We denote the quotient space by
\[\Poly^{\bd}_n(X):=\widetilde{\Poly}^D_n(X)/S_D.  \]

The goal of this section is to compute $H^i(\Poly_n^{\bd}(\Rb^{N});\Qb)$ for $N\ge 2$.    If we had an explicit presentation of the algebra $H^*(\widetilde{\Poly}_n^{\bd}(\Rb^{N});\Qb)$, one might hope to compute the $S_D$-invariants directly,  thus giving $H^i(\Poly_n^{\bd}(\Rb^{N});\Qb)$ by transfer.
We do not know such a presentation.  Instead, we induct on a canonical filtration of $\Sym^{\bd}(\Rb^{N})$, extending arguments in \cite{FW}.  The method goes back to Arnol'd \cite{Ar} , and Segal \cite{Se}.

Given $\ell$, let $\bd+\ell$ denote the vector $(d_1+\ell,\ldots,d_m+\ell)$, let $\bd+\ell_i$ denote $(d_1,\ldots,d_i+\ell,\ldots,d_m)$, and let $\ell\cdot\bd$ denote the vector $(\ell d_1,\ldots,\ell d_m)$.  Our goal in this section is to prove the following.

\begin{theorem}[{\bf Local computation}]
\label{theorem:local}
    Fix $r\ge 1$, $n$, $m$ and $\bd$ with $d_i\ge n$ for all $i$.
    \begin{enumerate}
        \item If $N=2r+1$, then
            \begin{equation}
                \label{eq:bettinums2od}
                H^i(\Poly_n^{\bd}(\Rb^{2r+1});\Qb)\cong
                    \left\{
                    \begin{array}{ll}
                        \Qb & i=0\\
                        0 & \text{else}
                    \end{array}\right.
            \end{equation}
        \item If $N=2r$, then
           \begin{equation}
            \label{eq:bettinums2ev}
                H^i(\Poly_n^{\bd}(\Rb^{2r});\Qb)\cong
                    \left\{
                    \begin{array}{ll}
                        \Qb & i=2r(mn-1)-1\\
                        \Qb & i=0\\
                        0 & \text{else}
                    \end{array}\right.
            \end{equation}
    \end{enumerate}
\end{theorem}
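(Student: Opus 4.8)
The plan is to realize $\Poly^{\bd}_n(\Rb^N)$ as the complement of a ``discriminant'' in the contractible space $\Sym^{\bd}(\Rb^N)$ and then to compute by filtering the discriminant, following the strategy of Arnol'd \cite{Ar70b}, Segal \cite{Se} and Vassiliev \cite{Va}, and directly extending the argument of Farb--Wolfson \cite{FW}. First, $\Sym^{\bd}(\Rb^N)=\prod_i\Sym^{d_i}(\Rb^N)$ is contractible: the radial homotopy $(D,t)\mapsto t\cdot D$, scaling every point of the cycle $D$ toward the origin, deformation retracts it onto the cycle $|\bd|\cdot\{0\}$. Let $\Sigma=\Sigma^{\bd}_n(\Rb^N)\subseteq\Sym^{\bd}(\Rb^N)$ be the closed discriminant of cycles possessing a \emph{bad point} --- a point $x$ with $\mathrm{mult}_x(D_i)\ge n$ for every color $i$ --- so that $\Poly^{\bd}_n(\Rb^N)=\Sym^{\bd}(\Rb^N)\setminus\Sigma$. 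Since $\Sigma$ has codimension $\ge N(mn-1)\ge 2$, the space $\Poly^{\bd}_n(\Rb^N)$ is connected (so $H^0=\Qb$); and since $\Sym^{\bd}(\Rb^N)$ is contractible, the long exact sequence of the pair $(\Sym^{\bd}(\Rb^N),\Poly^{\bd}_n(\Rb^N))$ gives $H^i(\Poly^{\bd}_n(\Rb^N);\Qb)\cong H^{i+1}(\Sym^{\bd}(\Rb^N),\Poly^{\bd}_n(\Rb^N);\Qb)=H^{i+1}_{\Sigma}(\Sym^{\bd}(\Rb^N);\Qb)$ for $i\ge 1$. So the whole problem is to compute the local cohomology of $\Sym^{\bd}(\Rb^N)$ along $\Sigma$.

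To compute this I would filter $\Sigma=\Sigma_1\supseteq\Sigma_2\supseteq\cdots\supseteq\Sigma_K$, $K=\min_i\lfloor d_i/n\rfloor$, with $\Sigma_k$ the closure of the locus of cycles with at least $k$ bad points, yielding cofiber sequences (a spectral sequence) expressing $H^\ast_\Sigma(\Sym^{\bd}(\Rb^N);\Qb)$ in terms of the local cohomology along each locally closed ``depth-$k$'' stratum $\Sigma_k\setminus\Sigma_{k+1}$. Each such stratum is, up to the (Vassiliev-type) resolution of the collisions sitting at the bad points, a bundle over
\[
\UConf_k(\Rb^N)\times\Sym^{\bd-nk\cdot\mathbf 1}(\Rb^N),
\]
recording the positions of the $k$ bad points --- at each of which there is a forced minimal collision of $n$ points of every color --- together with a ``leftover'' cycle; crucially, the leftover factor $\Sym^{\bd-nk\cdot\mathbf 1}(\Rb^N)$ is again contractible by radial scaling, so it does not contribute, and (after the orientation bookkeeping of the Thom isomorphism, which introduces a rank-one local system $\mathcal L_k$ on $\UConf_k(\Rb^N)$ twisting by the permutation of the bad points) the depth-$k$ term of the spectral sequence is a degree shift of $H^\ast(\UConf_k(\Rb^N);\mathcal L_k)$. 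The truncation $k\le K$ is the only place the size of $\bd$ enters. This is exactly the mechanism by which one computes the cohomology of unordered configuration spaces and of bounded symmetric powers; the new features relative to those cases are the presence of $m$ colors and of unequal $d_i$, and the non-manifold points of $\Sym^{\bd}(\Rb^N)$ for $N>2$, which the resolution is designed to absorb.

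Running the spectral sequence gives the two cases of the theorem. For $N=2r$ even: $\UConf_1(\Rb^{2r})$ is contractible and the codimension of the depth-$1$ stratum is $2r(mn-1)$, so it contributes exactly one class, in cohomological degree $2r(mn-1)-1$; one then shows every deeper contribution ($k\ge 2$, together with the depth-$1$ contributions from bad points of multiplicity $>n$ in some color) is cancelled by the differentials. This cancellation is the heart of the argument and I would prove it by induction on $|\bd|$, the base case $\bd=(n,\ldots,n)$ being immediate since there $\Sigma$ is the single diagonal $\{(n\cdot x,\ldots,n\cdot x):x\in\Rb^{2r}\}\cong\Rb^{2r}$ of codimension $2r(mn-1)$. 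With $H^0=\Qb$ this gives \eqref{eq:bettinums2ev}. For $N=2r+1$ odd: the $\Z/2$ swapping two of the colliding points reverses the orientation of the (odd-rank) normal directions, so it acts by $-1$ and all diagonal contributions to $H^\ast_\Sigma(\Sym^{\bd}(\Rb^{2r+1});\Qb)$ vanish rationally --- equivalently the would-be generators are $2$-torsion, the model case being $\UConf_2(\Rb^{2r+1})\simeq\RP^{2r}$ with $\widetilde H^\ast(\RP^{2r};\Qb)=0$ --- so $\Poly^{\bd}_n(\Rb^{2r+1})$ is rationally acyclic, which is \eqref{eq:bettinums2od}.

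The step I expect to be the main obstacle is the cancellation just flagged for even $N$: cycles may have many bad points, each carrying an arbitrary multiplicity $\ge n$ in each of the $m$ colors, with the leftover cycle free to pile up at those points, and one must organize all of these strata --- and their normal (cone, not vector-bundle) structure inside the non-manifold space $\Sym^{\bd}(\Rb^N)$ --- and check that the resulting terms assemble into complexes acyclic away from the two ends of the range. This bookkeeping, and the attendant signs, is precisely what is invisible in the classical configuration-space case $(m,n)=(1,2)$, where there is a single bad point of a single color. (One could in principle instead deduce Theorem~\ref{theorem:local} by transfer from the Goresky--MacPherson description of $H^\ast(\widetilde{\Poly}^D_n(\Rb^N);\Qb)$ implicit in Theorem~\ref{theorem:leray}; but extracting the $S_D$-invariants that way would require the $S_D$-module structure, not merely the Betti numbers, so the discriminant-filtration argument is preferable here, and the full poset combinatorics of Section~\ref{section:nequals} is deferred to the treatment of general $X$.)
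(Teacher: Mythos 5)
Your overall strategy — express $\Poly^{\bd}_n(\Rb^N)$ as the complement of a discriminant in the contractible space $\Sym^{\bd}(\Rb^N)$, filter the discriminant, and run the resulting cofiber sequences — is the right genre, and it is what the paper does (though the paper works throughout in compactly supported cohomology via transfer and Poincar\'e duality on the oriented manifold $\widetilde\Poly^D_n(\Rb^N)$, rather than in local cohomology on the non-manifold $\Sym^{\bd}(\Rb^N)$, which avoids some delicacy with the singular points for $r>1$). Your account of the odd case also captures the right idea: the paper packages it as the vanishing of $\bigl(H^*_c(\widetilde R^D_{n,1}(\Rb^N)/A_D;\Q)\otimes\Q_{\mathrm{sgn}}\bigr)^{S_2}$, forced because $S_2$ acts trivially on $\widetilde R^D_{n,1}/A_D$ (any point of the discriminant admits a bad-point transposition), which is essentially the orientation-reversal you describe.

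The genuine gap is in the filtration for the even case. You propose filtering by $\Sigma_k$, the closure of the locus with at least $k$ \emph{distinct} bad points, and you describe the depth-$k$ stratum as (up to resolution) a bundle over $\UConf_k(\Rb^N)\times\Sym^{\bd-nk\cdot\vec 1}(\Rb^N)$. This is not the stratification the paper uses, and it does not have the structure you need: once $k$ distinct bad points are fixed, the local multiplicities at each bad point are unbounded above (only bounded below by $n$ in each color), so the stratum decomposes into pieces indexed by those multiplicity vectors and is not a bundle over $\UConf_k\times\Sym^{\bd-nk\cdot\vec 1}$. You flag the resulting cancellation as the heart of the argument, but with this filtration you have deferred it rather than set it up. The paper instead filters by the \emph{degree of the maximal common $n$-th-power divisor}: $R^{\bd}_{n,k}$ consists of tuples writable as $D_i=C_i+nD$ with $\deg D\ge k$. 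The advantage is that the open stratum is genuinely a product,
\begin{equation*}
  R^{\bd}_{n,k}\setminus R^{\bd}_{n,k+1}\ \cong\ \Poly^{\bd-nk\cdot\vec 1}_n(\Rb^{2r})\times\Sym^{k}(\Rb^{2r}),
\end{equation*}
because the factorization $D_i=C_i+nD$ with $D$ the maximal common $n$-th-power divisor is unique and forces $(C_i)\in\Poly^{\bd-nk\cdot\vec 1}_n$. That product structure interfaces with the ``bringing zeroes in from infinity'' stabilization maps $\Poly^{\bd}_n\times\Rb^{2r}\to\Poly^{\bd+1_i}_n$ and $R^{\bd}_{n,k}\times\Rb^{2r}\to R^{\bd+1_i}_{n,k}$, and the Five Lemma applied to the induced maps of cofiber sequences (with $\Sym^k(\Rb^{2r})$ contributing a single class in one degree of $H^*_c$) runs a clean double induction on $\bd$ and downward on $k$. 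Your proposal cites \cite{FW} as the model but does not actually follow its filtration or use its stabilization maps; I would replace the $\UConf_k$-based stratification with the common-divisor filtration $R^{\bd}_{n,k}$ to recover the inductive mechanism that does the work.
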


\subsection{Proof of Theorem~\ref{theorem:local}: the top level}
    Let $\Qb_{\rm or}$ denote the orientation sheaf on $\widetilde{\Poly}^D_n(\Rb^N)$. Because $\widetilde{\Poly^D}_n(\Rb^N)$ is an oriented manifold, transfer followed by Poincar\`e duality gives:
    \begin{align*}
        H^\ast(\Poly^{\bd}_n(\Rb^N);\Qb)&\cong H^\ast(\widetilde{\Poly}^D_n(\Rb^N);\Qb)^{S_D}\\
        &\cong (H^{N|D|-\ast}_c(\widetilde{\Poly}^D_n(\Rb^N);\Qb)\otimes\Qb_{\rm or})^{S_D}.
    \end{align*}
    The $S_D$ action on $\Qb_{\rm or}$ is given by
    \begin{equation*}
        \Qb_{or}:=\left\{
            \begin{array}{cc}
                \Qb_{\rm sgn} & N\text{ odd}\\
                \Qb_{\rm triv} & N\text{ even}
            \end{array}\right.
    \end{equation*}
    where $\Qb_{\rm sgn}$ and $\Qb_{\rm triv}$ are the restrictions to $S_D\subset S_{|D|}$ of the sign and trivial representations.  This is the critical place where the even and odd dimensional cases differ.

\subsection*{Case 1: $N>1$ odd.}

     Let $A_D:=S_D\cap A_{|D|}\subset S_{|D|}$. By transfer and the fact that $S_D$ acts on $\Qb_{or}$ by the sign representation, we have that
    \begin{equation}\label{eq:alt}
        (H^\ast_c(\widetilde{\Poly}^D_n(\Rb^N);\Qb)\otimes\Qb_{\rm or})^{S_D}\cong (H^\ast_c(\widetilde{\Poly}^D_n(\Rb^N)/A_{D};\Qb)\otimes\Qb_{\rm or})^{S_2}.
    \end{equation}
    Define $\widetilde{R}^D_{n,1}(\Rb^N):=(\Rb^N)^D-\widetilde{\Poly}^D_n(\Rb^N)$. The open embedding
    \begin{equation*}
        \widetilde{\Poly}^D_n(\Rb^N)\to (\Rb^N)^D
    \end{equation*}
    gives rise to a long exact sequence in compactly supported cohomology
    \begin{equation*}
        \cdots\to H^i_c(\widetilde{\Poly}^D_n(\Rb^N);\Qb)\to H^i_c((\Rb^N)^D;\Qb)\to H^i_c(\widetilde{R}^D_{n,1}(\Rb^N);\Qb)\to H_c^{i+1}(\widetilde{\Poly}^D_n(\Rb^N);\Qb)\to\cdots
    \end{equation*}
    By Equation \eqref{eq:alt} and Poincar\`e duality, the theorem for $N>1$ odd is equivalent to :
    \[(H^\ast_c(\widetilde{R}^D_{n,1}(\Rb^N)/A_D;\Qb)\otimes\Qb_{\rm sgn})^{S_2}=0.\]

To see this, it suffices to observe that $S_2$ acts trivially on the space $\widetilde{R}^D_{n,1}(\Rb^N)/A_D$. Indeed, by definition, any $x\in \widetilde{R}^D_{n,1}(\Rb^N)$ has at least two coordinates, say $a_i$ and $b_i$,  of each color $i$ being equal. On the other hand, the $S_2$-action on the orbit space $\widetilde{R}^D_{n,1}(\Rb^N)/A_D$ is given by applying \emph{any} transposition to an orbit representative. Picking the transposition $(a_1~b_1)$, we see that $S_2\cdot [x]=[x]$ as claimed. This completes the proof of Case 1.

 \subsection*{Case 2: $N>1$ even.}

 Now let $N=2r$. We will make repeated use of the fact that for any $k$,
    \begin{equation*}
        H^\ast(\Sym^k(\Rb^{2r});\Qb)=\left\{
        \begin{array}{lr}
            \Qb & \ast=0\\
            0 & \text{else}
        \end{array}\right.
    \end{equation*}
    by transfer, and, similarly, for any space $X$,
    \begin{equation*}
        H^\ast_c(X\times\Sym^k(\Rb^{2r});\Qb)\cong H^{\ast-2rk}_c(X;\Qb).
    \end{equation*}
    Now, as observed above, by Poincar\'{e} duality and transfer it suffices to prove the version in compactly supported cohomology.

    Our argument follows the lines of the argument in \cite{FW}, which itself is an extension of the arguments in Segal \cite{Se}.  Recall the filtration:
    \begin{equation*}
        \Rb^{2r|\bd|}=R_{n,0}^{\bd}(\Rb^{2r})\supset R_{n,1}^{\bd}(\Rb^{2r})\supset\cdots \supset \emptyset.
    \end{equation*}
    with $R^{\bd}_{n,k}(\Rb^{2r})$ the space of $m$-tuples $(D_1,\ldots,D_m)$ of effective 0-cycles on $\Rb^{2r}$, with $\deg(D_i)=d_i$, for which there exists an effective $0$-cycle $D$ of degree at least $k$, and effective $0$-cycles $C_i$, such that $D_i=C_i+nD$ for each $i$.    By the same arguments as in \cite{FW}, there are homeomorphisms
    \begin{equation*}
        R^{\bd}_{n,k}(\Rb^{2r})-R^{\bd}_{n,k+1}(\Rb^{2r})\cong\Poly^{\bd-nk}_n(\Rb^{2r})\times \Sym^k(\Rb^{2r}).
    \end{equation*}
    Since $N=2r$ and since $\Qb_{\rm or}\cong \Qb_{\rm triv}$ as $S_D$-equivariant sheaves, to prove the theorem it is enough to prove Equation \eqref{eq:cbettinums2} in the following.
    \begin{proposition}\label{proposition:zcoho1}
        Let $r\geq 1$. Let $\bd\ge n\cdot\vec{1}$ and $k\le \lfloor \frac{\min_i d_i}{n}\rfloor$. Then
        \begin{equation}
            \label{eq:cbettinums2}
                H^i_c(\Poly_n^{\bd}(\Rb^{2r});\Qb)\cong
                \left\{
                    \begin{array}{ll}
                        \Qb & i=2r(|\bd|-mn+1)+1\\
                        \Qb & i=2r|\bd|\\
                        0 & \text{else}
                    \end{array}\right.
        \end{equation}
        \begin{equation*}
            H^i_c(R_{n,k}^{\bd}(\Rb^{2r});\Qb)\cong
                \left\{
                    \begin{array}{ll}
                        \Qb & i=2r(|\bd|-k(mn-1))\\
                        0 & \text{else}
                    \end{array}\right.
        \end{equation*}
        and for each $j\geq 1$ the continuous open embeddings
        \begin{align}\label{stabS}
            \Poly_n^{\bd}(\Rb^{2r})\times\Rb^{2r}&\to\Poly_n^{\bd+1_j}(\Rb^{2r})\\
            \label{stabF}
             R_{n,k}^{\bd}(\Rb^{2r})\times\Rb^{2r}&\to R_{n,k}^{\bd+1_j}(\Rb^{2r})
       \end{align}
        given by ``bringing zeroes in from infinity'' induce isomorphisms on compactly supported rational cohomology.
    \end{proposition}
    Our goal in the rest of this section is to prove Proposition~\ref{proposition:zcoho1}, and thus
    Theorem~\ref{theorem:local}.

  \subsection{Proof of Proposition~\ref{proposition:zcoho1}}
  We prove the proposition by induction on $(\bd,k)$, ordered lexicographically upward on the entries in $\bd$ and downward on $k$.

  For the base case $\bd=n\cdot\vec{1}:=(n,\ldots,n)$, the isomorphism \eqref{eq:cbettinums2} follows immediately from the isomorphism
  \begin{equation*}
    \Poly_n^{n\cdot\vec{1}}(\Rb^{2r})\cong\Sym^{\vec{n}}(\Rb^{2r})-\Rb^{2r}\cong \Sym^n(\Rb^{2r})^m-\Rb^{2r}.
  \end{equation*}
  Similarly, we have isomorphisms
    \begin{align*}
        R_{n,1}^{n\cdot\vec{1}}&\cong\Rb^{2r}\\
        R_{n,0}^{n\cdot\vec{1}}&\cong\Sym^{n\cdot\vec{1}}(\Rb^{2r})\cong\Sym^n(\Rb^{2r})^m
    \end{align*}
  and a map of cofiber sequences
    \begin{equation*}
        \xymatrix{
            R_{n,1}^{n\cdot\vec{1}+1_i}(\Rb^{2r})^+ \ar[r] \ar[d] & R_{n,0}^{n\cdot\vec{1}+1_i}(\Rb^{2r})^+ \ar[r] \ar[d] & (\Poly_n^{n\cdot\vec{1}+1_i}(\Rb^{2r}))^+ \ar[d]\\
                (R_{n,1}^{n\cdot\vec{1}}(\Rb^{2r})\times\Rb^{2r})^+ \ar[r] & (R_{n,0}^{n\cdot\vec{1}}(\Rb^{2r})\times\Rb^{2r})^+ \ar[r] & (\Poly_n^{n\cdot\vec{1}}(\Rb^{2r})\times\Rb^{2r})^+
            }
    \end{equation*}
    where $X^+$ denotes the 1-point compactification of $X$. This is homeomorphic to
    \begin{equation*}
            \xymatrix{
                (\Rb^{4r})^+ \ar[r] \ar[d] & (\Sym^{n\cdot\vec{1}+1_i}\Rb^{2r})^+ \ar[r] \ar[d] & (\Poly_n^{n\cdot\vec{1}+1_i}(\Rb^{2r}))^+ \ar[d]\\
                (\Rb^{2r}\times\Rb^{2r})^+ \ar[r] & (\Sym^{n\cdot\vec{1}}(\Rb^{2r})\times\Rb^{2r})^+ \ar[r] & (\Poly_n^{n\cdot\vec{1}}(\Rb^{2r})\times\Rb^{2r})^+
            }
    \end{equation*}
    Because the first two vertical maps induce isomorphisms in cohomology, the Five Lemma (applied to the map of long exact sequences in cohomology) shows that the right vertical map induces a cohomology isomorphism as well.  This establishes the base case of the induction.

    Given these base cases, the main claim \eqref{eq:cbettinums2} of the proposition follows from the claim that \eqref{stabS} is an isomorphism. Suppose we have proved the isomorphism \eqref{stabF} for all $(\bd',k)$. Consider the map of cofiber sequences
    \begin{equation*}
        \xymatrix{
            R_{n,1}^{\bd+1_i}(\Rb^{2r})^+ \ar[r] \ar[d] & (\Sym^{\bd+1_i}(\Rb^{2r}))^+ \ar[r] \ar[d] & (\Poly_n^{\bd+1_i}(\Rb^{2r}))^+ \ar[d]\\
            (R_{n,1}^{\bd}(\Rb^{2r})\times\Rb^{2r})^+ \ar[r] & (\Sym^{\bd}(\Rb^{2r})\times\Rb^{2r})^+ \ar[r] & (\Poly_n^{\bd}(\Rb^{2r})\times\Rb^{2r})^+
        }
    \end{equation*}
    and note that the center column is just the case of \eqref{stabF} for $k=0$. Our assumption on \eqref{stabF} gives that the left-hand and central vertical maps induce isomorphisms on cohomology.  Applying the Five Lemma to the long exact sequence in cohomology gives that the right-hand vertical map induces an isomorphism on cohomology, proving the isomorphism \eqref{stabS}.

    It remains to prove the formula for the compactly supported cohomology of $R^{\bd}_{n,k}(\Rb^{2r})$ and the isomorphism \eqref{stabF}. We have already shown the base case. For the inductive step, suppose now that we have shown the proposition for all $(\bd',k)$ with $\bd'<\bd$. We will show, by downward induction on $k$, that the proposition holds for $(\bd,k)$ for all $k$. To prove this, we consider two cases.

    \para{Case 1: \boldmath$n\nmid \bd+1_i$} This case means that $\bd+1_i\neq n\cdot\bd'$ for some $\bd'$. For the base case of the downward induction on $k$, i.e. $k=\lfloor\frac{\min_i d_i}{n}\rfloor$, a similar observation to the above shows that the map \eqref{stabF} induces an isomorphism on compactly supported cohomology.

    Now suppose we have shown that \eqref{stabF} induces such an isomorphism for $\bd+1_i$ and $k+1>1$. Observe that the ``bringing in zeroes'' maps fit together to give a continuous map of cofiber sequences
    \begin{equation*}
        \xymatrix{
                R_{n,k+1}^{\bd+1_i}(\Rb^{2r})^+ \ar[r] \ar[d] & R_{n,k}^{\bd+1_i}(\Rb^{2r})^+ \ar[r] \ar[d] & (\Poly_n^{(\bd+1_i)-kn}(\Rb^{2r})\times\Sym^k(\Rb^{2r}))^+ \ar[d]\\
                (R_{n,k+1}^{\bd}(\Rb^{2r})\times\Rb^{2r})^+ \ar[r] & (R_{n,k}^{\bd}(\Rb^{2r})\times\Rb^{2r})^+ \ar[r] & (\Poly_n^{\bd-kn}(\Rb^{2r})\times\Sym^k(\Rb^{2r})\times\Rb^{2r})^+
            }
    \end{equation*}
    Our inductive hypotheses show that the left and right vertical maps induce isomorphisms on cohomology. Applying the Five Lemma to the long exact sequences in cohomology gives that the central map is an isomorphism in cohomology. This concludes the induction step, and thus the proof, when $n\nmid \bd+1_i$.

    \para{Case 2: \boldmath$\bd+1_i=an\cdot\vec{1}$ for \boldmath$a>1$}  In this case, the induction proceeds as above, once we establish the cases $k=a$ and $k=a-1$. The claim about the cohomology of $R_{n,a}^{an\cdot\vec{1}}$ follows from the isomorphism
    \begin{equation*}
        R_{n,a}^{an\cdot\vec{1}}\cong \Sym^a(\Rb^{2r}).
    \end{equation*}
    For $k=a-1$, the identification
    \begin{equation*}
        R_{n,a-1}^{an\cdot\vec{1}}-R_{n,a}^{an\cdot\vec{1}}\cong \Poly_n^{n\cdot\vec{1}}(\Rb^{2r})\times\Sym^{a-1}(\Rb^{2r})\cong (\Sym^{n\cdot\vec{1}}(\Rb^{2r})-\Rb^{2r})\times\Sym^{a-1}(\Rb^{2r})
    \end{equation*}
    gives rise to the long exact sequence in compactly supported cohomology
        \begin{equation*}
            \cdots\to H^{p-2r(a-1)}_c(\Sym^{n\cdot\vec{1}}(\Rb^{2r})-\Rb^{2r};\Qb)\to H^p_c(R_{n,a-1}^{an\cdot\vec{1}}(\Rb^{2r});\Qb)\to H^p_c(\Sym^a(\Rb^{2r});\Qb)\to^\partial\cdots
        \end{equation*}
        This implies that
        \begin{equation*}
            H^p_c(R_{n,a-1}^{an\cdot\vec{1}}(\Rb^{2r});\Qb)\cong
                \left\{
                    \begin{array}{ll}
                        0 & p<2ra\\
                        0 & 2ra+1<p<2r(mn+a-1)\\
                        \Qb & p=2r(mn+a-1)\\
                        0 & p>2r(mn+a-1)
                    \end{array}\right.
        \end{equation*}
        This leaves the cases $p=2ra$ and $p=2ra+1$. For these, we have a long exact sequence
        \begin{align*}
            0\to H^{2ra}_c(R_{n,a-1}^{an\cdot\vec{1}}(\Rb^{2r});\Qb)\to H^{2ra}_c(\Rb^{2ra};\Qb)\to^\partial &H^{2ra+1}_c((\Sym^{n\cdot\vec{1}}(\Rb^{2r})-\Rb^{2r})\times\Sym^{a-1}(\Rb^{2r});\Qb)\\
            &\to H^{2ra+1}_c(R_{n,a-1}^{an\cdot\vec{1}}(\Rb^{2r});\Qb)\to 0.
        \end{align*}
        It suffices to show that the boundary map is an isomorphism. To see this, consider the closed embedding
        \begin{align*}
            \Rb^{2r}\times\Sym^{a-1}(\Rb^{2r})&\to \Sym^{n\cdot\vec{1}}(\Rb^{2r})\times\Sym^{a-1}(\Rb^{2r})\\
            (\bar{z},D)&\mapsto(n\cdot\bar{z},\cdots,n\cdot\bar{z},D)
        \end{align*}
        where we view $\Rb^{2r}\times\Sym^{a-1}(\Rb^{2r})$ as the variety of pairs of effective 0-cycles $(\bar{z},D)$ on $\Cb^{r}$ with $\deg(\bar{z})=1$ and $\deg(D)=a-1$, and where we view
        $\Sym^{n\cdot\vec{1}}(\Rb^{2r})\times\Sym^{a-1}(\Rb^{2r})$ as the variety of $(m+1)$-tuples of effective 0-cycles
        \begin{equation*}
            (D_1,\cdots,D_m,D)
        \end{equation*}
        with $\deg(D_i)=n$ and $\deg(D)=a-1$. By inspection,
        \begin{equation*}
            \Sym^{n\cdot\vec{1}}(\Rb^{2r})\times\Sym^{a-1}(\Rb^{2r})-\Rb^{2r}\times\Sym^{a-1}(\Rb^{2r})\cong\Poly_n^{n\cdot\vec{1}}(\Rb^{2r})\times\Sym^{a-1}(\Rb^{2r})
        \end{equation*}
        and the assignments
        \begin{align*}
            (\bar{z},D)&\mapsto (\bar{z}+D)\\
            (D_1,\cdots,D_m,D)&\mapsto(D_1+nD,\cdots,D_m+nD)
        \end{align*}
        determine a map of cofiber sequences
        \begin{equation*}
            \xymatrix{
                (\Rb^{2r}\times\Sym^{a-1}(\Rb^{2r}))^+ \ar[r] \ar[d] & (\Sym^{n\cdot\vec{1}}(\Rb^{2r})\times\Sym^{a-1}(\Rb^{2r}))^+ \ar[r] \ar[d] & (\Poly_n^{n\cdot\vec{1}}(\Rb^{2r})\times\Sym^{a-1}(\Rb^{2r}))^+ \ar[d]^\cong \\
                R_{n,a}^{an\cdot\vec{1}}(\Rb^{2r})^+ \ar[r] & R_{n,a-1}^{an\cdot\vec{1}}(\Rb^{2r})^+ \ar[r] & (\Poly_n^{n\cdot\vec{1}}(\Rb^{2r})\times\Sym^{a-1}(\Rb^{2r}))^+
            }
        \end{equation*}
        The left vertical map is an $a$-fold branched cover, so on the top degree of compactly supported cohomology, the map it induces is multiplication by $a$. In particular, this gives an isomorphism in rational cohomology, and by the Five Lemma applied to the map of long exact sequences, we see that the cohomology of $R_{n,a-1}^{an\cdot\vec{1}}(\Rb^{2r})$ is as claimed.

        Finally, to see that \eqref{stabF} is an isomorphism for $\bd+1_i=an\cdot\vec{1}$ and $k=a-1$, we apply the Five Lemma to the map of long exact sequences induced by the continuous map of cofiber sequences
        \begin{equation*}
            \xymatrix{
                R_{n,a}^{an\cdot\vec{1}}(\Rb^{2r})^+ \ar[r] \ar[d] & R_{n,a-1}^{an\cdot\vec{1}}(\Rb^{2r})^+ \ar[r] \ar[d] & (\Poly_n^{n\cdot\vec{1}}(\Rb^{2r})\times\Rb^{2r(a-1)})^+ \ar[d]\\
                \ast \ar[r] & (R_{n,a-1}^{an\cdot\vec{1}-1_i}(\Rb^{2r})\times\Rb^{2r})^+ \ar[r] & (\Poly_n^{n\cdot\vec{1}-1_i}(\Rb^{2r})\times\Rb^{2r(a-1)}\times\Rb^{2r})^+
            }
        \end{equation*}
        Using that the map \eqref{stabF} is now an isomorphism for $\bd+1_i=an\cdot\vec{1}$ and $k=a-1$, the downward induction on $k$ now proceeds exactly as above, and this completes the proof of the proposition.\qed

\section{Completing the proof of Theorem \ref{theorem:coho}}
\label{section:final}

In this section we complete the proof of Theorem \ref{theorem:coho}.  We will deduce the theorem in a number of steps, using Theorem \ref{thm:EL}, Theorem \ref{theorem:leray}, and Theorem \ref{theorem:local}. By transfer,
\begin{align*}
  E_2^{p,q}(X,\bd,n)&\cong (\tilde{E}_2^{p,q}(X,D,n)\otimes\Q)^{S_D}\\
    &\cong\left( \bigoplus_{I\in\Pi^D_n} H^p(X_I;\epsilon_I(q)\otimes\Qb)\right)^{S_D} \tag{by Theorem \ref{theorem:leray}}\\
    &\cong\left( \bigoplus_{I\in\Pi^D_n} H^p(X_I;\epsilon_I(q)\otimes\Qb)^{\stab_I}\right)^{S_D}
\end{align*}
Moreover,
\begin{align*}
    H^p(X_I;\epsilon_I(q))^{\stab_I}&\cong (H^p(X_I;\epsilon_I(q)\otimes\Qb)^{S_{I_1}\times\cdots\times S_{I_k}})^{\stab_I/(S_{I_1}\times\cdots\times S_{I_k})}\intertext{and, because $S_{I_1}\times\cdots\times S_{I_k}$ acts trivially on $X_I$,}
    &\cong (H^p(X_I;(\epsilon_I(q)\otimes\Qb)^{S_{I_1}\times\cdots\times S_{I_k}}))^{\stab_I/(S_{I_1}\times\cdots\times S_{I_k})}.
\end{align*}

Our first task after this reduction of the problem is to describe the coefficients $(\epsilon_I(q)\otimes\Qb)^{S_{I_1}\times\cdots\times S_{I_k}}))$.
Before stating the next lemma we need some terminology.  Let $\dim_{\Rb}(X)=N$. Let $\Qb[j]$ denote the rank 1 graded vector space of bidegree $(0,j)$.  Note here that the external tensor product is bigraded.

\begin{lemma}[{\bf Invariants of the coefficient sheaves \boldmath$\epsilon_I(q)$}]
\label{L:SIinv}
Endow $\epsilon_I(q)$ with the bigrading $(0,q)$.
\begin{enumerate}
    \item Let $\dim_{\Rb}(X)=2r+1, r>0$. There is an $S_D$-equivariant isomorphism of bigraded sheaves on $X^D$ :
        \begin{align*}
            (\epsilon_{\hat{0}}(0)\otimes\Qb)&\cong \Qb[0]
        \end{align*}
        where $S_D$ acts trivially on $\Qb[0]$. Further, for all $I\neq\hat{0}\in\Pi^D_n$ :
        \begin{align*}
            (\epsilon_I(q)\otimes\Qb)^{S_{I_1}\times S_{I_2} \times \cdots}&\cong 0.
        \end{align*}
    \item Let $\dim_{\Rb}(X)=2r, r>0$.
        \begin{enumerate}
            \item If $I$ consists only of singletons and $q/(2r(mn-1)-1)$ blocks of size $mn$, then
                \begin{equation}\label{eq:stab6}
                    \stab_I=((\prod_{i=1}^{\ell}S_{I_i})\rtimes S_{q/(2r(mn-1)-1)})\times\prod_{i=1}^m S_{s(I,i)}
                \end{equation}
                where $s(I,i)$ denotes the number of singletons of color $i$, and there is a $\stab_I/(S_{I_1}\times S_{I_2} \times \cdots)$-equivariant isomorphism of bigraded sheaves on $X_I$ :
                \begin{align*}
                    (\epsilon_I(q)\otimes\Qb)^{S_{I_1}\times S_{I_2} \times \cdots}&\cong \Qb[q]
                \end{align*}
                where $\stab_I/(S_{I_1}\times S_{I_2} \times \cdots)\cong S_{q/(2r(mn-1)-1)}\times\prod_{i=1}^m S_{s(I,i)}$ acts on the sheaf $\Qb[q]$ via the alternating representation for $S_{q/(2r(mn-1)-1)}$ and the trivial representation for $\prod_{i=1}^m S_{s(I,i)}$.
            \item For all other $I$:
                \begin{align*}
                    (\epsilon_I(q)\otimes\Qb)^{S_{I_1}\times S_{I_2} \times \cdots}&\cong 0.
                \end{align*}
        \end{enumerate}
\end{enumerate}
\end{lemma}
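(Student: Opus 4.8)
The plan is to reduce the computation to single blocks using the ``K\"unneth'' decomposition of Proposition~\ref{proposition:kunneth}, and then to pin down the single-block answer by feeding the local computation (Theorem~\ref{theorem:local}) into the Leray identification of Theorem~\ref{theorem:leray}. Write $I_1,\dots,I_e$ for the blocks of $I$. Proposition~\ref{proposition:kunneth} gives a $\prod_j S_{I_j}$-equivariant isomorphism expressing $\epsilon_I(q)$, up to the canonical degree shift, as the external tensor product $\bigboxtimes_j\big(\widetilde{H}_*(\Delta(\overline{\Pi^{I_j}_n});\Z)\otimes\coor(X_{I_j})\big)$, on which $\prod_j S_{I_j}$ acts factorwise; hence $(\epsilon_I(q)\otimes\Q)^{\prod_j S_{I_j}}$ is the (shifted) tensor product over the blocks $B$ of $I$ of the spaces $V_B:=\big(\widetilde{H}_*(\Delta(\overline{\Pi^B_n});\Q)\otimes\coor(X_B)\big)^{S_B}$, and it remains to compute each $V_B$ together with the residual action of $\stab_I/\prod_j S_{I_j}$, which permutes blocks of equal type and singletons of each color. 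For a singleton block $B=\{a\}$, Convention~\ref{convention:empty} gives $V_{\{a\}}\cong\Q$ in degree $-2$ with trivial action; in particular $(\epsilon_{\hat{0}}(0)\otimes\Q)\cong\Q[0]$ with trivial $S_D$-action, establishing the first assertion of each of parts (1) and (2). For a non-singleton $B$, $X_B$ is the small diagonal $X\hookrightarrow X^B$, with normal bundle the reduced permutation representation of $S_B$ tensored with $TX$, so $S_B$ acts on $\coor(X_B)$ through $\sigma\mapsto\operatorname{sgn}(\sigma)^{\dim X}$: trivially when $\dim X$ is even, by the sign representation when $\dim X$ is odd. This is the source of the dichotomy between parts (1) and (2).

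The heart of the proof is the computation of $V_B$ for non-singleton $B$, and I would carry it out not combinatorially but by comparison with $H^*(\Poly^{\bd_B}_n(\Rb^N);\Q)$, where $\bd_B:=(|B(1)|,\dots,|B(m)|)\ge n\cdot\vec{1}$. For $X=\Rb^N$ each stratum $X_I\cong(\Rb^N)^e$ is contractible, so the Leray spectral sequence of Theorem~\ref{theorem:leray} is concentrated in the column $p=0$ and therefore degenerates; transfer then identifies $H^q(\Poly^{\bd_B}_n(\Rb^N);\Q)$ with $\bigoplus_{[J]}(\epsilon_J(q)\otimes\Q)^{\stab_J}$, a direct sum over $S_B$-orbits of $J\in\Pi^B_n$, in which the orbit $\{\hat{1}=B\}$ contributes exactly $V_B$, its degree-$i$ part placed in cohomological degree $q=\cd(B,X)-i-2$. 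Since $\widetilde{H}_i(\Delta(\overline{\Pi^B_n});\Q)$ vanishes outside a bounded range of $i$ (the order complex has dimension $\le|B|-mn-1$), letting $q$ range over the positive integers makes this summand sweep out all of $V_B$. In the odd case $\cd(B,X)-|B|+1=(\dim X-1)(|B|-1)>0$, so the summand lies in strictly positive degrees, where Theorem~\ref{theorem:local}(1) vanishes; hence $V_B=0$, and the K\"unneth reduction kills $(\epsilon_I(q)\otimes\Q)^{\prod_j S_{I_j}}$ for every $I\neq\hat{0}$, proving part (1). In the even case I would induct on $|B|$: the base case $|B|=mn$ is direct, since $\Pi^B_n=\{\hat{0},\hat{1}\}$, so $\Delta(\overline{\Pi^B_n})=\emptyset$ and $V_B\cong\Q$ in degree $-1$ with trivial action, which translates to $(\epsilon_B(q)\otimes\Q)^{S_B}=\Q[2r(mn-1)-1]$. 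For $|B|>mn$ I classify the $S_B$-orbits of $J\in\Pi^B_n$: $J=\hat{0}$ contributes $\Q[0]$; the single orbit of $J$'s with one non-singleton block, necessarily of size $mn$, contributes $\Q[2r(mn-1)-1]$ (base case plus trivial residual action); an orbit of $J$'s with $k\ge 2$ non-singleton blocks all of size $mn$ contributes $0$, because graded-commutativity of the intersection product on \eqref{posetalg} forces the $S_k$ permuting these odd-degree ($2r(mn-1)-1$) generators to act by the sign representation; an orbit of $J$'s with a non-singleton block of size $>mn$ contributes $0$ by the inductive hypothesis applied to that block together with the K\"unneth reduction; and the orbit $\{\hat{1}=B\}$ contributes $V_B$. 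Comparing the total against Theorem~\ref{theorem:local}(2)---which gives $H^*(\Poly^{\bd_B}_n(\Rb^{2r});\Q)$ equal to $\Q$ in degrees $0$ and $2r(mn-1)-1$ only---forces $V_B=0$, closing the induction.

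It remains to assemble part (2). If $I$ has a non-singleton block of size $>mn$, the corresponding K\"unneth factor $V_B$ vanishes, so $(\epsilon_I(q)\otimes\Q)^{\prod_j S_{I_j}}=0$. If $I$ consists of singletons and $\ell$ blocks of size exactly $mn$, the K\"unneth reduction gives a one-dimensional $(\epsilon_I(q)\otimes\Q)^{\prod_j S_{I_j}}$, nonzero precisely when the internal degrees match, i.e.\ when $\ell=q/(2r(mn-1)-1)$ (a short bookkeeping with $\cd(I,X)=2r\ell(mn-1)$), and on it $\stab_I/\prod_j S_{I_j}\cong S_\ell\times\prod_i S_{s(I,i)}$ acts by the sign representation on the $S_\ell$ factor (graded-commutativity again) and trivially on the $\prod_i S_{s(I,i)}$ factor (the singleton case above); the description \eqref{eq:stab6} of $\stab_I$ is immediate from the combinatorics. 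The step I expect to be the genuine obstacle is the vanishing $V_B=0$ for non-minimal $B$: a direct combinatorial proof---identifying the $S_B$-representation on $\widetilde{H}_*(\Delta(\overline{\Pi^B_n}))$ from the falling chains of the EL-labelling of Theorem~\ref{thm:EL}---looks unpleasant, so I would instead route it through the local computation and the induction above, the delicate points being the degree bookkeeping that guarantees the $J=\hat{1}$ summand captures all of $\widetilde{H}_*(\Delta(\overline{\Pi^B_n}))$ and the careful tracking of the sign and orientation characters.
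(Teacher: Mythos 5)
Your proof is correct. It uses the same basic ingredients the paper does --- Proposition~\ref{proposition:kunneth}, the Leray identification of Theorem~\ref{theorem:leray}, and the local computation of Theorem~\ref{theorem:local} --- but arranges them differently, and the rearrangement is worth flagging. The paper first handles $I=\hat{1}$ directly by counting falling chains via the EL-labelling of Theorem~\ref{thm:EL} and Lemma~\ref{L:BW}, showing the required chain lengths $N(|D|-1)$ and $N(|D|-mn)+1$ are only achieved for $|D|=1$, respectively $|D(i)|=n$ for all $i$, and then reduces $|I|>1$ to this via K\"unneth. You instead do the K\"unneth reduction up front, reducing everything to the single-block invariants $V_B$, and then pin down $V_B$ not by enumerating falling chains but by a degree count: the order complex of $\overline{\Pi^B_n}$ has dimension at most $|B|-mn-1$ (from the trivial bound on chain lengths), so the $J=\hat{1}$ summand of the orbit decomposition of $H^q(\Poly^{\bd_B}_n(\Rb^N);\Q)$ lives entirely in cohomological degrees $q\ge(N-1)(|B|-1)+mn-2$, which is $>0$ in the odd case and $>2r(mn-1)-1$ for $|B|>mn$ in the even case, forcing $V_B=0$ because each orbit summand is a direct summand of the local cohomology, which vanishes there. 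This is a clean substitute for the falling-chain enumeration. Two small remarks: (i) your induction on $|B|$ in the even case is not actually needed --- the degree bound already places the $\hat{1}$-orbit summand strictly above $2r(mn-1)-1$ once $|B|>mn$, so it vanishes without reference to the other orbits; (ii) the shellability theorem is still invoked implicitly, since Theorem~\ref{theorem:leray} relies on torsion-freeness of $\widetilde{H}_*(\Delta(\overline{\Pi^D_n(\le I)}))$, which the paper establishes via Theorem~\ref{thm:EL} and Lemma~\ref{L:BW}. So you have not eliminated shellability from the picture, only the explicit falling-chain count in this lemma's proof.
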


\begin{proof}[Proof of Lemma \ref{L:SIinv}]
Let $N=\dim_\Rb(X)$.   We first prove the lemma in the case where $|I|=1$, i.e. $I=\hat{1}$ is the terminal object in $\Pi^D_n$ and $\stab_I=S_D$. Note that $S_D$ acts trivially on $X_{\hat{1}}$. By the Goresky-MacPherson Formula (Theorem \ref{theorem:GM}) and the definition of $\epsilon_{K}(q)$, for all $x\in X_{\hat{1}}$, there is an $S_D$-equivariant isomorphism
\[\left(\bigoplus_{K\in\Pi_n^D}\epsilon_{K}(q)\otimes\Qb\right)_x\cong H^q(\widetilde{\Poly}_n^{D}(\Rb^{n});\Qb).\]

Recall the following three facts :
\begin{enumerate}
\item By Theorem~\ref{theorem:local}, for $r>0$ we have :
    \begin{equation}
        H^q(\Poly_n^{\bd}(\Rb^{2r});\Qb)\cong
            \left\{
                \begin{array}{ll}
                    \Qb & q=2r(mn-1)-1\\
                    \Qb & q=0\\
                    0 & \text{else}
                \end{array}\right.
    \end{equation}
   and

       \begin{equation}
        H^q(\Poly_n^{\bd}(\Rb^{2r+1});\Qb)\cong
            \left\{
                \begin{array}{ll}
                                   \Qb & q=0\\
                    0 & \text{else}
                     \end{array}\right.
                     \end{equation}
    \item Transfer and the Goresky-MacPherson formula (Theorem \ref{theorem:GM})
    gives,  for each $q\geq 0$ and all $N\geq 2$:
        \begin{align*}
            H^q(\Poly_n^{\bd}(\Rb^{N});\Qb)&\cong H^q(\widetilde{\Poly}_n^{D}(\Rb^{N});\Qb)^{S_D}\\
            &\cong \left( \bigoplus_{I\in  \Pi^{D}_n}   \tilde{H}_{\cd(I,\Rb^N)-q-2}(\Delta(\overline{\Pi_n^D(\leq I)});\Q)\otimes\coor((\Rb^N_I)\right)^{S_{D}}
        \end{align*}
        \item Theorem \ref{thm:EL} gives that $\Pi_n^D$ satisfies the hypothesis of
        Lemma~\ref{L:BW}.  This lemma then gives that $\dim  \tilde{H}_{\cd(\hat{1},\Rb^N)-q-2}(\Delta(\overline{\Pi_n^D});\Q)    $
            is given by the number of falling chains of $\Pi_n^D$ of length $\cd(\hat{1},\Rb^N)-q$.
\end{enumerate}

Recall that  $\epsilon_{\hat{1}}(q)\otimes \Qb$ is the constant sheaf $\tilde{H}_{\cd(\hat{1},\Rb^N)-q-2}(\Delta(\overline{\Pi_n^D});\Q)\otimes\coor(X_{\hat{1}})$ on $X_{\hat{1}}$.

First suppose that $N=2r+1,r>0$.  Then $S_D$ acts on the sheaves $\coor(X_I)$ by the sign representation for all $I\neq \hat{0}$. The three facts above combine to show that the $S_D$-invariants $(\epsilon_{\hat{1}}(q)\otimes\Qb)^{S_D}$ vanish unless $q=0$, and there exist falling chains $C$ in $\Pi^D_n$ with $\ell(C)=\cd(\hat{1},\Rb^{2r+1})$.  Since $\cd(\hat{1},\Rb^{2r+1})=(2r+1)(|D|-|\hat{1}|)=(2r+1)(|D|-1)$, these conditions are equivalent to:
\begin{align}
\ell(C)&=(2r+1)(|D|-1).\label{eq:lc0}
\end{align}

Now suppose $N=2r,r>0$. Then $S_D$ acts on the sheaf $\coor(X_I)$ by the trivial representation, and the orientation of $X$ induces an $S_D$-equivariant trivialization $\coor(X_I)\cong \Z$ for all $I$. The three facts above combine to show that the $S_D$-invariants $(\epsilon_{\hat{1}}(q)\otimes\Qb)^{S_D}$ vanish unless $q=2r(mn-1)-1$ or $q=0$, and there exist falling chains $C$ in $\Pi^D_n$ with $\ell(C)=\cd(\hat{1},\Rb^N)-(2r(mn-1)-1)$ or $\ell(C)=\cd(\hat{1},\Rb^N)$.
Since $\cd(\hat{1},\Rb^N)=2r(|D|-|\hat{1}|)=2r(|D|-1)$, these conditions are equivalent to:
\begin{align}
\ell(C)&=2r(|D|-mn)+1,\label{eq:lc1}\\
\text{resp.~}\ell(C)&=2r(|D|-1)\label{eq:lc2}.
\end{align}
We now claim that, for any $N\geq 2$, unless $|D(i)|=n$ for $i=1,\ldots,m$, respectively $|D|=1$, there does not exist any falling chain $C$
satisfying \eqref{eq:lc1}, respectively \eqref{eq:lc0} or \eqref{eq:lc2}. Note that we are still assuming $|I|=1$ here. To see the claim, note that if $|D(i)|\ge n$ for all $i$, the longest falling chain $C'$ must consist of one creation of a non-singleton block, followed by singleton mergers.  Since there are $|D|-mn$ singletons left after the first move, it follows that
\[\ell(C')=|D|-mn+1.\]
In particular $\ell(C')\le N(|D|-mn)+1$, with equality as in \eqref{eq:lc1} only when $|D|-mn=0$.  Further, \eqref{eq:lc0} and \eqref{eq:lc2} never occur.  When $|D(i)|=n$ for all $i$, there is one falling chain of length $1$, and we see that $S_D$ must act trivially on it when $N=2r$ since, from Fact 1 above, there is an invariant.

  Similarly, if $|D(i)|<n$ for some $i$, then there are no nontrivial colored $n$-equals partitions, i.e
\begin{equation*}
    \Pi^{D}_n=\{0\}.
\end{equation*}
Thus the unique falling chain has length 0, which is only equal to $N(|D|-1)$ when $|D|=1$.
When $|D|=1$, $\hat{1}=\hat{0}$. Therefore $\coor(X_{\hat{0}}):=H_c(X^D;\Z)\otimes H_c(X^D;\Z)^\vee=\Zb$ with the trivial $S_D$-action regardless of the dimension of $X$. Of course $S_D$ also acts trivially on the unique falling chain. This proves the lemma in the case $|I|=1$.

In the case $|I|>1$, suppose $I$ has non-singleton blocks $I_1,\ldots,I_k$ and singleton blocks $I_{k+1},\ldots,I_\ell$. The projection of $\epsilon_I(q)\otimes\Qb$ onto the
$(S_{I_1}\times S_{I_2} \times \cdots)$-invariants
can be factored as follows: compose the projections $\pi_j$ onto the invariants for the group which fixes $I_j$ setwise and $D\setminus I_j$ pointwise, for $j=1,\ldots,k$.

By Proposition~\ref{proposition:kunneth}, any class in $H_\ast(\Delta(\overline{\Pi^D_n(\le I)});\Z)\otimes \coor(X_I)$ is a product of classes coming from the partitions with only one non-singleton block $I_j$ for $j=1,\ldots,k$.  The argument above shows that the projection $\pi_j$ is 0 unless $N$ is even and $|I_j\cap D(i)|=n$ for $i=1,\ldots,m$.   In the case that $N=2r$ is even, if all $I_j$ for $j=1,\ldots, k$ have $|I_j\cap D(i)|=n$ for $i=1,\ldots,m$, we have a single dimension of $(S_{I_1}\times S_{I_2} \times \cdots)$--invariants for any $q$ divisible by $2r(mn-1)-1$ and no invariants for any other $q$.  This gives the Statement 1 and the first part of Statement 2 of the lemma.

For the second part of Statement 2, the group acting here is non-canonically isomorphic to
\[((S_{n}^{\times m})\wr S_{q/(2r(mn-1)-1)})\times \prod_{i=1}^m S_{s(I,i)}.\]

Under the isomorphism of Proposition~\ref{proposition:kunneth}, the isomorphism of Lemma~\ref{L:SIinv} takes the form
\begin{equation}\label{eq:action}
    (\epsilon_I(q)\otimes\Qb)^{S_{I_1}\times\cdots\times S_{I_\ell}}\cong \bigboxtimes_{j=1}^\ell \Qb\langle I_j\rangle
\end{equation}
where $\Qb\langle I_j\rangle$ denotes, for $|I_j|=mn$, the constant sheaf on $X_{I_j}$ with stalk the rank 1 graded vector space of bidegree $(0,2r(mn-1)-1)$ corresponding to the unique falling chain in $\Pi^{I_j}_n$ of length $1$, and where $\Qb\langle I_j\rangle$ denotes, for $|I_j|=1$, the constant sheaf on $X_{I_j}$ with stalk the rank 1 graded vector space of bidegree (0,0).  Passing to the quotient $\stab_I/(S_{I_1}\times\cdots S_{I_\ell})\cong S_{q/(2r(mn-1)-1)}\times\prod_{i=1}^m S_{s(I,i)}$, we see that the $S_{q/(2r(mn-1)-1)}$ acts on the right-hand side of \eqref{eq:action} according to the K\"unneth isomorphism and the graded rule of signs, i.e. by permuting classes of odd total degree past each other via the sign representation, while $S_{s(I,i)}$ acts by permuting classes of total degree 0 past each other, i.e. via the trivial representation.
\end{proof}

Back to the proof of Theorem \ref{theorem:coho}.   First note that Theorem~\ref{theorem:leray} gives
\begin{align*}
(\widetilde{E}_2^{p,q}(X,D,n)\otimes\Qb)^{S_D}&\cong \left( \bigoplus_{I\in\Pi_n^D}H^p(X_I;\epsilon_I(q)\otimes\Qb)\right)^{S_D}\\
&\cong \left( \bigoplus_{I\in\Pi_n^D}H^p(X_I;\epsilon_I(q)\otimes\Qb)^{S_{I_1}\times S_{I_2} \times\cdots }\right)^{S_D}\\
\end{align*}
where the second isomorphism follows from basic linear algebra.  Since $S_{I_1}\times S_{I_2} \times\cdots $ acts trivially on $X_I$,   Lemma~\ref{L:SIinv} gives that:
\begin{itemize}
\item If $\dim_\Rb(X)=2r+1,~r>0$ then  $(\epsilon_I(q)\otimes\Qb)^{S_{I_1}\times S_{I_2} \times\cdots }=0$ unless $I$ consists only of singletons and $q=0$; and
\item if $\dim_\Rb(X)=2r,~r>0$ then $(\epsilon_I(q)\otimes\Qb)^{S_{I_1}\times S_{I_2} \times\cdots }=0$ unless $I$ consists only of singletons and $q/(2r(mn-1)-1)$ blocks of size $mn$.
\end{itemize}

We conclude that if $\dim_\Rb(X)=2r+1,~r>0$ then

\begin{equation*}
(\widetilde{E}_2^{p,q}(X,D,n)\otimes\Qb)^{S_D} \cong \left\{
\begin{array}{ll}
H^p(X^D;\Q)^{S_D}&q=0\\
0&q>0
\end{array}\right.
\end{equation*}
This proves the first statement of the theorem.

For the second statement, if $\dim_\Rb(X)=2r,~r>0$ then  the above gives that

\begin{equation}
\label{equation:sfinvariance}
(\widetilde{E}_2^{p,q}(X,D,n)\otimes\Qb)^{S_D} \cong \left( \bigoplus_{\substack{I\in  \Pi^D_n\\I= \text{\rm singletons and blocks of size $mn$}\\
 \text{\rm with $q/(2r(mn-1)-1)$ blocks of size $mn$}  }}
H^p(X_I; \epsilon_I(q)\otimes\Qb ) \right)^{S_D}.
\end{equation}

We are now in a position to prove the second statement of Theorem \ref{theorem:coho}.
Let $J\in \Pi^D_n$ be a partition composed of singletons and $q/(2r(mn-1)-1)$ blocks of size $mn$.
Then the $S_D$-representation
$$
\bigoplus_{\substack{I\in  \Pi^D_n\\I \ \text{\rm singletons and blocks of size $mn$}\\
 \text{\rm with $q/(2r(mn-1)-1)$ blocks of size $mn$}  }}
H^p(X_I; \epsilon_I(q)\otimes\Qb )
$$
is the induction from $\stab_{J}$ up to $S_D$ of
$$
H^p(X_{J}; \epsilon_{J}(q)\otimes\Qb ).
$$
Thus, by Frobenius reciprocity,
$$
\left( \bigoplus_{\substack{I\in  \Pi^D_n\\I= \text{\rm singletons and blocks of size $mn$}\\
 \text{\rm with $q/(2r(mn-1)-1)$ blocks}  }}
H^p(X_I; \epsilon_I(q)\otimes\Qb ) \right)^{S_D}
=H^p(X_{J}; \epsilon_{J}(q)\otimes\Qb )^{\stab_{J}}.
$$
We have
$$
H^p(X_{J}; \epsilon_{J}(q)\otimes\Qb )^{\stab_{J}} =\left( H^p(X_{J}; \epsilon_{J}(q)\otimes\Qb )^{S_{J_1}\times S_{J_2} \times \cdots}  \right)^{\stab_{J}/(S_{J_1}\times S_{J_2} \times \cdots)}.
$$
Note that, $S_{J_1}\times S_{J_2} \times \cdots$ acts trivially on $X_J$.  By Lemma~\ref{L:SIinv}, we have that
$(\epsilon_{J}(q)\otimes\Qb)^{S_{J_1}\times S_{J_2} \times \cdots}\cong\Qb[q]$ where $\Qb[q]$ denotes the constant rank 1 graded sheaf on $X_{J}$ in bidegree $(0,q)$.  By Lemma \ref{L:SIinv}, $\stab_{J}/(S_{J_1}\times S_{J_2} \times \cdots)$ acts on $\Qb[q]$ by the sign representation for permutations of nonsingleton blocks and the trivial representation for permutations of singletons.

Let $J$ have $k$ blocks of size $mn$ and $s(J,i)$ singletons of color $i$.  Note that $k\le |D(i)|/n$ for all $i=1,\ldots,m$. Then $X_J\cong X^{k} \times \prod_{i=1} ^m X^{s(J,i)}$ and, by the definition of $\epsilon_J(q)$ and Lemma \ref{L:SIinv},
$H^p(X_{J};\epsilon_J(q)\otimes\Qb^{S_{J_1\times\cdots}} )$ is the degree $(p,q)$ part of
\begin{equation*}
    H^*(X;\Qb[2r(mn-1)-1])^{\tensor k} \tensor \bigotimes_{i=1}^m H^*(X; \Qb[0])^{\tensor s(J,i)},
\end{equation*}
where the cohomological degree contributes only to the $p$ degree, and where $\stab_{J}/(S_{J_1}\times S_{J_2} \times \cdots)\simeq S_k \times S_{\ell_1} \times S_{\ell_2} \times \cdots$ acts in the usual (graded) way from the K\"unneth formula. Thus $H^p(X_{J}; \epsilon_{J}(q)\otimes\Qb )^{\stab_{J}} $ is the degree $(p,q)$ part of
\begin{equation*}
    \Sym_{gr}^k H^*(X; \Qb[2r(mn-1)-1])\tensor  \bigotimes_{i=1}^m \Sym_{gr}^{s(J,i)} H^*(X; \Qb[0])
\end{equation*}
as claimed.

When $X$ is a smooth complex variety,  Theorem \ref{theorem:leray} applied to Equation \eqref{equation:sfinvariance} gives the weights as claimed in the third statement of the theorem.
\qed

\subsection*{Acknowledgements}
We thank A. Berglund, J. Grodal, M. Kapranov, B. Knudsen, J.P. May, J. Miller, M. Nori and C. Westerland for helpful comments and questions.  We are very appreciative to Tom Church for making extensive comments on an earlier draft of this paper, and to Dan Petersen for pointing out a crucial example which suggested a much richer picture.  Finally, we thank the anonymous referees for many comments that helped to improve the paper.

The first author was supported in part by National Science Foundation Grant Nos. DMS-1105643, 
DMS-1406209, and the Jump Trading Mathlab Research Fund. The second author was supported in part by National Science Foundation Grant No. DMS-1400349.  The third author was supported by an American Institute of Mathematics Five-Year Fellowship, a Packard Fellowship for Science and Engineering, a Sloan Research Fellowship, and National Science Foundation grants DMS-1301690 and DMS-1652116.

\small

\bigskip{\noindent
Dept. of Mathematics, University of Chicago\\
E-mail: farb@math.uchicago.edu\\
\\
Dept. of Mathematics, University of California, Irvine\\
E-mail: wolfson@uci.edu\\
\\
Dept. of Mathematics, University of Wisconsin--Madison; American Institute of Mathematics\\
E-mail: mmwood@math.wisc.edu

\end{document}